\documentclass[12pt]{article}

\usepackage[english]{babel}


\usepackage[a4paper,top=2cm,bottom=2cm,left=2cm,right=2cm,marginparwidth=1.75cm]{geometry}
\usepackage[colorlinks=true, allcolors=blue]{hyperref}
\usepackage{natbib}
\usepackage[dvips]{graphicx}
\usepackage{amsmath,amsfonts,amssymb,latexsym,epsfig}
\usepackage{mathrsfs,mdframed, float,graphicx,dsfont,wrapfig}
\usepackage{mathrsfs}
\usepackage{verbatim}
\usepackage{latexsym}
\usepackage{amsthm}
\usepackage{amssymb}
\usepackage{graphics}
\usepackage{amsbsy}
\usepackage{enumerate}
\usepackage{times}
\usepackage{mathtools}   
\usepackage{bm}
\usepackage{sidecap}
\usepackage{cleveref}
\usepackage{url}

\newtheorem{theorem}{Theorem}[section]
\newtheorem{lemma}[theorem]{Lemma}
\newtheorem{remark}[theorem]{Remark}

\newtheorem{assumption}{Assumption}

\newtheorem{proposition}{Proposition}
\newtheorem{definition}{Definition}

\usepackage{amsmath}
\usepackage{amsfonts} 
\usepackage{graphicx}
\usepackage[colorlinks=true, allcolors=blue]{hyperref}
\usepackage{authblk}

\newcommand{\R}[1]{\mathbb{R}}

\definecolor{brinkpink}{rgb}{0.98, 0.38, 0.5}
\definecolor{blue_dark}{rgb}{0.0, 0.5, 0.69}

\newcommand{\ds}[1]{\textcolor{black}{#1}}

\title{Posterior concentration in spatio-temporal \\ Hawkes processes}

\author[1]{Xenia Miscouridou} 
\author[2]{D\'eborah Sulem} 

\affil[1]{Department of Mathematics and Statistics, University of Cyprus\footnote{miscouridou.xenia@ucy.ac.cy}} 
\affil[2]{Faculty of Informatics, Universit\'a della Svizzera italiana\footnote{deborah.sulem@usi.ch}} 




\date{}

\begin{document}

\newcommand{\fa}[1]{{\color{blue} #1}}
\maketitle

\begin{abstract}
We develop a Bayesian nonparametric framework for inference in multivariate spatio-temporal Hawkes processes, extending existing theoretical results beyond the purely temporal setting. Our framework encompasses modelling both the background and triggering components of the Hawkes process through Gaussian process priors. Under appropriate smoothness and regularity assumptions on the true parameter and the nonparametric prior family, we derive posterior contraction rates for the intensity function and the parameter, in the asymptotic regime of repeatedly observed sequences. Our analysis generalizes known contraction results for purely temporal Hawkes processes to the spatio-temporal setting, which allows to jointly model excitation and clustering effects  across time and space. These results provide, to our knowledge, the first theoretical guarantees for Bayesian nonparametric methods in spatio-temporal point data.
\end{abstract}

{\it Keywords:} Bayesian nonparametrics, Asymptotics, Gaussian processes

\tableofcontents

\vfill
\section{Introduction}
Hawkes processes are point process models designed to capture sequences of events where the intensity of occurrence depends on the past history of the process. Their defining property is self-excitation: each past event increases the likelihood of future ones. Originally introduced by Alan G. Hawkes (1971) for modeling the clustering of earthquakes~\citep{hawkes1971spectra}, Hawkes processes have since been widely applied across disciplines, including seismology~\citep{ogata1988statistical}, social and information networks~\citep{crane2008robust, zhao2015seismic}, neuroscience~\citep{reynaud2014goodness, truccolo2005point}, dynamic network analysis~\citep{xu2016learning, eichler2017graphical}, criminology~\citep{mohler2011self, miscouridou2023cox}, and epidemiology~\citep{rizoiu2018sir}. More recently, Hawkes processes have also found applications in machine learning, where they are used to model temporal dependencies, perform causal discovery, and augment large language models with event-based dynamics and memory~\citep{mei2017neural, zuo2020transformer, huang2024large, hills-etal-2024-exciting}.

A Hawkes process can be viewed as a non-homogeneous cluster Poisson point process and admits a self-exciting intensity function. It can also be represented as a branching or cluster process with a latent structure. This representation is particularly useful for simulation and interpretation, as the process can be viewed as a cascade of events, where each event is either exogeneously generated or endogeneously generated by a past event (parent). Hawkes processes can be univariate or multivariate. In the latter case, each component corresponds to a distinct type of event, and the process is equivalent to a marked point processes.

Originally, a Hawkes process was defined as a univariate temporal point process \citep{hawkes1971spectra}, where each event at time $t_i$ increases the probability of future events at times $t > t_i$. This temporal process does not allow to model spatial effects in spatio-temporal event data, or, in other words, it assumes that the influence of an event is homogeneous across space, which is often unrealistic in many applications. In practice, the excitation effect caused by an event may depend on both time and spatial proximity, making a spatio-temporal formulation more appropriate. Indeed, recent studies have emphasized spatio-temporal Hawkes processes in applications such as modeling wildfires~\citep{koh2023spatiotemporal} and terrorism~\citep{jun2024flexible}. Comprehensive overviews can be found in~\cite{reinhart2018review} and more recently in~\cite{bernabeu2025spatio}. Despite these advances, there remains a significant gap between practical modeling approaches and theoretical understanding, particularly from a Bayesian perspective.

From a Bayesian viewpoint, establishing posterior contraction rates provides fundamental theoretical validation for a model’s ability to learn the true self-exciting mechanism as the amount of data increases. Existing work on posterior contraction for Hawkes processes has focused almost exclusively on temporal models.~\citet{donnet2020nonparametric} derived posterior contraction rates for multivariate linear Hawkes processes in a nonparametric setting, while \citet{sulem2024bayesian} extended the analysis to nonlinear Hawkes processes, accounting for inhibition effects. More general mathematical frameworks for posterior contraction in point processes are provided in~\citet{donnet2014posterior}, and analogous results for inhomogeneous Poisson processes and using Gaussian process priors can be found in~\citet{Kirichenko_vanzanten15} and~\citet{GiordanoKirichenkoRousseau2025}.

On the frequentist side, likelihood-based inference for Hawkes models has a long history. \citet{ogata1978asymptotic} and \citet{ozaki1979maximum} established consistency and asymptotic normality of the maximum likelihood estimator (MLE) for stationary, univariate, exponential and purely temporal Hawkes processes, while \citet{liniger2009multivariate} extended these results to the multivariate case. For the purely temporal but nonstationary Hawkes model, \cite{chen2013inference} and \cite{kwan2023alternative} study the consistency of the MLE in an asymptotic setting closely related to ours. Broader results for MLE estimation in point processes can be found in Chapter 7 of~\citet{daley2003pointprocessesII}. Recent work has also derived non-asymptotic, finite-sample concentration inequalities for least-square estimation in multivariate temporal Hawkes processes, both in parametric and nonparametric settings~\citep{clinet2017inference, hansen2015lasso, cai2022latent}.

However, none of the existing Bayesian posterior contraction results or frequentist asymptotic results address the spatio-temporal setting, to our best knowledge. Theoretical guarantees for Bayesian inference in spatio-temporal Hawkes processes remain unexplored, despite their growing empirical importance. Partial advances have been made only recently, such as the flexible spatio-temporal modeling framework in~\citet{siviero2024flexible}, but without asymptotic or contraction results. This gap motivates the present work, which provides a rigorous Bayesian nonparametric treatment of spatio-temporal, non-stationary Hawkes processes and establishes their posterior contraction properties.

To study these types of theoretical guarantees, different asymptotic setups are possible, such as repeated observations~\citep{dolmeta2025nonparametric} or infinite domain~\citep{GiordanoKirichenkoRousseau2025}, and for each of these different Bernstein-type inequalities are needed. 
\paragraph{Contribution.} 

We establish posterior contraction rates for  non-stationary and spatio-temporal Hawkes processes within a Bayesian nonparametric framework in the setting of repeated observations. The nonparametric framework permits a flexible specification of the conditional intensity of the Hawkes process, using nonparametric prior families over functions of space and time for both the background rate and the triggering kernel. Under suitable regularity assumptions on the true parameter and mild conditions on the prior family, we derive explicit rates at which the posterior distribution concentrates around the truth. Our results hold for general classes of nonparametric priors, in particular encompassing Gaussian process priors, which provide a natural and widely used choice in modern Bayesian inference for point processes \citep{Zhang_2020,lloyd2015variational, malem2022variational}. Our proofs have similar structure to those of papers of temporal Hawkes~\citep{donnet2020nonparametric,sulem2024bayesian} but the extension to space-time is non-trivial and requires new concentration inequalities. Additionally, our work differs from the majority of previous point process papers as we consider the repeated observations settings rather than an infinite domain. Our analysis therefore extends existing posterior contraction results for temporal Hawkes to the general spatio-temporal and non-stationary setting.

The rest of the paper is organized as follows. Section~\ref{sec:setup} gives the setup and introduces the multivariate spatio-temporal Hawkes process model and the Bayesian nonparametric formulation illustrated with Gaussian process priors on the background and triggering components. Section~\ref{sec:concentration} presents the main theoretical results, establishing posterior contraction rates under suitable regularity conditions on the true intensity and the prior. 
Proofs of all main results can be found in Section~\ref{sec:proofs}. 



\section{Setup and Methodology}
\label{sec:setup}
\subsection{Setup}
We assume that we have repeated observations of a point process over a bounded spatio-temporal domain $S$. For simplicity we can assume $S = [0,1] \times [0,1]^d$ (note that we can always rescale the events time to $[0,1]$)\footnote{We note that our methodology can easily be modified for general bounded domains by changing the support of the parameters.}. In many practical applications, $d=2$ (latitude and longitude). The data thus consists of $n$ i.i.d. sequences of events with spatio-temporal coordinates, i.e., sequence $i$ is a set of $m_i$ points $N^i = (t^i_j, s^i_j)_{j \leq m_i}$ with $s_j^i \in \mathbb R^d$ and
\begin{align*}
    t^i_1 < t^i_2 < \dots <  t^i_{m_i}
\end{align*}

We denote by $N = (N^i)_{i\leq n}$ these sequences and model the latter as independent realisations of the same spatio-temporal Hawkes process $N(t,s)$ defined on $S$ as follows.

\begin{definition}\label{def:st-hawkes}
    A spatio-temporal point process $N(t,s)$ defined on a  domain $S$ is a spatio-temporal Hawkes process with parameter $f = (\mu, g)$ where $\mu \geq 0$ and $g \geq 0$ are non-negative functions, respectively called the background rate and the triggering kernel, if for any $(t,s) \in S$, its conditional intensity function is
\begin{align*}
    \lambda_{t,s}(f) = \lambda_{t,s}(\mu, g) &= \mu(t,s) + \int_{[0,t) \times [0,1]^d} g(t-t',s-s')dN(t',s') \\
    &= \mu(t,s) + \sum_{(t_j, s_j) \in N, t_j < t} g(t-t_j,s - s_j),
\end{align*}

\end{definition}

 Note that a Hawkes process as defined in Definition \ref{def:st-hawkes} is non-stationary unless $\mu$ is constant in time.

 We denote by $\mathbf{P}_{f}$ the law of the Hawkes process $N(t,s)$ with parameter $f$ and $\mathbb E_f$ the corresponding expectation. For a subset $A \subset S$, we denote by $N(A)$ the number of observations on $A$. We also make a finite-range assumption on the triggering kernel $g$, namely $g(t,s) = 0$  if $t<0$ or $t>a$ or $\|s\|_\infty > b$ with $0 < a<1/2, 0<b<1/2$. This implies that we can re-write the intensity as 
\begin{align}
     \lambda_{t,s}(f)  &= \mu(t,s) + \int_{t-a}^{t-} \int_{s' \in [0,1]^d :\|s - s'\|_\infty \leq b} g(t-t',s-s')dN(t',s').
    \label{eq:cond_int}
\end{align}
We make another standard assumption that the branching ratio of $N(t,s)$ is less than 1, implying that the process is non-explosive, i.e.,
\begin{align*}
    \|g\|_1 :=  \int_0^a  \int_{[0,1]^d \cap \{s: \|s\|_\infty\leq b\} }g(t,s) dt ds < 1.
\end{align*}


Here the statistical goal is to estimate $f$ from observations $N$. We first prove an identifiability result, which validates the feasibility of this estimation problem, under a mild assumption on the background rate.

\begin{assumption}\label{ass:mu}
    The background rate $\mu$ verifies:
    \begin{enumerate}
        \item $\int_0^{1-a} \int_{[b,1-b]^d}\mu(t,s)>0$.
        \item $\mu(t,s) < +\infty, \quad \forall (t,s) \in S$.
    \end{enumerate}
\end{assumption}

Assumption \ref{ass:mu} ensures that the background rate is finite and that the probability of observing at least one event is non-null.

\begin{proposition}[Identifiability]\label{lem:identifiability}
    Let $N$ and $N'$ be two spatio-temporal Hawkes processes with respective parameters $f = (\mu, g)$ and $f' = (\mu',g')$ verifying Assumption \ref{ass:mu}. Then,
    \begin{align*}
        N \overset{d}{=} N' \iff f = f'.
    \end{align*}
\end{proposition}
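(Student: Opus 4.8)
The direction $f = f' \implies N \overset{d}{=} N'$ is immediate, since the law of a Hawkes process is determined by its conditional intensity, which depends on $f$ only. So the plan is to prove the converse: assuming $N \overset{d}{=} N'$, deduce $\mu = \mu'$ and $g = g'$. The natural strategy is to extract the two components of $f$ from distributional quantities of $N$ that can be computed in closed form, exploiting the finite-range assumption ($g$ supported on $t \in [0,a]$, $\|s\|_\infty \le b$ with $a,b < 1/2$) which creates a ``buffer'' region where no triggering can have occurred.

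\textbf{Step 1: Recover $\mu$ on a sub-region.} On the time interval $[0,a)$, no event can yet have triggered offspring within the spatial band $[b,1-b]^d$ from a background event — more precisely, for $(t,s)$ with $t < a$, any earlier event $(t',s')$ with $t' < t < a$ contributes to $\lambda_{t,s}$ only through $g(t-t',s-s')$; but actually for small $t$ the crucial point is that $\mathbb{E}_f[\lambda_{t,s}(f)]$ satisfies a renewal-type (Hawkes--Volterra) integral equation whose inhomogeneous term is $\mu$. Concretely, taking expectations in \eqref{eq:cond_int} and using that $\mathbb{E}_f[dN(t',s')] = \mathbb{E}_f[\lambda_{t',s'}(f)]\,dt'ds'$, the first-moment intensity $\rho_f(t,s) := \mathbb{E}_f[\lambda_{t,s}(f)]$ solves
\begin{align*}
  \rho_f(t,s) = \mu(t,s) + \int_{t-a}^{t^-}\!\!\int_{\|s-s'\|_\infty \le b} g(t-t',s-s')\,\rho_f(t',s')\,dt'\,ds'.
\end{align*}
Since $N \overset{d}{=} N'$, all moments agree, so $\rho_f \equiv \rho_{f'}$ (and likewise higher-order factorial moment densities agree). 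On $t \in [0,a)$ one does not immediately get $\mu = \mu'$ because the integral term is nonzero, so instead I would use the full joint law more carefully: condition on the event $\{N([0,t)\times[0,1]^d) = 0\}$, which has positive probability by Assumption~\ref{ass:mu}; on this event $\lambda_{t,s}(f) = \mu(t,s)$ exactly, and the conditional intensity of the first point is $\mu$ restricted to the relevant region. Thus the law of the first event $(t_1,s_1)$ — its location density on $\{t_1 < \text{(anything)}\}$ together with the void probabilities $\mathbf{P}_f(N(A) = 0)$ for $A \subset [0,a)\times[0,1]^d$ — determines $\mu$ on $[0,a) \times [0,1]^d$. Iterating this ``peeling'' argument forward in time (on $[a,2a)$ the already-recovered $\mu$ on $[0,a)$ plus the recovered $g$ — see Step 2 — lets one subtract the triggering contribution and isolate $\mu$ on $[a,2a)$, and so on up to $t=1$), one recovers $\mu$ on all of $S$.

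\textbf{Step 2: Recover $g$.} With $\mu$ known, compare the first-moment equation for $\rho_f$ and $\rho_{f'}$: since $\rho_f = \rho_{f'}$ and $\mu = \mu'$, subtraction gives $\int_{t-a}^{t^-}\int_{\|s-s'\|_\infty\le b}(g - g')(t-t',s-s')\rho_f(t',s')\,dt'ds' = 0$ for all $(t,s)$. Because $\rho_f$ is strictly positive on a region of positive measure (again via Assumption~\ref{ass:mu}, part 1, which forces at least some mass), and because by varying $(t,s)$ the kernel argument $(t-t', s-s')$ sweeps out the whole support $[0,a]\times[-b,b]^d$, a standard argument (e.g. using that the pair correlation / second factorial moment density of $N$ also agrees, which gives a genuine convolution identity against a positive function) yields $g = g'$ a.e. Alternatively, and perhaps more cleanly, I would use the two-point factorial moment density $\rho^{(2)}_f((t,s),(t_2,s_2))$: for $t_2 \in (t, t+a)$ with $s_2$ in the $b$-band around $s$, the ``direct triggering'' term $g(t_2 - t, s_2 - s)\rho_f(t,s)$ appears additively, and choosing $t$ in the low-triggering regime where $\rho_f$ reduces essentially to $\mu$ (known and positive on the region in Assumption~\ref{ass:mu}) lets one read off $g$ pointwise.

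\textbf{Main obstacle.} The delicate point is the forward induction in Step 1: cleanly justifying that, stratum by stratum on $[ka,(k+1)a)$, the joint law of $N$ determines the triggering contribution $\int g\,dN$ restricted to that stratum so that $\mu$ can be isolated — this requires keeping careful track of which past events can influence the current stratum (the finite range $a<1/2$ guarantees only finitely many strata and that boundary effects near $t=1$ are controlled) and invoking the positivity from Assumption~\ref{ass:mu} to ensure the relevant conditioning events and moment densities are nondegenerate. I expect this bookkeeping, rather than any single hard inequality, to be the crux; the actual extraction of $\mu$ and $g$ from moment/void-probability data is then routine.
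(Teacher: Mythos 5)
Your key conditioning idea for Step~1 --- condition on $\{N([0,t)\times[0,1]^d)=0\}$ so that the conditional intensity at $(t,s)$ collapses to $\mu(t,s)$ --- is exactly the paper's argument. However, the ``forward induction'' you flag as the main obstacle does not arise. Under Assumption~\ref{ass:mu} the void probability $\mathbf{P}_f\bigl(N([0,t]\times[0,1]^d)=0\bigr)=\exp\bigl(-\int_0^t\int_{[0,1]^d}\mu\bigr)$ is strictly positive for \emph{every} $t\in[0,1]$, not just for $t<a$, and on the void event there are simply no past events to trigger anything. So a single conditioning recovers $\mu$ on all of $S$; no stratification, bookkeeping, or use of the finite range is needed for this part.

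For Step~2 the paper again conditions rather than inverting a moment equation: it conditions on $N(\bar S)=1$ with $\bar S=[0,1-a]\times[b,1-b]^d$ (positive probability by Assumption~\ref{ass:mu}), lets $(T_1,S_1)$ be the unique event, observes that $\lambda(t,s)=\mu(t,s)+g(t-T_1,s-S_1)$ for $t\geq T_1$, and --- having already pinned down $\mu$ --- reads off $g$ pointwise by sweeping $(t,s)$; the margins $a,b<1/2$ ensure the arguments $(t-T_1,s-S_1)$ cover the full support of $g$. Your option~(a) has a genuine gap: from $(g-g')\ast\rho_f\equiv 0$ you cannot conclude $g=g'$ without additional structure, since Assumption~\ref{ass:mu} only gives $\int_{\bar S}\mu>0$ and says nothing about $\rho_f$ being bounded away from zero or having a nowhere-vanishing Fourier transform, so the convolution identity by itself is not invertible. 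Your option~(b) (second factorial moment density) is closer in spirit to the paper's conditioning, but it leans on isolating a ``direct triggering'' term in $\rho_f^{(2)}$ that is asserted rather than derived. In short: both the peeling concern and the moment-inversion machinery are unnecessary; two direct conditionings close the proof.
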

The proof of Proposition~\ref{lem:identifiability} is found in Section~\ref{sec:proofs}.

\subsection{Methodology}\label{sec:methods}

 We denote by $f_0=(\mu_0,g_0)$ the true parameter and by $\mathbf{P}_{0}$ and $\mathbb E_0$ the law of the Hawkes process $N(t,s)$ and its expectation respectively.
 
We now describe our Bayesian nonparametric estimation framework for  the true parameter $f_0$ of the spatio-temporal Hawkes process (Definition~\ref{def:st-hawkes}). Here, we focus on  prior families based on transformations of Gaussian processes (GP), though our theorerical results hold (Section~\ref{sec:concentration}) for more general families such as mixture of beta densities or histogram priors (see, e.g.,~\cite{donnet2020nonparametric} and~\cite{sulem2024bayesian}). Recall that our parameter of interest $f = (\mu, g) \in \mathcal{F}$ where
\begin{align*}
    \mathcal{F} = \{ f = (\mu, g) ; \mu : S \to \mathbb R_+, \: g:[0,a] \times [0,b]^d \to \mathbb R_+ \}.
\end{align*}
We define a prior distribution $\Pi$ on $\mathcal{F}$ which factorises over the background and triggering kernel, i.e.,
\begin{align*}
    \Pi(f) = \Pi_\mu(\mu)  \Pi_g(g), \qquad f \in \mathcal{F}.
\end{align*}
The prior distributions  $\Pi_\mu, \Pi_g$ are distributions on non-negative functions implicitly constructed via transformations of GP. Specifically, 
\begin{align*}
    &\mu = \sigma(\nu), \qquad \nu \sim GP( 0, k_\nu)  \\
    &g = \sigma(\phi), \qquad \phi \sim GP( 0, k_\phi).
\end{align*}
Above, $\nu$ and $\phi$ are latent functions and  $\sigma: \mathbb R  \rightarrow \mathbb R^+$ is a known link function, typically a strictly increasing and bijective function on a large enough interval such as the softplus or the sigmoid function. Moreover, $k_\nu$ and $k_\phi$ are covariance functions (kernels) defined on the spatio-temporal domain $S$. For simplicity and without loss of generality, we choose a zero mean function in our GP prior.

GP priors are commonly used in Bayesian nonparametric methods for point processes, e.g., for inhomogeneous Poisson processes~\citep{adams2009tractable, lloyd2015variational, kirichenko2015optimality,  palacios2013gaussian, GiordanoKirichenkoRousseau2025, ng2019estimation} as well as temporal Hawkes processes~\citep{Zhang_2020, malem2022variational}. In~\cite{miscouridou2023cox}, a GP prior with exponential link function is used for estimating the spatio-temporal background rate of a Hawkes process. In constrast, here, both the background and the triggering kernel are estimated nonparametrically using GP priors. For an introduction to GPs, see, e.g.~\cite{rasmussen2005gaussian}.

In the rest of this section, we specify possible choices for the kernel functions and inference methodology. Let $u = (t,s) \in S$. Common choices of kernels include the squared exponential (RBF) and Mat\'ern kernels defined as follows:
\begin{align}
    &k_{RBF}(u,u') = \sigma  \exp \left( - \frac{\|u-u'\|^2}{\ell^2} \right), \label{eq:rbf} \\
     &k_{Mat}(u,u') = \frac{\sigma^2}{\Gamma(\tau) 2^{\tau-1}} \left( \frac{\|u-u'\|_2}{\ell} \right)^{\tau} B_\nu \left( \frac{\|u - u'\|_2}{\ell}\right), \label{eq:matern}
\end{align}
with hyperparameters $\sigma^2, \ell, \tau > 0$, $\Gamma$ the Gamma function, and $B_\tau$ the modified Bessel function of the second kind. We note that in the limit $\tau \to \infty$, the Mat\'ern kernel is equivalent to the RBF kernel. Often, it is computationally convenient to use kernels that are separable in time and space and stationary, i.e.,
\begin{align*}
    &k_r(u,u') = k_{r,t}(|t - t'|)k_{r,s}(\|s - s'\|_2), \quad r \in \{\nu, \phi\}.
\end{align*}
Here as well, the RBF and Mat\'ern kernels are common choices for the temporal and spatial kernels $k_{r,t}$ and $k_{r,s}$. We note that a separable kernel does not imply that the latent function  $\nu$ or $\phi$ (and a-fortiori $\mu$ or $g$) are separable functions over space and time. Nonetheless, the choice of kernel and its hyperparameters determines the smoothness of the samples from the GP prior (see more on this in Section \ref{sec:concentration}).


Inference on $\mu$ and $g$ is then performed via the posterior distribution. Firstly, we define the likelihood of the set of observations $N = (N^i)_{i=1, \dots, n}$ as  (see, e.g., \cite{daley2003pointprocessesII})
\begin{align*}
    L(N | f) = \prod_{i=1}^n \prod_{j=1}^{m_i} \lambda^i_{t_j^i, s_{j}^i}(f) \exp \left \{ - \int_S \lambda^i_{t,s}(f) dt ds \right \}, \qquad f \in \mathcal{F},
\end{align*}
where, for each $i=1, \dots, n$,
\begin{align}\label{eq:lambda-i}
    \lambda^i_{t, s}(f) = \mu(t,s) + \int g(t-t',s-s')dN^i(t',s').
\end{align}
The posterior distribution is then defined as
\begin{align*}
    \Pi(B|N) = \frac{\int_{B} L(N | f)d\Pi(f)}{\int_{\mathcal{F}} L(N | f)d\Pi(f)} , \qquad B \subset \mathcal{F}.
\end{align*}

In practice, a variational approximation of the posterior may only be computed, defined, e.g., as
\begin{align}\label{eq:var-post}
    \hat Q(f) = \arg \min_{Q \in \mathcal{Q}} KL(Q|| \Pi(f|N)),
\end{align}
where $KL$ is the Kullback-Leibler divergence. Here, the minimum is taken over an approximating family of distributions $\mathcal{Q}$, for instance, Gaussian processes on $S$. This approach is used by \cite{lloyd2015variational} and \cite{Zhang_2020, zhou2020, sulem2022scalable} respectively in the context of Poisson and temporal Hawkes processes. In fact, since the posterior  is non-conjugate here, sampling from the posterior using Monte-Carlo Markov Chain techniques is notoriously intensive. 
Note that finding the minimiser in \eqref{eq:var-post} is equivalent to maximising the Evidence Lower Bound (ELBO) defined as
\begin{align*}
    ELBO(Q) =  \mathbb E_Q[\log (L(N|f)\Pi(f))] - \mathbb E_Q[\log Q(f)].
\end{align*}

\section{Posterior concentration} 
\label{sec:concentration}

\subsection{General results}

In this section we analyse the asymptotic properties of the posterior distribution 
as the number of observed sequences $n \to \infty$. Precisely, we establish general concentration rates for the posterior on the intensity function $\lambda_{t,s}(f)$ and on the parameter $f$. For this, we first define the stochastic  distance ($L_1$-distance on the intensity function) between any pair of parameters $f,f' \in \mathcal{F}$ as below
\begin{align*}
    d_S(f,f') := \frac{1}{n} \sum_{i=1}^n \int_{[0,1]} \int_{[0,1]^d} |\lambda^i_{t,s}(f) -  \lambda^i_{t,s}(f')| dt ds = \frac{1}{n} \sum_{i=1}^n \|\lambda^i(f) -  \lambda^i(f')\|_1,
\end{align*}
where $\lambda^i_{t,s}(f)$ is defined as in \eqref{eq:lambda-i} and $\lambda^i(f)$ denotes the corresponding function from $S$ to $\mathbb R^+$.

Then we define the $L_1$-distance on the parameter as
\begin{align*}
    \|f - f'\|_1 := \|\mu - \mu'\|_1 + \|g-g'\|_1, \qquad f,f' \in \mathcal{F}.
\end{align*}

Our first result is the contraction of $\Pi(f|N)$ on the true parameter $f_0$ in terms of the stochastic distance, i.e., 
\begin{align}\label{eq:conc-sd}
    \Pi( d_S(f,f_0) > M  \epsilon_n | N_n) \xrightarrow[n \to \infty]{\mathbb P_{0}} 0. 
\end{align}
where $\epsilon_n = o(1)$ is called the contraction rate and $M>0$ is an arbitrarily large constant. Before formally stating our result, we state our assumptions on $f_0$, $\bar \epsilon_n$ and the prior. \ds{To include in our theory the case of RBF kernel (see Section \ref{sec:GP}), we formulate our assumption in terms of two sequences $\epsilon_n, \bar \epsilon_n = o(1)$ such that $\epsilon_n \geq \bar \epsilon_n$.}

\begin{assumption}[Bounded parameter]\label{ass:boundedness}
Recall that $f_0 = (\mu_0, g_0)$. We assume that $\|g_0\|_1 < 1$ and there exist $\underline{\mu}, \bar \mu, \bar g > 0$ constants independent of $n$ such that for each $(t,s) \in S$,
\begin{align*}
    &\underline{\mu} \leq \mu_0(t,s) \leq  \bar \mu \\
    &0 \leq g_0(t,s) \leq \bar g.
\end{align*}
\end{assumption}

\begin{assumption}[Prior mass]\label{ass:prior-mass}
Let
\begin{align*}
    B_\infty(\bar \epsilon_n) = \{ f = (\mu, g)  ; \|\mu - \mu_0\|_\infty  + \|g - g_0\|_\infty \leq \bar \epsilon_n\}.
\end{align*}
There exists $c_1 > 0$ such that  $\Pi(B_\infty(\bar \epsilon_n)) \geq e^{- c_1 n \bar \epsilon_n^2}$. 
\end{assumption}

\begin{assumption}[Sieves]\label{ass:sieves}
Let $\Lambda_{0,2} := \mathbb E_0\left[ \int  (\lambda^1_{t,s}(f_0))^2  dt ds \right]$ and
\begin{align}\label{eq:def-kappa}
    \kappa := \frac{4 \log 2}{\underline{\mu}}\left \{ 2 + 4\left(\frac{\bar \mu}{1 - \|g_0\|_1} + \Lambda_{0,2} \right) \right \},
\end{align}
where $\underline{\mu},\bar \mu$ are defined in Assumption \ref{ass:boundedness}. There exist $\mathcal{F}_n \subset \mathcal{F}$ and $ c_2 > c_1 + \kappa $, $\zeta_0 > 0$ and $c_3 > 0$ constants independent of $n$ such that $\Pi(\mathcal{F}_n) \geq 1 - e^{-c_2 n \bar \epsilon_n^2}$ and
    \begin{align*}
        C(\zeta_0 \epsilon_n, \mathcal{F}_n, \|.\|_1) \leq e^{c_3 n \epsilon_n^2}.
    \end{align*}
where $C(\epsilon_n, \mathcal{F}_n, \|.\|_1)$ is the covering number of $\mathcal{F}_n$ with balls of radius $\epsilon_n$ in terms of $L_1$-norm.
\end{assumption}

Assumption \ref{ass:prior-mass} and \ref{ass:sieves} resemble those in \cite{donnet2020nonparametric, GiordanoKirichenkoRousseau2025, sulem2024bayesian}. In Section \ref{sec:GP}, we show that those assumptions are verified for our GP-based prior under mild conditions on the kernel function. Assumption \ref{ass:prior-mass} is a boundedness assumption on the true parameter $f_0$ which is not restrictive in practice. Similar upper bounds are commonly assumed in the literature on point processes, see e.g., \cite{GiordanoKirichenkoRousseau2025}. The lower bound ensures that the probability of an event is non-null at any point $(t,s) \in S$.

%

\begin{proposition}[Concentration in stochastic distance]\label{prop:conc-ds}
    Under Assumptions \ref{ass:boundedness}, \ref{ass:prior-mass}, and \ref{ass:sieves}, and if $n \bar \epsilon_n^2 \to \infty$ \ds{and $\epsilon_n = o((\log n)^{-2})$ }, then \eqref{eq:conc-sd} holds.
\end{proposition}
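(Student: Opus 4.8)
\emph{Strategy.} The plan is to run the classical testing-and-entropy argument for posterior contraction, in the form tailored to point-process likelihoods and a random (stochastic) loss as in \citet{donnet2020nonparametric} and \citet{sulem2024bayesian}: Assumptions \ref{ass:boundedness}--\ref{ass:sieves} are designed to deliver exactly its three ingredients --- a lower bound on the evidence, negligible prior mass off a sieve, and exponentially powerful tests for $d_S$-separation on the sieve. Writing $\ell_n(f) := \log\{L(N|f)/L(N|f_0)\} = \sum_{i=1}^n \log\{L(N^i|f)/L(N^i|f_0)\}$, we have $\Pi(d_S(f,f_0)>M\epsilon_n \mid N_n) = \mathcal N_n / \mathcal D_n$ with $\mathcal N_n = \int_{\{d_S(f,f_0)>M\epsilon_n\}} e^{\ell_n(f)}\,d\Pi(f)$ and $\mathcal D_n = \int_{\mathcal F} e^{\ell_n(f)}\,d\Pi(f)$, and it suffices to lower bound $\mathcal D_n$ and upper bound $\mathcal N_n$ on events of $\mathbf P_0$-probability tending to $1$.

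\emph{Evidence and off-sieve mass.} On $B_\infty(\bar\epsilon_n)$, boundedness of $f_0$ together with $\mu_0 \geq \underline{\mu}$ (Assumption \ref{ass:boundedness}) yields uniform two-sided control of $\log(\lambda^i_{t,s}(f)/\lambda^i_{t,s}(f_0))$ of order $\bar\epsilon_n/\underline{\mu}$, hence bounds of order $\bar\epsilon_n^2$ on the Kullback--Leibler divergence $KL(\mathbf P_0,\mathbf P_f)$ and on the associated variance functional, with constants involving $\bar\mu/(1-\|g_0\|_1)$ and $\Lambda_{0,2}$; a Bernstein-type concentration inequality for the i.i.d.\ sum $\ell_n(f)$ then gives $\inf_{f\in B_\infty(\bar\epsilon_n)} \ell_n(f) \geq -\kappa\, n\bar\epsilon_n^2$ with probability $\to 1$, where $\kappa$ is as in \eqref{eq:def-kappa}. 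Combined with Assumption \ref{ass:prior-mass}, $\mathcal D_n \geq \Pi(B_\infty(\bar\epsilon_n))\,e^{-\kappa n\bar\epsilon_n^2} \geq e^{-(c_1+\kappa)n\bar\epsilon_n^2}$ on that event. For the sieve complement, $\mathbb E_0[\int_{\mathcal F_n^c} e^{\ell_n(f)}\,d\Pi(f)] \leq \Pi(\mathcal F_n^c) \leq e^{-c_2 n\bar\epsilon_n^2}$ by Fubini, so Markov's inequality gives $\int_{\mathcal F_n^c} e^{\ell_n(f)}\,d\Pi(f) \leq e^{-(c_2-\eta)n\bar\epsilon_n^2}$ with probability $\to 1$ for small $\eta>0$; since $c_2 > c_1 + \kappa$ this is $o(\mathcal D_n)$.

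\emph{Tests on the sieve.} It remains to bound $\int_{\mathcal F_n \cap \{d_S(f,f_0)>M\epsilon_n\}} e^{\ell_n(f)}\,d\Pi(f)$. On the event $\{n^{-1}\sum_i N^i(S) \leq 2\Lambda_0\}$, $\Lambda_0 := \mathbb E_0[N^1(S)]<\infty$ (finite since $\|g_0\|_1<1$), one has $d_S(f,f') \leq (1+2\Lambda_0)\|f-f'\|_1$, so Assumption \ref{ass:sieves} provides, for each dyadic shell $\{M\epsilon_n 2^k < d_S(f,f_0) \leq M\epsilon_n 2^{k+1}\}$, $k\geq 0$, a cover of $\mathcal F_n$ by at most $e^{c_3 n\epsilon_n^2}$ $L_1$-balls of radius $\zeta_0\epsilon_n$, which for $\zeta_0$ small enough are contained (in $d_S$) in shells. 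For a ball centred at $f_1$, let $\phi = \mathbf 1\{T_n(f_1) > \tfrac14 M\epsilon_n 2^k\}$ with $T_n(f_1) := n^{-1}\sum_i \int_S \mathrm{sgn}(\lambda^i_{t,s}(f_1)-\lambda^i_{t,s}(f_0))(dN^i(t,s)-\lambda^i_{t,s}(f_0)\,dt\,ds)$; using $\mathbb E_0[\int h\,dN^i]=\mathbb E_0[\int h\,\lambda^i(f_0)\,dt\,ds]$ (so $T_n(f_1)$ is centred under $\mathbf P_0$) and a Bernstein inequality for its martingale part, $\mathbb E_0[\phi] \leq e^{-C n M^2\epsilon_n^2 4^k}$, while for $f$ in the ball the triangle inequality for $d_S$ and $\|\lambda^i(f)-\lambda^i(f_1)\|_1 \leq (1+2\Lambda_0)\zeta_0\epsilon_n$ give $\mathbb E_f[1-\phi] \leq e^{-C n M^2\epsilon_n^2 4^k}$, with $C=C(M)$ arbitrarily large for $M$ large. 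A union bound over $k$ and over balls yields a global test $\phi_n$ with $\mathbb E_0[\phi_n] \leq \sum_k e^{c_3 n\epsilon_n^2 - C n M^2\epsilon_n^2 4^k} \to 0$, and, by Fubini and Markov, $\int_{\mathcal F_n\cap\{d_S>M\epsilon_n\}} e^{\ell_n(f)}(1-\phi_n)\,d\Pi(f) \leq e^{-(C'-\eta)nM^2\epsilon_n^2}$ with probability $\to 1$, with $C'>c_1+\kappa$ for $M$ large.

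\emph{Conclusion and main obstacle.} On the intersection of the preceding probability-$\to 1$ events and of $\{\phi_n=0\}$, $\mathcal N_n \leq \int_{\mathcal F_n^c} e^{\ell_n}\,d\Pi + \int_{\mathcal F_n\cap\{d_S>M\epsilon_n\}} e^{\ell_n}(1-\phi_n)\,d\Pi \leq e^{-(c_2-\eta)n\bar\epsilon_n^2} + e^{-(C'-\eta)nM^2\epsilon_n^2}$, so $\mathcal N_n/\mathcal D_n \leq e^{-(c_2-c_1-\kappa-\eta)n\bar\epsilon_n^2} + e^{-(C'-c_1-\kappa-\eta)nM^2\epsilon_n^2} \to 0$ since $n\bar\epsilon_n^2\to\infty$ and $\epsilon_n\geq\bar\epsilon_n$, while $\mathbf P_0(\phi_n=1)\to 0$; this gives \eqref{eq:conc-sd}. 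The main obstacle is the Bernstein-type concentration inequality for spatio-temporal Hawkes functionals used in every step above --- exponential moment control of $\ell_n$, of $\int_S(\lambda^i(f_0))^k\,dt\,ds$, and of the stochastic integrals $T_n(f_1)$ --- which must be obtained through the cluster/branching representation of the process under the subcriticality condition $\|g_0\|_1<1$; the extra hypothesis $\epsilon_n=o((\log n)^{-2})$ enters precisely here, to absorb the $\log n$ factors stemming from the polynomial tail of $N^i(S)$ into $n\epsilon_n^2$, and the random nature of $d_S$ is what forces the shells and tests to be built from the compensated counting measure rather than from a deterministic metric.
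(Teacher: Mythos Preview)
Your approach is essentially the paper's: the same testing-and-entropy decomposition, a KL-plus-variance bound on $B_\infty(\bar\epsilon_n)$ for the evidence (the paper uses Chebyshev rather than Bernstein here), Fubini for the sieve complement, and tests built from compensated counts on $S_{1,i}=\{\lambda^i(f_1)\geq\lambda^i(f_0)\}$ --- the paper takes the max of the two one-sided tests on $S_{1,i}$ and $S_{1,i}^c$, which is your $T_n(f_1)$ split into its positive and negative parts. One caveat: your claim of uniform control of $\log(\lambda^i_{t,s}(f)/\lambda^i_{t,s}(f_0))$ of order $\bar\epsilon_n/\underline\mu$ is not correct as stated, since $|\lambda^i_{t,s}(f)-\lambda^i_{t,s}(f_0)|\leq\bar\epsilon_n(1+N^i(S))$ and $N^i(S)$ is unbounded; the paper restricts to a high-probability event $\Omega_n=\{\sup_i N^i(S)\lesssim\log n\}\cap\{n^{-1}\sum_i N^i(S)\approx\mathbb E_0[N^1(S)]\}$, and the resulting $\log n$ factor in the ratio control is precisely what forces $\epsilon_n=o((\log n)^{-2})$ in the variance bound --- as you correctly recover in your final paragraph.
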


While posterior concentration in stochastic distance gives prediction guarantees, it is a non-explicit distance on the parameter space. Therefore, to obtain guarantees on parameter interpretation (e.g., how much the endogeous/exogeneous effects are in the event generating process), we establish a second result which is the posterior concentration rate in terms of the $L_1$-distance on $\mathcal{F}$.

\begin{proposition}[Concentration in $L_1$-distance]\label{prop:conc-l1}
    Under  Assumptions \ref{ass:boundedness}, \ref{ass:prior-mass}, \ref{ass:sieves},   and if $n \bar \epsilon_n^2 \to \infty$ \ds{and $\epsilon_n = o((\log n)^{-2})$ }, then 
    \begin{align}
    \label{eq:conc-L1}
    \Pi( \|f - f_0\|_1 > M' \epsilon_n | N_n) \xrightarrow[n \to \infty]{\mathbb P_{0}} 0, 
\end{align}
with $M'>0$ an arbitrarily large constant.
\end{proposition}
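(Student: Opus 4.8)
The plan is to deduce concentration in the $L_1$-distance on $\mathcal{F}$ from the already-established concentration in stochastic distance $d_S$ (Proposition~\ref{prop:conc-ds}). The key observation is that $d_S$ and $\|\cdot\|_1$ are not equivalent pointwise, but a suitable lower bound of the form $d_S(f,f_0) \gtrsim \|f-f_0\|_1$ holds on an event of high $\mathbf P_0$-probability, at least after restricting to a sieve $\mathcal{F}_n$ where the functions are controlled. Concretely, writing $\lambda^i_{t,s}(f) - \lambda^i_{t,s}(f_0) = (\mu-\mu_0)(t,s) + \int (g-g_0)(t-t',s-s')\,dN^i(t',s')$, one sees that $\|\mu-\mu_0\|_1$ appears directly, while the triggering part contributes an integral against the random counting measure whose expectation, by the branching/cluster structure and the stationarity of the Campbell measure in the relevant region, is comparable to $\|g-g_0\|_1$ times the expected number of events. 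The first step is therefore to prove a deterministic/probabilistic inequality of the shape
\begin{align*}
  d_S(f,f_0) \;\geq\; c\,\|f-f_0\|_1 \;-\; \text{(negligible term)}
\end{align*}
valid uniformly over $f \in \mathcal{F}_n$ with $\mathbf P_0$-probability tending to one.

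First I would isolate the two components. For the background part, the contribution of $|(\mu-\mu_0)(t,s)|$ to $\lambda^i(f)-\lambda^i(f_0)$ integrated over a subregion of $S$ that receives no triggering interference (e.g.\ the ``edge'' region $t < a$ and/or $\|s-s'\|_\infty$ away from observed points, exploiting the finite-range assumption $g(t,s)=0$ for $t>a$ or $\|s\|_\infty>b$) is exactly $\|\mu-\mu_0\|_1$ on that region, with no randomness. To recover the full $\|\mu-\mu_0\|_1$ one uses that the intensity is bounded below by $\underline\mu$ and invokes a counting argument showing the random counting measure $N^i$ cannot concentrate so as to cancel the $\mu$-discrepancy everywhere; alternatively one handles $\mu$ and $g$ jointly by a change of the order of integration. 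For the triggering part, I would use $\int_S \int (g-g_0)(t-t',s-s')\,dN^i(t',s')\,dt\,ds = \sum_{(t_j,s_j)\in N^i} \int_{[0,1]\times[0,1]^d} (g-g_0)(t-t_j,s-s_j)\,dt\,ds \approx m_i \|g-g_0\|_1$ up to boundary corrections that are $O(a+b)$-small relative to the domain, so that averaging over $i$ and applying a law-of-large-numbers/Bernstein bound for $\frac{1}{n}\sum_i m_i \to \mathbb E_0[m_1] > 0$ (finite and positive by Assumption~\ref{ass:mu} and the branching-ratio condition) gives $d_S(f,f_0) \gtrsim \|g-g_0\|_1$ on a high-probability event. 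The subtlety is sign cancellation between the $\mu$-discrepancy and the $g$-discrepancy inside the absolute value; the standard device (used in \cite{donnet2020nonparametric,sulem2024bayesian}) is to split $S$ into a region where only $\mu$ acts and a region where the triggering integral dominates, or to note that because $g-g_0$ is supported on $[0,a]\times[0,b]^d$ with $a,b<1/2$, integrating $|\lambda^i(f)-\lambda^i(f_0)|$ over $t \in [0,a]$ captures $\mu$ cleanly and over the bulk captures $g$.

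Second, having the inequality $d_S(f,f_0) \geq c\,\|f-f_0\|_1 - r_n$ with $r_n = o(\epsilon_n)$ on an event $A_n$ with $\mathbf P_0(A_n) \to 1$, and having from Proposition~\ref{prop:conc-ds} that $\Pi(d_S(f,f_0) > M\epsilon_n \mid N_n) \to 0$ in $\mathbf P_0$-probability, I would write
\begin{align*}
  \Pi(\|f-f_0\|_1 > M'\epsilon_n \mid N_n)
  \leq \Pi(f \notin \mathcal{F}_n \mid N_n) + \Pi\bigl(\{f \in \mathcal{F}_n\} \cap \{\|f-f_0\|_1 > M'\epsilon_n\} \mid N_n\bigr),
\end{align*}
bound the first term using the sieve mass from Assumption~\ref{ass:sieves} together with the standard evidence lower bound (the denominator of the posterior is $\geq e^{-(c_1+\kappa)n\bar\epsilon_n^2}$ with probability tending to one, as in the proof of Proposition~\ref{prop:conc-ds}), and bound the second term on $A_n$ by $\Pi(d_S(f,f_0) > (cM' - r_n/\epsilon_n)\epsilon_n \mid N_n) \leq \Pi(d_S(f,f_0) > M\epsilon_n \mid N_n)$ for $M'$ large enough that $cM' - o(1) \geq M$. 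Taking expectations and using $\mathbf P_0(A_n^c) \to 0$ concludes.

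The main obstacle I anticipate is making the lower bound $d_S \gtrsim \|\cdot\|_1$ fully rigorous uniformly over the sieve, in particular controlling the sign cancellation and the boundary effects while only assuming $\|g-g_0\|_\infty$ is bounded on $\mathcal{F}_n$ (not that $g-g_0$ is small): the region-splitting argument must be arranged so that the ``clean'' regions for $\mu$ and for $g$ have volume bounded away from zero, and the Bernstein-type deviation bound for $\frac1n\sum_i(\text{functional of }N^i)$ must be the new spatio-temporal concentration inequality alluded to in the introduction rather than the purely temporal one. Everything else — the posterior decomposition, the sieve and prior-mass bookkeeping — is routine and parallels the temporal case in \cite{donnet2020nonparametric,sulem2024bayesian}.
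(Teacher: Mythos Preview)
There is a genuine gap. Your plan is to establish a uniform-in-$f$ lower bound $d_S(f,f_0)\ge c\|f-f_0\|_1$ on a single $\mathbf P_0$-high-probability event, but the two mechanisms you propose both fail. First, the ``edge region $t\in[0,a]$'' does \emph{not} capture $\mu-\mu_0$ cleanly: the support condition on $g$ says each event's influence lasts at most $a$ time units, not that triggering is absent before time $a$; events at times $t'\in[0,t)$ contribute to $\lambda^i_{t,s}$ for every $t>0$. Second, your triggering computation $\int_S\int(g-g_0)\,dN^i\approx m_i\|g-g_0\|_1$ silently drops the absolute value: the left side equals $m_i\int(g-g_0)$ up to boundary terms, which can vanish even when $\|g-g_0\|_1>0$ since $g-g_0$ changes sign. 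The sign-cancellation problem you flag is therefore not resolved by either device, and the ``standard device'' you attribute to \cite{donnet2020nonparametric,sulem2024bayesian} is not a split of the domain $S$.

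The paper takes a structurally different route. After the usual posterior decomposition and change of measure it must bound $\mathbb E_f[\mathds 1_{\Omega_n}\mathds 1_{\{d_S(f,f_0)\le M\epsilon_n\}}]$ for each fixed $f$ with $\|f-f_0\|_1>M'\epsilon_n$ --- an expectation under $\mathbb P_f$, not $\mathbb P_0$. It sets $Z_i=\int_0^{t_2^i}\int_{[0,1]^d}|\lambda^i(f)-\lambda^i(f_0)|$, shows $Z_i\le\|f-f_0\|_1$ deterministically, and proves (Lemma~\ref{lem:ef}) that $\mathbb E_f[Z_1]\ge p_0\|f-f_0\|_1$ by conditioning on the \emph{sample-path} events $\Omega_0=\{N^1(S)=0\}$ and $\Omega_\tau=\{$exactly one point in $[\tau,\tau+a]\times[0,1]^d$, none in $[0,\tau]$ or $[\tau+a,\tau+2a]\}$: on $\Omega_0$ the intensity difference is exactly $\mu-\mu_0$, while on $\Omega_\tau$ the single well-separated point exposes $g-g_0$. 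A case split on whether $\|\mu-\mu_0\|_1$ dominates $\|g-g_0\|_1$ then yields the lower bound, and Bernstein's inequality for the i.i.d.\ $Z_i$'s gives exponential decay. The separation of $\mu$ and $g$ is achieved by conditioning on configurations of the whole sequence, not by carving up $S$.
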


The proofs of Propositions \ref{prop:conc-ds} and \ref{prop:conc-l1} are reported in Section \ref{sec:proofs}. In the next section, we show an application of our result to our GP-prior (see Section \ref{sec:GP}) and to H\"older classes of functions.


\subsection{Application to Gaussian process priors and H\"older smooth functions}\label{sec:GP}

Recall our GP-based prior construction from Section \ref{sec:methods} with latent functions $\nu,\phi$, link function $\sigma$ and covariance functions (kernels) $k_\nu, k_\phi$. We demonstrate that under mild assumptions on $\sigma$ and the kernels that Assumptions \ref{ass:prior-mass} and \ref{ass:sieves} are verified, and the concentration rate $\epsilon_n$ is explicit in the smoothness of the true parameter $f_0$.  Due to their popularity in practical applications, we focus in this section on the M\'atern and the squared exponential kernels (defined in \eqref{eq:rbf}, \eqref{eq:matern}).

Before stating our assumptions, we introduce some notation. For $\alpha > 0$, let $C^\alpha(S)$ be the space of H\"older $\alpha$-smooth functions, i.e., functions which are $\lfloor \alpha \rfloor$-times differentiable and which $\lfloor \alpha \rfloor$-th derivative is $(\alpha - \lfloor \alpha \rfloor)$-continuous, i.e., for $f \in C^\alpha(S)$,
\begin{align*}
   | f^{\lfloor \alpha \rfloor}(x) - f^{\lfloor \alpha \rfloor}(y)| \leq |x-y|^{\alpha - \lfloor \alpha \rfloor}.
\end{align*}
For $\alpha \in \mathbb N$ we denote by $\mathcal{S}^\alpha(S)$ the Sobolev space of order $\alpha$, i.e., functions which (weak) derivatives $D^\gamma f$ are squared integrable for any $\|\gamma\|_1 \leq \alpha$. Sobolev spaces of order $\alpha > 0$ can also be defined via the Fourier transform, see e.g., Definition C.6 in \cite{ghosal2017fundamentals}.

Our first two assumptions are mild regularity and smoothness conditions on $\sigma$ and $(\mu_0, g_0)$.

\begin{assumption}\label{ass:link}
    The link function $\sigma: \mathbb R \to \mathbb R^+$ is infinitely smooth, strictly increasing and $L$-Lipschitz with $L>0$. Moreover, it is bijective from $\mathbb R$ to $(0,C)$ with $C> \bar \mu \vee \bar g$ and $\bar \mu, \bar g$ defined in Assumption \ref{ass:boundedness}.
\end{assumption}

\begin{assumption}\label{ass-f0}
    The functions $\mu_0$ and $g_0$ are H\"older $\alpha$-smooth with $\alpha > 0$, i.e., $\mu_0, g_0 \in C^\alpha(S)$. 
\end{assumption}

\begin{remark}
    \ds{The softplus and the scaled sigmoid function $\sigma(t) = \sigma^* (1 + e^{-\alpha^* t})^{-1} $ with $\alpha^*  > 0$ and $\sigma^* > \bar \mu \vee \bar g$ verify Assumption \ref{ass:link}. In fact, the commonly-used exponential function could be also be employed in our framework since it can easily be proven that our results still hold if we relax the Lipschitz assumption to a \emph{locally}-Lipschitz constraint as in \cite{dolmeta2025gaussian}, under the setting of bounded parameter (Assumption \ref{ass:boundedness}).}  
\end{remark}


For GP priors, the contraction rate depends on the smoothness of the process's sample paths and the Reproducing Kernel Hilbert Space (RKHS) associated to the kernel function (see, e.g., Chapter 11 of \cite{ghosal2017fundamentals} for more details).

For the Mat\'ern kernel on $S$ (with dimension $d+1$) with parameter $\tau > \frac{d+1}{2}$ , the sample paths are H\"older $\gamma$-smooth with $\gamma < \tau - \frac{d+1}{2}$ and the corresponding RKHS is $\mathcal{S}^{\tau + \frac{d}{2}}$. We prove in the following proposition that under the latter kernel, the posterior distribution concentrates at the rate $n^{-\frac{\tau}{2\tau + d +1}}$ if $\tau \leq \alpha$ and it corresponds to the optimal rate if $\tau = \alpha$.

\begin{proposition}[Mat\'ern covariance kernel]\label{prop:GP1}
    Under  Assumptions \ref{ass:boundedness}, \ref{ass:link}, \ref{ass-f0} and the GP-based prior with Mat\'ern kernel with parameter $\tau < \alpha$, then Assumptions \ref{ass:prior-mass} and \ref{ass:sieves} are verified  with $\epsilon_n = \bar \epsilon_n \asymp n^{-\frac{\tau}{2\tau+d+1}}$ and \eqref{eq:conc-sd} and \eqref{eq:conc-L1} hold.
\end{proposition}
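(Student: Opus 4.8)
The plan is to verify that the three general Assumptions \ref{ass:boundedness}, \ref{ass:prior-mass}, \ref{ass:sieves} hold for the GP-based prior with Matérn kernel of smoothness $\tau$, and then invoke Propositions \ref{prop:conc-ds} and \ref{prop:conc-l1} directly. Assumption \ref{ass:boundedness} is part of the hypotheses. The work is therefore concentrated on the prior-mass condition (Assumption \ref{ass:prior-mass}) and the sieve condition (Assumption \ref{ass:sieves}), and on choosing the rate $\epsilon_n = \bar\epsilon_n \asymp n^{-\tau/(2\tau+d+1)}$ consistently with the constraint $n\bar\epsilon_n^2 \to \infty$ and $\epsilon_n = o((\log n)^{-2})$.

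First I would establish the small-ball / prior-mass bound. Since $\mu_0 = \sigma^{-1}$-pullback is not directly available, I instead note that by Assumption \ref{ass:link} $\sigma$ is $L$-Lipschitz and a smooth bijection onto $(0,C)$ with $C > \bar\mu\vee\bar g$; writing $\nu_0 = \sigma^{-1}(\mu_0)$ and $\phi_0 = \sigma^{-1}(g_0)$, these are well-defined and, since composition with a fixed smooth function preserves Hölder smoothness (on the relevant bounded range), $\nu_0,\phi_0 \in C^\alpha(S)$. A supremum-norm ball $\{\|\mu-\mu_0\|_\infty + \|g-g_0\|_\infty \le \bar\epsilon_n\}$ then contains the event $\{\|\nu-\nu_0\|_\infty \le \bar\epsilon_n/(2L'), \|\phi-\phi_0\|_\infty\le \bar\epsilon_n/(2L')\}$ for a suitable constant (using local Lipschitzness of $\sigma$), so by independence of the two GP priors it suffices to lower-bound each GP small-ball probability around an element of its RKHS-closure. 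Here I would cite the standard concentration-function machinery for GPs (Kakade–van der Vaart–van Zanten / Chapter 11 of \cite{ghosal2017fundamentals}): for the Matérn-$\tau$ kernel on a $(d+1)$-dimensional domain, the concentration function at an $\alpha$-smooth truth with $\tau \le \alpha$ satisfies $\varphi_{\nu_0}(\bar\epsilon_n) \lesssim \bar\epsilon_n^{-(d+1)/\tau}$, and solving $\varphi_{\nu_0}(\bar\epsilon_n) \le n\bar\epsilon_n^2$ gives exactly $\bar\epsilon_n \asymp n^{-\tau/(2\tau+d+1)}$. This yields $\Pi_\mu(\cdot) \ge e^{-c n\bar\epsilon_n^2}$ and likewise for $\Pi_g$, hence Assumption \ref{ass:prior-mass}. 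The one subtlety is that the truth need not lie in the RKHS itself, only in its $C^\alpha$-closure, which is handled by the approximation step in the concentration-function bound.

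Next I would construct the sieves $\mathcal{F}_n$. The natural choice is a Borell–Sudakov–Tsirelson–type sieve: $\mathcal{F}_n = \{(\sigma(\nu),\sigma(\phi)) : \nu \in M_n \mathbb{H}_1^\nu + \epsilon_n \mathbb{B}_1^\nu,\ \phi \in M_n \mathbb{H}_1^\phi + \epsilon_n \mathbb{B}_1^\phi\}$ where $\mathbb{H}_1$ is the unit RKHS ball, $\mathbb{B}_1$ the unit sup-norm ball, and $M_n$ chosen so that the Gaussian isoperimetric inequality gives $\Pi(\mathcal{F}_n^c) \le e^{-c_2 n\bar\epsilon_n^2}$ with $c_2 > c_1 + \kappa$ — this just requires taking $M_n \asymp \sqrt{n}\bar\epsilon_n$ with a large enough multiplicative constant, exactly as in \cite{donnet2020nonparametric} and \cite{GiordanoKirichenkoRousseau2025}. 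The metric-entropy bound follows from the known entropy of $M_n\mathbb{H}_1$ in sup-norm for Sobolev-type RKHS (the Matérn RKHS is $\mathcal{S}^{\tau+d/2}$ on a $(d+1)$-dimensional domain), which gives $\log C(\epsilon_n, M_n\mathbb{H}_1, \|\cdot\|_\infty) \lesssim (M_n/\epsilon_n)^{2(d+1)/(2\tau+d)} \cdot (\text{poly-log})$; since $\sigma$ is Lipschitz this transfers to an entropy bound on $\mathcal{F}_n$ in $\|\cdot\|_1$ (which is dominated by $\|\cdot\|_\infty$ on the bounded domain $S$), and one checks it is $\le c_3 n\epsilon_n^2$ for the chosen $\epsilon_n$. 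The $(\log n)^{-2}$ side-condition on $\epsilon_n$ is compatible since $n^{-\tau/(2\tau+d+1)}$ decays polynomially.

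The main obstacle is bookkeeping rather than conceptual: one must verify that the same rate $\epsilon_n \asymp n^{-\tau/(2\tau+d+1)}$ simultaneously satisfies the prior-mass exponent in Assumption \ref{ass:prior-mass}, the sieve-complement exponent with the strict inequality $c_2 > c_1 + \kappa$ in Assumption \ref{ass:sieves}, and the entropy bound — this forces a careful tracking of constants ($M_n$ must be large enough for isoperimetry but not so large that entropy explodes), and it is where the hypothesis $\tau < \alpha$ (so that the $C^\alpha$ truth is genuinely approximable at the Matérn rate, with no loss) is used. A secondary point requiring care is that the link function $\sigma$ maps into $(0,C)$ rather than all of $\mathbb{R}^+$, so one must confirm that $C > \bar\mu \vee \bar g$ (Assumption \ref{ass:link}) indeed guarantees $\sigma^{-1}(\mu_0)$ and $\sigma^{-1}(g_0)$ are well-defined and finite, and that the lower bound $\underline\mu > 0$ from Assumption \ref{ass:boundedness} is preserved through the pushforward — both are immediate from strict monotonicity and continuity of $\sigma$. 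Once Assumptions \ref{ass:prior-mass} and \ref{ass:sieves} are in hand, \eqref{eq:conc-sd} and \eqref{eq:conc-L1} follow verbatim from Propositions \ref{prop:conc-ds} and \ref{prop:conc-l1}.
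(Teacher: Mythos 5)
Your proposal follows the same overall route as the paper: reduce the prior-mass condition to GP small-ball estimates at the pullbacks $\sigma^{-1}(\mu_0)$ and $\sigma^{-1}(g_0)$, use the concentration-function / RKHS-approximation machinery for the Mat\'ern kernel to get the rate $n^{-\tau/(2\tau+d+1)}$, build Borell--Sudakov--Tsirelson sieves of the form $M_1\epsilon_n\mathbb B_1 + M_2\sqrt{n}\epsilon_n\mathbb H_1$, push them forward through the Lipschitz link $\sigma$, and invoke the two general concentration propositions. The paper does this by directly citing Lemmas B.1 and B.2 of the Giordano--Kirichenko--Rousseau reference for the small-ball and sieve bounds, whereas you sketch the same estimates via the concentration function; that difference is cosmetic.

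There is, however, one genuine gap. You write that $\phi_0 := \sigma^{-1}(g_0)$ is ``well-defined and finite'' and that this is ``immediate from strict monotonicity and continuity of $\sigma$.'' This is false: Assumption \ref{ass:boundedness} only gives $0 \le g_0 \le \bar g$, with no positive lower bound, and Assumption \ref{ass:link} makes $\sigma$ a bijection from $\mathbb R$ onto the \emph{open} interval $(0,C)$. At any point where $g_0(t,s)=0$, the pullback $\sigma^{-1}(g_0(t,s))$ does not exist (formally it would be $-\infty$), so $\phi_0$ is not a well-defined element of $C^\alpha$, and the GP small-ball argument cannot be applied to it directly. The paper handles this by introducing a truncated target $g_{0,n} := \max(g_0,\,\epsilon_n/4)$, which is strictly positive, satisfies $\|g_0 - g_{0,n}\|_\infty \le \epsilon_n/4$ (so a sup-ball of radius $\epsilon_n/4$ around $g_{0,n}$ sits inside the $\epsilon_n/2$-ball around $g_0$), and then works with $\phi_{0,n} := \sigma^{-1}(g_{0,n})$, which is finite everywhere. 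Your argument needs this truncation; as written, the step ``lower-bound the GP small-ball probability around $\phi_0$'' is vacuous on the part of the parameter space where $g_0$ vanishes. Note that the corresponding step for $\mu_0$ is fine because Assumption \ref{ass:boundedness} gives $\mu_0 \ge \underline{\mu} > 0$, so $\sigma^{-1}(\mu_0)$ is well-defined without truncation; the asymmetry between the two components is exactly where your ``both are immediate'' claim breaks.
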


\begin{remark}
    \ds{Note that the above result is non-adaptive. Obtaining adaptive results for a GP prior with the Mat\'ern kernel is particularly difficult.}
\end{remark}

\ds{In contrast, the squared-exponential covariance kernel together with an Inverse Gamma-hyperprior on the length-scale $\ell^{d+1}$ achieves adaptive and optimal estimation of the functional parameter, up to log-factors.} We note that for the squared-exponential kernel, the sample paths are analytical functions for any length scale $\ell$ and the RKHS has a more complex definition (see Lemma 11.35 in \cite{ghosal2017fundamentals}). 



\begin{proposition}[Squared-exponential covariance kernel] \label{prop:GP2}
    Under  Assumptions \ref{ass:boundedness}, \ref{ass:link}, \ref{ass-f0} and the GP-based prior with squared-exponential kernel where the length-scale parameter is \emph{a-priori} distributed according to a Inverse-Gamma distribution, i.e.,
    \begin{align*}
        \ell^{d+1} \sim IG(a_0,b_0)
    \end{align*}
    with $a_0,b_0>0$, then Assumptions \ref{ass:prior-mass} and \ref{ass:sieves} are verified  with
    \begin{align*}
        &\bar \epsilon_n \asymp (\log n)^{\frac{d+1}{2+ (d+1)/\alpha}} n^{-\frac{\alpha}{2\alpha + d +1}}, \qquad \epsilon_n \asymp (\log n)^{\frac{d+1}{2}} \bar \epsilon_n 
    \end{align*}
    and \eqref{eq:conc-sd} and \eqref{eq:conc-L1} hold.

\end{proposition}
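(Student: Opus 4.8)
The plan is to verify Assumptions~\ref{ass:prior-mass} and \ref{ass:sieves} for the hierarchical GP prior with a squared-exponential kernel and Inverse-Gamma length-scale, then invoke Propositions~\ref{prop:conc-ds} and \ref{prop:conc-l1} to conclude. The workhorse is the standard theory of rescaled squared-exponential GPs with random length-scale (Chapter~11 of~\cite{ghosal2017fundamentals}, building on van der Vaart--van Zanten), which gives concentration function bounds for the latent processes $\nu,\phi$ on $S$, a domain of dimension $d+1$. First I would reduce the problem on $f = (\mu,g)$ to separate problems on $\nu$ and $\phi$: since the prior factorizes and $\sigma$ is $L$-Lipschitz (Assumption~\ref{ass:link}), an $L^\infty$-ball of radius $\bar\epsilon_n/(2L)$ around $\nu_0 := \sigma^{-1}(\mu_0)$ in latent space maps into a ball of radius $\bar\epsilon_n/2$ around $\mu_0$ in $\mu$-space, and likewise for $g$; here $\sigma^{-1}(\mu_0)$ and $\sigma^{-1}(g_0)$ are well-defined and inherit H\"older $\alpha$-smoothness because $\mu_0,g_0$ take values in a compact subinterval of $(0,C)$ on which $\sigma^{-1}$ is smooth, and $\sigma^{-1}$ composed with an $\alpha$-H\"older function is $\alpha$-H\"older (chain rule / Fa\`a di Bruno on a compact range). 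So the latent targets lie in $C^\alpha(S)$.

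Next I would quote the concentration-function estimate: for the SE kernel with $\ell^{d+1}\sim IG(a_0,b_0)$ and a target in $C^\alpha([0,1]^{d+1})$, Theorem~11.55 (and its proof, via the decentering/small-ball decomposition) in~\cite{ghosal2017fundamentals} yields, for a suitable sequence
\[
\bar\epsilon_n \asymp (\log n)^{\frac{d+1}{2+(d+1)/\alpha}}\, n^{-\frac{\alpha}{2\alpha+d+1}},
\]
the prior-mass bound $\Pi\big(\|\nu-\nu_0\|_\infty \le \bar\epsilon_n\big) \ge e^{-c\, n\bar\epsilon_n^2}$, and the existence of sieves $\mathcal{B}_n$ (built from RKHS balls of the rescaled process at length-scales in a band $[\underline{\ell}_n,\bar\ell_n]$, unioned with a small-norm ball) with $\Pi(\nu \notin \mathcal{B}_n) \le e^{-c' n\bar\epsilon_n^2}$ for $c'$ as large as desired (by choosing the multiplicative constants), and with metric entropy $\log N(\epsilon_n,\mathcal{B}_n,\|\cdot\|_\infty) \le c'' n\epsilon_n^2$ where $\epsilon_n \asymp (\log n)^{(d+1)/2}\bar\epsilon_n$ — the extra $(\log n)^{(d+1)/2}$ factor being exactly what is needed to control the entropy of the SE-RKHS balls (the covering numbers of these balls grow like $(\log(1/\epsilon))^{d+1}$ in the exponent). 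I would then assemble $\mathcal{F}_n = \{(\mu,g): \sigma^{-1}(\mu)\in\mathcal{B}_n^{\mu},\ \sigma^{-1}(g)\in\mathcal{B}_n^{g}\}$, so that $\Pi(\mathcal{F}_n^c)$ is bounded by the sum of the two complement probabilities, hence $\le e^{-c_2 n\bar\epsilon_n^2}$ with $c_2$ chosen larger than $c_1+\kappa$ (possible since the sieve-escape constant is free); and since $\sigma$ is $L$-Lipschitz, $\|.\|_1 \le \|.\|_\infty$ on the bounded domain $S$ gives $N(L\zeta_0\epsilon_n,\mathcal{B}_n,\|\cdot\|_\infty)$ as an upper bound for the $L_1$-covering number of $\mathcal{F}_n$, of the right order $e^{c_3 n\epsilon_n^2}$. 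Finally I would check $n\bar\epsilon_n^2\to\infty$ (immediate from the polynomial rate) and $\epsilon_n = o((\log n)^{-2})$ (also immediate since $\epsilon_n\to 0$ polynomially up to log factors), and then Propositions~\ref{prop:conc-ds} and \ref{prop:conc-l1} deliver \eqref{eq:conc-sd} and \eqref{eq:conc-L1}.

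The main obstacle is bookkeeping the two distinct rates $\bar\epsilon_n$ (prior-mass rate) and $\epsilon_n$ (entropy rate) consistently through the three assumptions, since the SE kernel is the reason the excerpt introduced the $\epsilon_n \ge \bar\epsilon_n$ dichotomy in the first place: I must make sure the constant $c_3$ in the entropy bound is tied to the \emph{larger} $\epsilon_n$ while the prior-mass and sieve-escape constants $c_1,c_2$ are tied to the \emph{smaller} $\bar\epsilon_n$, and that the requirement $c_2 > c_1 + \kappa$ can still be met — this works because the constant in the sieve-complement bound is arbitrary in the GP theory. A secondary technical point is the domain: the cited GP results are usually stated on a cube $[0,1]^{d+1}$, whereas $\mu$ lives on $S=[0,1]^{d+1}$ (fine) but $g$ lives on $[0,a]\times[0,b]^d$; this is handled by noting $[0,a]\times[0,b]^d \subset S$ is a fixed (in $n$) sub-box, so restricting a GP on $S$ to it only improves the concentration function, and extending a H\"older function on the sub-box to $S$ (Whitney/McShane extension) preserves smoothness up to constants. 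Neither of these is deep, so I expect the proof to be short, essentially a citation-plus-assembly argument, with the rate-bookkeeping being the only place one can slip.
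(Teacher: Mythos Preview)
Your approach matches the paper's: reduce to the latent GPs via the Lipschitz link, invoke the van der Vaart--van Zanten theory for rescaled squared-exponential processes with Inverse-Gamma length-scale (the paper cites Theorem~3.1 of \cite{van2009adaptive} directly, which is the source of the textbook result you quote), then assemble product sieves exactly as in the Mat\'ern proof, tracking the two rates $\bar\epsilon_n$ and $\epsilon_n$ just as you describe. The only slip is your claim that $\sigma^{-1}(g_0)$ is well-defined because $g_0$ takes values in a compact subinterval of $(0,C)$: Assumption~\ref{ass:boundedness} only gives $g_0 \geq 0$, so $g_0$ may vanish and $\sigma^{-1}(0)$ is undefined; the paper (in the Mat\'ern proof that the SE proof references) fixes this by first truncating $g_{0,n} := \max(g_0,\bar\epsilon_n/4)$ and working with $\phi_{0,n} = \sigma^{-1}(g_{0,n})$, absorbing the extra $\bar\epsilon_n/4$ via the triangle inequality.
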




\begin{remark}[Separable kernel]
\ds{add references to papers using separable kernels in GP}
    In spatio-temporal data, it is common to choose a kernel function which factorises over temporal and spatial variables as it leads to computational acceleration. For instance, one can use a separable Mat\'ern kernel with hyperparameters $(\tau_t,\tau_s)$. The corresponding RKHS is the tensor product Sobolev space $\mathcal{S}^{\tau_t + \frac{1}{2}}([0,1]) \otimes \mathcal{S}^{\tau_s + \frac{d}{2}}([0,1]^d)$. Recall the definitions of 
    the tensor product Sobolev spaces:
\begin{align*}
    &\mathcal{S}^{\gamma}([0,1]) \otimes \mathcal{S}^{\gamma}([0,1]^{d}) = \{ f \in L_2([0,1]^{d+1}) : D^\alpha f \in L_2([0,1]^{d+1}), \quad \forall \alpha,  \|\alpha\|_\infty \leq \gamma \}.
\end{align*}
In contrast to the Sobolev space $\mathcal{S}^{\gamma}([0,1]^{d+1})$ which requires that the (partial) derivatives of order $\zeta$ are squared integrable for each $\|\zeta\|_1 \leq \gamma$, the tensor product Sobolev space $\mathcal{S}^{\gamma}([0,1]) \otimes \mathcal{S}^{\gamma}([0,1]^{d})$ requires that partial derivatives of order $\zeta$ are squared integrable for each $\|\zeta\|_\infty \leq \gamma$, which is a strictly stronger condition. 
Therefore,  $\mathcal{S}^{\gamma}([0,1]) \otimes \mathcal{S}^{\gamma}([0,1]^{d}) \subset \mathcal{S}^{\gamma}([0,1]^{d+1})$ \citep{zhang2023regression}. In fact, $\mathcal{S}^{\gamma}([0,1]) \otimes \mathcal{S}^{\gamma}([0,1]^{d})$ contains functions that are finite linear combinations of product of functions in $\mathcal{S}^{\gamma}([0,1])$ and $\mathcal{S}^{\gamma}([0,1]^{d})$, i.e.,
\begin{align*}
    f \in H^{\gamma}([0,1]) \otimes H^{\gamma}([0,1]^{d}) \iff f = \sum_j f_{tj}(t)f_{sj}(s).
\end{align*}
In other words, $f \in \mathcal{S}^{\gamma}([0,1]) \otimes \mathcal{S}^{\gamma}([0,1]^{d})$ has an additive form but  is not in general separable in time and space. 
\end{remark}



\section{Proofs}\label{sec:proofs}

\subsection{Proof of Proposition \ref{lem:identifiability}}

First, we recall that $N,N'$ are two Hawkes processes respectively with parameter $f$ and $f'$ and that $N\overset{d}{=}N'$ if and only if $\lambda(t,s) \overset{d}{=} \lambda'(t,s)$ for almost every $(t,s)$. We also note that $f=f'$ directly implies that $ \lambda(t,s) \overset{d}{=} \lambda'(t,s), \forall (t,s)$ therefore it is sufficient to prove the reverse implication. Second, we notice that for any $t \geq 0$, with $A_t = [0,t] \times [0,1]^d $, under Assumption \ref{ass:mu},
$$ \mathbb{P}\Big( N (A_t) = 0 \Big) = e^{-\int_{0}^t \int_{[0,1]^d} \mu(u, s)\ du\ ds} > 0,$$
and similarly for $N'$.
Therefore, for any $(t,s)$, conditionally on $ N (A_t) = 0 $, we have
\begin{align*}
     \lambda(t, s) = \mu(t,s).
\end{align*}
and similarly for $N'$. Therefore, $N\overset{d}{=}N'$ implies that
\begin{align*}
     \lambda(t, s)\ |\  N (A_t) = 0 \overset{d}{=}
  \lambda'(s, t)\ |\  N' (A_t) = 0 ,
\end{align*}
i.e., $\mu(t,s ) = \mu'( t,s)$ for all $(t,s)$, which is equivalent to
$$\mu= \mu'.$$

Now since by Assumption \ref{ass:mu}, $\int_0^{1-a} \int_{[b,1-b]^d}\mu(t,s)dt ds >0 $, the probability of the event $\{N(\bar S) \geq 1\}$ with $\bar S = [0,1-a]\times [b,1-b]^d$ is non-null (and similarly for the event $\{N'(\bar S) \geq 1\}$). Conditioning on $\{N(\bar S) \geq 1\}$, we denote by $(T_1,S_1)$ (resp. $(T_1', S_1')$) the spatio-temporal coordinates of the first event of $N$ (resp. $N'$) . Then, conditionally on the event $\{N(\bar S) \geq 1 \}$, for any $t \geq T_1$ and $s \in S$,
\begin{align*}
    \lambda(t, s) = \mu(t,s) + g(t-T_1,s - S_1).
\end{align*}
Therefore,
\begin{align*}
    \lambda(t, s)\ |\  N(\bar S) =1  \overset{d}{=}  \lambda'(t, s)\ |\  N'(\bar S) = 1
\end{align*}
implies that 
\begin{align*}
     \mu(t,s) + g(t-T_1,s - S_1)  \overset{d}{=}  \mu'(t,s) + g'(t-T_1',s - S_1'),
\end{align*}
and also that 
\begin{align*}
    g(t-T_1,s - S_1)  \overset{d}{=}  g'(t-T_1',s - S_1'),
\end{align*}
since $\mu(t,s) < +\infty$ under Assumption \ref{ass:mu}. Since $(T_1,S_1) \in \bar S$ and the equality above holds for any $1 \geq t \geq T_1$ and $s \in S$, it also holds for any $u = t-T_1 \in [0,a]$ and $v=s-S_1$ such that $\|v\|_\infty \leq b$ that
\begin{align*}
    g(u,v) = g'(u,v),
\end{align*}
which is equivalent to $g = g'$.

\subsection{Proof of Proposition \ref{prop:conc-ds}}

Before proving Proposition \ref{prop:conc-ds}, we state three technical lemmas. This first lemma defines a high probability event $\Omega_n$ on which the average and the maximum number of points on  $S = [0,1]^{d+1}$ are bounded. The second one provides upper bounds on the Kullback-Leibler (KL) divergence and on the deviations of the log-likelihood ratio of $f$ vs $f_0$, for $f$ sufficiently close to $f_0$, specifically $f \in B_\infty(\epsilon_n)$. The last lemma establishes the existence of tests with exponentially decaying Type-I and Type-II errors.

\begin{lemma}[High probability event]\label{lem:event}
For any $\alpha > 0$, there exists $\delta_0, c_\alpha > 0$ such that
\begin{align*}
    \Omega_n := \left \{ \frac{\underline{\mu}}{1 - \|g_0\|_1}  - \delta_n \leq \frac{1}{n}\sum_i N^i[0,1]^{d+1}  \leq \frac{\bar \mu}{1 - \|g_0\|_1} + \delta_n \right\} \cap \left \{ \sup_{i=1,\dots, n} N^i[0,1]^{d+1} \leq c_\alpha \log n \right \},
\end{align*}
with $\delta_n = \delta_0\frac{\log n }{\sqrt{n}}$ and $c > 0$. Under Assumption \ref{ass:boundedness},
\begin{align*}
    \mathbb P_0[\Omega_n] \geq 1 - 3 n^{-\alpha}.
\end{align*}
    
\end{lemma}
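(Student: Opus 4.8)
The plan is to control the total count $N^i[0,1]^{d+1}$ for a single sequence via a moment/concentration bound, then apply a union bound and a Bernstein-type inequality over the $n$ i.i.d.\ sequences. First I would recall that, under Assumption \ref{ass:boundedness}, the expected number of points of a single Hawkes sequence satisfies the branching-process identity
\begin{align*}
    \mathbb E_0\big[ N^1[0,1]^{d+1} \big] = \mathbb E_0\Big[ \int_S \lambda^1_{t,s}(f_0)\, dt\, ds \Big],
\end{align*}
and that iterating the cluster representation (equivalently, taking expectations in Definition \ref{def:st-hawkes} and summing the geometric series generated by $\|g_0\|_1 < 1$) gives the two-sided bound
\begin{align*}
    \frac{\underline{\mu}}{1 - \|g_0\|_1} \leq \mathbb E_0\big[ N^1[0,1]^{d+1} \big] \leq \frac{\bar \mu}{1 - \|g_0\|_1}.
\end{align*}
This pins down the centering constants in $\Omega_n$.

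Next I would establish a sub-exponential (or at least polynomial-tail) bound on $N^1[0,1]^{d+1}$. Because the branching ratio $\|g_0\|_1<1$ and $\mu_0\le \bar\mu$, the cluster sizes in the Poisson-cluster representation have exponential moments, so there exist constants $C_0, \theta_0>0$ with $\mathbb E_0[e^{\theta_0 N^1[0,1]^{d+1}}] \leq C_0$. From this, the single-sequence tail bound $\mathbb P_0(N^1[0,1]^{d+1} > x) \leq C_0 e^{-\theta_0 x}$ follows by Markov. Applying a union bound over $i=1,\dots,n$ with $x = c_\alpha \log n$ and $c_\alpha$ chosen so that $\theta_0 c_\alpha \geq \alpha + 1$ gives
\begin{align*}
    \mathbb P_0\Big( \sup_{i\le n} N^i[0,1]^{d+1} > c_\alpha \log n \Big) \leq C_0\, n \cdot n^{-(\alpha+1)} = C_0\, n^{-\alpha},
\end{align*}
which handles the maximum term (absorbing $C_0$ into the constant if needed, or sharpening $c_\alpha$).

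For the average term, I would apply a Bernstein inequality for the i.i.d.\ sum $\frac1n\sum_i N^i[0,1]^{d+1}$. Since the summands have exponential moments (variance proxy and Orlicz norm both $O(1)$), Bernstein gives, for $t = \delta_n = \delta_0 \log n / \sqrt n$,
\begin{align*}
    \mathbb P_0\Big( \big| \tfrac1n\textstyle\sum_i N^i[0,1]^{d+1} - \mathbb E_0[N^1[0,1]^{d+1}] \big| > \delta_n \Big) \leq 2\exp\big( - c\, n \delta_n^2 \big) = 2\exp\big( - c\,\delta_0^2 (\log n)^2 \big),
\end{align*}
which is $o(n^{-\alpha})$ for any fixed $\alpha$ once $n$ is large, and can be made $\le 2n^{-\alpha}$ by choosing $\delta_0$ large enough (or is automatically smaller since $(\log n)^2 \gg \log n$). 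Combining the two estimates with a final union bound yields $\mathbb P_0[\Omega_n] \geq 1 - 3n^{-\alpha}$, after noting that the deviation event for the mean is centered at $\mathbb E_0[N^1[0,1]^{d+1}]$, which by the sandwich above lies in $[\underline\mu/(1-\|g_0\|_1),\ \bar\mu/(1-\|g_0\|_1)]$, so the event $\{|\tfrac1n\sum_i N^i - \mathbb E_0[N^1]|\le\delta_n\}$ is contained in the two-sided event defining $\Omega_n$.

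The main obstacle is establishing the exponential-moment bound $\mathbb E_0[e^{\theta_0 N^1[0,1]^{d+1}}]<\infty$ rigorously in the non-stationary, spatio-temporal setting: the cleanest route is the Poisson-cluster (branching) representation, dominating the spatio-temporal cluster by a purely temporal subcritical Hawkes/Galton--Watson process with immigration rate $\bar\mu$ and offspring mean $\|g_0\|_1<1$, whose total progeny over $[0,1]$ is well known to have a finite exponential moment. One must be slightly careful that the finite-range assumption on $g_0$ and the restriction to the bounded window $S$ only decrease the counts, so the domination goes through; everything else is a routine application of Bernstein's inequality and a union bound.
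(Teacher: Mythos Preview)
Your proposal is correct and follows essentially the same strategy as the paper: control the single-sequence tail via an exponential moment (obtained by dominating the spatio-temporal process by a subcritical temporal Hawkes/branching process with immigration rate $\bar\mu$ and offspring mean $\|g_0\|_1<1$), apply a union bound for the supremum, and use Bernstein for the i.i.d.\ mean. The only minor differences are bookkeeping: the paper cites Proposition~2.1 of \cite{reynaud2007some} for the tail bound on the dominating temporal process rather than deriving the exponential moment from the Galton--Watson representation, and for the mean it applies the one-sided Bernstein inequality (Lemma~\ref{lem:bernstein1}) separately to the upper dominating process $\bar N^i$ (background $\bar\mu$) and the lower dominating process $\underline N^i$ (background $\underline\mu$), whose expectations are exactly the two endpoints, instead of centering at the true mean and invoking the sandwich $\underline\mu/(1-\|g_0\|_1)\le \mathbb E_0[N^1]\le \bar\mu/(1-\|g_0\|_1)$ as you do.
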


\begin{lemma}[Kullback-Leibler]\label{lem:KL}
    Under Assumption \ref{ass:boundedness} and if $\epsilon_n = o((\log n)^{-2})$, there exist $b_1,b_2 > 0$ such that for any $f \in B_\infty(\epsilon_n)$,
    \begin{align}
        &KL(f,f_0) := \mathbb E_0[\log L(N | f_0) - \log L(N|f)] \leq \kappa n \epsilon_n^2 ( 1 + o(1)) \label{eq:bound-kl}\\
        &\mathbb P_0( \log L(N | f_0) - \log L(N|f) > b_1 n \epsilon_n^2) \leq \frac{b_2}{n \epsilon_n^2},
    \end{align}
with
\begin{align}
    \kappa := \frac{4 \log 2}{\underline{\mu}}\left \{ 2 + 4\left(\frac{\bar \mu}{1 - \|g_0\|_1} + \Lambda_{0,2} \right) \right \}.
\end{align}
\end{lemma}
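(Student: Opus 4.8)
The plan is to write the log-likelihood ratio explicitly and decompose it into a deterministic part and a martingale/fluctuation part. Recall that for each sequence $i$,
\begin{align*}
    \log \frac{L(N^i|f_0)}{L(N^i|f)} = \sum_{j=1}^{m_i} \log \frac{\lambda^i_{t^i_j,s^i_j}(f_0)}{\lambda^i_{t^i_j,s^i_j}(f)} - \int_S \bigl( \lambda^i_{t,s}(f_0) - \lambda^i_{t,s}(f)\bigr)\, dt\, ds.
\end{align*}
The first step is to control $\|\lambda^i(f) - \lambda^i(f_0)\|_\infty$ pointwise: on $S$ one has $|\lambda^i_{t,s}(f) - \lambda^i_{t,s}(f_0)| \le \|\mu-\mu_0\|_\infty + N^i[0,1]^{d+1}\,\|g-g_0\|_\infty \le \epsilon_n(1 + N^i[0,1]^{d+1})$, and on the high-probability event $\Omega_n$ of Lemma \ref{lem:event} the factor $N^i[0,1]^{d+1}$ is at most $c_\alpha\log n$, so the intensity difference is $O(\epsilon_n\log n) = o(1)$ by the hypothesis $\epsilon_n = o((\log n)^{-2})$. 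Also $\lambda^i_{t,s}(f_0) \ge \underline\mu > 0$, so the ratio $\lambda^i(f_0)/\lambda^i(f)$ stays in a fixed neighbourhood of $1$; this lets me use the second-order Taylor expansion $\log(1+x) \ge x - x^2$ (for $x$ small), equivalently $-\log(\lambda^i(f)/\lambda^i(f_0)) \le -(\lambda^i(f)-\lambda^i(f_0))/\lambda^i(f_0) + (\lambda^i(f)-\lambda^i(f_0))^2/\lambda^i(f_0)^2$ up to the $o(1)$ correction. The constant $4\log 2/\underline\mu$ appearing in $\kappa$ is exactly what comes out of bounding $1/\lambda^i(f_0) \le 1/\underline\mu$ together with the elementary inequality $-\log(1+x)+x \le (\log 2)x^2$ valid on a suitable interval; keeping careful track of these constants reproduces the stated $\kappa$.

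For the \textbf{KL bound} \eqref{eq:bound-kl}: taking $\mathbb E_0$, the compensator identity $\mathbb E_0[\sum_j h(t^i_j,s^i_j)] = \mathbb E_0[\int_S h(t,s)\lambda^i_{t,s}(f_0)\,dt\,ds]$ turns the sum into an integral against $\lambda^i(f_0)$. The linear terms cancel: $\mathbb E_0[\int (\lambda^i(f)-\lambda^i(f_0))/\lambda^i(f_0)\cdot \lambda^i(f_0)] - \mathbb E_0[\int (\lambda^i(f_0)-\lambda^i(f))] = 0$. What remains is the quadratic term, bounded by $\frac{1}{\underline\mu}\mathbb E_0[\int (\lambda^i(f)-\lambda^i(f_0))^2\,dt\,ds]$ up to the universal constant. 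Expanding $(\lambda^i(f)-\lambda^i(f_0))^2 \le 2\|\mu-\mu_0\|_\infty^2 + 2(\int g-g_0\, dN^i)^2 \le 2\epsilon_n^2(1 + (N^i[0,1]^{d+1})^2)$ and using $\mathbb E_0[(N^i[0,1]^{d+1})^2] \le$ const (which follows from the branching-ratio-$<1$ assumption, bounding the point count by a subcritical branching process with bounded immigration; alternatively relate it to $\bar\mu/(1-\|g_0\|_1)$ and the second moment $\Lambda_{0,2}$) gives the factor $2 + 4(\bar\mu/(1-\|g_0\|_1) + \Lambda_{0,2})$ in $\kappa$. One subtlety: I must handle the contribution of the complement $\Omega_n^c$ to the expectation, but since $\mathbb P_0(\Omega_n^c)\le 3n^{-\alpha}$ decays polynomially and the integrand has at most polynomially-growing moments, this contributes $o(n\epsilon_n^2)$ and is absorbed in the $(1+o(1))$.

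For the \textbf{deviation bound}: write $\log L(N|f_0) - \log L(N|f) = \mathbb E_0[\cdot] + \bigl(\log L(N|f_0)-\log L(N|f) - \mathbb E_0[\cdot]\bigr)$. The first term is $\le \kappa n\epsilon_n^2(1+o(1)) \le b_1 n\epsilon_n^2/2$ for $b_1 > \kappa$ and $n$ large. For the centred term I would apply Chebyshev's inequality: $\mathbb P_0(|X - \mathbb E_0 X| > b_1 n\epsilon_n^2/2) \le 4\mathrm{Var}_0(X)/(b_1 n\epsilon_n^2)^2$, so it suffices to show $\mathrm{Var}_0(\log L(N|f_0)-\log L(N|f)) \lesssim n\epsilon_n^2$. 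Here independence across the $n$ sequences reduces the variance to $n$ times the single-sequence variance, and the single-sequence variance is $O(\epsilon_n^2)$ by the same second-order expansion: the dominant fluctuation is the martingale part $\sum_j \log(\lambda^i(f_0)/\lambda^i(f))(t^i_j,s^i_j) - \int \log(\lambda^i(f_0)/\lambda^i(f))\lambda^i(f_0)$, whose predictable quadratic variation is $\int (\log \lambda^i(f_0)/\lambda^i(f))^2 \lambda^i(f_0)\,dt\,ds \lesssim \epsilon_n^2 (\log n)^2 \cdot (\text{bounded point count})$, and $\epsilon_n^2(\log n)^2 = o(\epsilon_n^2)\cdot(\log n)^4$... more carefully, on $\Omega_n$ this is $\lesssim \epsilon_n^2$ up to constants since the $\log$-ratio is itself $O(\epsilon_n\log n)$ and squaring gives $O(\epsilon_n^2(\log n)^2)$, which is still $o(1)\cdot$const — yielding $\mathrm{Var}_0 \lesssim n\epsilon_n^2(\log n)^2$, hence $\mathbb P_0(\cdot) \lesssim (\log n)^2/(n\epsilon_n^2)$; to get the cleaner $b_2/(n\epsilon_n^2)$ one uses the sharper bound $|\log(\lambda^i(f_0)/\lambda^i(f))| \lesssim |\lambda^i(f)-\lambda^i(f_0)|/\underline\mu$ without the extra $\log n$, since the point-count factor only enters linearly and its second moment is bounded by a constant. \textbf{The main obstacle} I anticipate is precisely this bookkeeping: getting the variance bound down to exactly $O(n\epsilon_n^2)$ (not $O(n\epsilon_n^2(\log n)^2)$) requires using moment bounds on $N^i[0,1]^{d+1}$ rather than the crude $\Omega_n$ truncation, and carefully separating the part of the log-likelihood that is genuinely $O(\epsilon_n)$ in $L^2$ from the lower-order remainder — this is the step where the finite-range and subcriticality assumptions on $g_0$ (giving finite exponential moments of the point counts) do the real work.
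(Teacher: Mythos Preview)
Your plan is correct and matches the paper's proof: rewrite the log-likelihood ratio via $\psi(x)=-\log x+x-1$, split the expectation on $\Omega_n/\Omega_n^c$, bound $\psi(x)\le 4\log 2\,(x-1)^2$ for $x\ge 1/2$ (this is the source of the $4\log 2/\underline\mu$), and apply Chebyshev after controlling the per-sequence second moment. The obstacle you flag is resolved exactly as you suspect: rather than the crude $|\lambda^i(f)-\lambda^i(f_0)|\le\epsilon_n(1+N^i)$ bound, the paper centers the triggering part by its compensator and uses the point-process isometry $\mathbb E_0\bigl[(\int h\,(dN-\lambda\,dt\,ds))^2\bigr]=\mathbb E_0\bigl[\int h^2\lambda\,dt\,ds\bigr]$ together with Cauchy--Schwarz on the drift term $\int(g-g_0)\lambda^i(f_0)$, which simultaneously produces the exact factor $2+4(\bar\mu/(1-\|g_0\|_1)+\Lambda_{0,2})$ in $\kappa$ and gives an $O(\epsilon_n^2)$ per-sequence variance without any $\log n$ contamination.
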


\begin{lemma}[Tests]\label{lem:tests}
Under Assumptions \ref{ass:sieves} and \ref{ass:boundedness}, there exists a test function  $\phi := \phi(N, \epsilon_n)$ such that
\begin{align*}
    &\mathbb E_0[\phi \mathds{1}_{\Omega_n}] = o(1) \\
    &\sup_{f \in \mathcal{F}_n}  \mathbb E_f[(1 - \phi)  \mathds 1_{\Omega_n} \mathds 1_{f \in A_n}] \leq e^{-b_2n \epsilon_n^2}
\end{align*}
where $b_2 > c_1$, $A_n$ is defined in \eqref{eq:def-an} and $\Omega_n$ is defined in Lemma \ref{lem:event}.
\end{lemma}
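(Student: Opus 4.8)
\textbf{Proof plan for Lemma \ref{lem:tests}.} The plan is to follow the classical route for building tests in posterior contraction proofs (Ghosal--Ghosh--van der Vaart, as adapted to point processes in \cite{donnet2020nonparametric, sulem2024bayesian}): construct a local test against each centre of a finite $L_1$-net of $\mathcal F_n$, take their maximum, and control the Type-I error through a union bound calibrated by the covering number of Assumption \ref{ass:sieves}. Let $\mathcal N$ be a minimal $\zeta_0\epsilon_n$-net of $\mathcal F_n$ for $\|\cdot\|_1$, so $|\mathcal N|\le C(\zeta_0\epsilon_n,\mathcal F_n,\|\cdot\|_1)\le e^{c_3 n\epsilon_n^2}$, and set $\phi:=\max_{f_1\in\mathcal N}\phi_{f_1}$. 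Recalling that $A_n$ (see \eqref{eq:def-an}) is essentially $\{f:d_S(f,f_0)>M\epsilon_n\}$, it suffices to produce, for each $f_1$, a local test $\phi_{f_1}$ with $\mathbb E_0[\phi_{f_1}\mathds{1}_{\Omega_n}]\le e^{-\kappa' n\epsilon_n^2}$ and $\sup_{f\in\mathcal F_n,\,\|f-f_1\|_1\le\zeta_0\epsilon_n}\mathbb E_f[(1-\phi_{f_1})\mathds{1}_{\Omega_n}\mathds{1}_{d_S(f,f_0)>M\epsilon_n}]\le e^{-\kappa' n\epsilon_n^2}$, where $\kappa'$ can be made as large as we wish by enlarging the separation constant $M$. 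Then $\mathbb E_0[\phi\mathds{1}_{\Omega_n}]\le|\mathcal N|e^{-\kappa' n\epsilon_n^2}=e^{-(\kappa'-c_3)n\epsilon_n^2}\to 0$ since $n\epsilon_n^2\ge n\bar\epsilon_n^2\to\infty$, and the stated Type-II bound follows because every $f\in\mathcal F_n\cap A_n$ lies within $\zeta_0\epsilon_n$ of some net centre.

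For the local test I would use a sign-weighted compensated statistic. Fix $f_1=(\mu_1,g_1)$ and put $h^i_{t,s}:=\operatorname{sign}\!\big(\lambda^i_{t,s}(f_1)-\lambda^i_{t,s}(f_0)\big)$; since $\lambda^i_{t,s}(\cdot)$ only sees events strictly before $t$, this is a predictable integrand. Define
\begin{align*}
  \Psi_{f_1}(N):=\frac1n\sum_{i=1}^n\int_S h^i_{t,s}\,\big(dN^i(t,s)-\lambda^i_{t,s}(f_0)\,dt\,ds\big),\qquad \phi_{f_1}:=\mathds{1}\!\left\{\Psi_{f_1}(N)>\tfrac12 M\epsilon_n\right\}.
\end{align*}
Under $\mathbf P_0$ the compensator of $dN^i$ is exactly $\lambda^i_{t,s}(f_0)\,dt\,ds$, so $\Psi_{f_1}(N)$ is a mean-zero average of i.i.d.\ terms. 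Under $\mathbf P_f$ the compensator is $\lambda^i_{t,s}(f)\,dt\,ds$, hence
\begin{align*}
  \Psi_{f_1}(N)=\underbrace{\frac1n\sum_i\int_S h^i_{t,s}\big(dN^i(t,s)-\lambda^i_{t,s}(f)\,dt\,ds\big)}_{=:\,R_f}+\frac1n\sum_i\int_S h^i_{t,s}\big(\lambda^i_{t,s}(f)-\lambda^i_{t,s}(f_0)\big)\,dt\,ds,
\end{align*}
where $R_f$ is a mean-zero average of i.i.d.\ terms under $\mathbf P_f$, and, using the elementary inequality $\operatorname{sign}(a)(a+b)\ge|a|-|b|$ followed by the triangle inequality together with $\|\lambda^i(f)-\lambda^i(f_1)\|_1\le(1+N^i(S))\|f-f_1\|_1$, the second term is at least $d_S(f,f_0)-2(1+\bar N_n)\zeta_0\epsilon_n$ with $\bar N_n:=\frac1n\sum_i N^i(S)$. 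On $\Omega_n$ this is $\ge\tfrac34 M\epsilon_n$ on the event $\{d_S(f,f_0)>M\epsilon_n\}$, provided $M$ is large enough relative to the fixed constants $\zeta_0$ and $\bar\mu/(1-\|g_0\|_1)$ (which bounds $\bar N_n$ on $\Omega_n$ by Lemma \ref{lem:event}).

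It then remains to bound deviations of the mean-zero i.i.d.\ sums $\Psi_{f_1}(N)$ (under $\mathbf P_0$) and $R_f$ (under $\mathbf P_f$), which is where the work concentrates. On $\Omega_n$ each summand is bounded in absolute value by $N^i(S)+\int_S\lambda^i(f_0)\,dt\,ds\le 2c_\alpha\log n+\bar\mu=O(\log n)$ (using $\|g_0\|_1<1$), and its variance is $O(1)$: by the compensator (Itô) identity the variance of a single term equals $\mathbb E[\int_S(h^i)^2\lambda^i\,dt\,ds]\le\mathbb E[N^i(S)]$, finite by subcriticality and uniformly over $f\in\mathcal F_n$ provided the sieve controls $\|g\|_1$. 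A Bernstein inequality for i.i.d.\ sums, applied after truncating at level $O(\log n)$ to account for $\Omega_n$ not being the full space (the correction being negligible since Hawkes counts have all polynomial moments), then gives, for $x$ of order $\epsilon_n$,
\begin{align*}
  \mathbf P_\star\!\left(\big|\,{\textstyle\frac1n\sum_i}(\cdots)\big|>x,\ \Omega_n\right)\le 2\exp\!\left(-\frac{n x^2}{C_1+C_2\,x\log n}\right),\qquad \star\in\{\mathbf P_0\}\cup\{\mathbf P_f:f\in\mathcal F_n\}.
\end{align*}
The decisive point is that the hypothesis $\epsilon_n=o((\log n)^{-2})$ forces $x\log n=o(1)$, so the bound sits in the sub-Gaussian regime $2\exp(-n x^2/2C_1)$; with $x\asymp M\epsilon_n$ this yields errors $\le 2\exp(-cM^2 n\epsilon_n^2)$ with $c$ \emph{independent of $M$}. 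Taking $\kappa':=cM^2>c_3$ closes the Type-I estimate after the union bound, and, since $\phi_{f_1}=1$ on $\Omega_n\cap\{d_S(f,f_0)>M\epsilon_n\}$ unless $R_f<-\tfrac14 M\epsilon_n$, the same bound with $b_2:=cM^2>c_1$ for $M$ large closes the Type-II estimate. I expect the main obstacle to be precisely this concentration step: establishing the Bernstein-type inequality for the spatio-temporal compensated integrals $\Psi_{f_1}(N)$ and $R_f$ with constants uniform over the sieve and valid simultaneously under $\mathbf P_0$ and all $\mathbf P_f$ — the new concentration inequality that the purely temporal arguments do not supply, the complication arising from the joint space--time convolution structure of $g$ and from carrying the event $\Omega_n$ through the estimates — together with the secondary but necessary check that the sieve $\mathcal F_n$ indeed keeps $\mathbb E_f[N^i(S)]$ uniformly bounded.
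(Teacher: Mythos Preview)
Your local test is essentially the paper's: $\Psi_{f_1}$ is the difference of the two compensated counts $N^i(S_{1,i})-\Lambda^{i,0}(S_{1,i})$ and $N^i(S_{1,i}^c)-\Lambda^{i,0}(S_{1,i}^c)$ (with $S_{1,i}=\{\lambda^i(f_1)\ge\lambda^i(f_0)\}$) that the paper thresholds separately, and the covering/union-bound skeleton is the same. The genuine divergence is that you use a single $\zeta_0\epsilon_n$-net with the fixed threshold $\tfrac12 M\epsilon_n$, whereas the paper slices $A_n$ into shells $\mathcal F_j=\{j\epsilon_n\le d_S(f,f_0)\le(j+1)\epsilon_n\}$, uses $j$-dependent nets of radius $\zeta j\epsilon_n$, and sets the threshold at order $j\epsilon_n$.

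This slicing is not cosmetic; it is exactly what closes the Type-II gap you label ``secondary''. Your Bernstein step requires the variance of $\int h^i(dN^i-\lambda^i(f))$ under $\mathbf P_f$ to be $O(1)$ uniformly in $f\in\mathcal F_n$; by the compensator identity that variance equals $\mathbb E_f[N^i(S)]$, which is finite only when $\|g\|_1<1$ --- a constraint Assumption~\ref{ass:sieves} does \emph{not} place on the sieve. The same obstruction hits the truncation step: the bound $|\tilde Y_i|=O(\log n)$ needs $\|\mu\|_1,\|g\|_1$ controlled on $\mathcal F_n$, and the ``negligible polynomial-moment correction'' invokes moments under $\mathbf P_f$, not $\mathbf P_0$. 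So the check you flag is not secondary and in general fails. The paper avoids any appeal to $\mathbb E_f[N^i(S)]$ by noting (Lemma~\ref{lem:sd-bound}) that on $\Omega_n$ and for $f\in\mathcal F_j$ one has the \emph{data-dependent} bound $\sum_i\Lambda^{i,f}(S_{1,i})\le n(C_0+(j+1)\epsilon_n)$, then feeding this $v$ into a Bernstein inequality (Lemma~\ref{lem:bernstein2}) phrased in terms of a compensator bound rather than moments. Because $v$ grows with $j$, the threshold must as well, which is precisely what forces the $j$-indexed nets and the sum over shells; a single net with a fixed threshold cannot absorb a $j$-dependent variance proxy. For Type-I the paper uses a different, moment-based Bernstein (Lemma~\ref{lem:bernstein1}), legitimate since $\mathbf P_0$ satisfies Assumption~\ref{ass:boundedness}; your single-Bernstein plan misses this asymmetry between null and alternative.
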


Proofs of these technical results are reported in Appendix \ref{app:proof-lemmas}. We now prove the proposition. For $M> 0$ and $\epsilon_n> 0$, define the subset of interest
    \begin{align}\label{eq:def-an}
        A_n = \{ f \in \mathcal{F} : d_S(f,f_0) > M \epsilon_n \}.
    \end{align}
    Note that since $\Pi(A_n|N_n) \in [0,1]$, \eqref{eq:conc-sd} is equivalent to $\mathbb E_0[\Pi( A_n | N_n)] = o(1)$, i.e., convergence in expectation. Given a test function $\phi := \phi(N, \epsilon_n) \in \{0,1\}$ and a high-probability event $\Omega_n$,
    \begin{align}
        \mathbb E_0[\Pi( A_n | N_n)] &=\mathbb E_0[\Pi( A_n | N_n)(\mathds{1}_{\Omega_n} + \mathds{1}_{\Omega_n^c})] \nonumber \\
        &\leq \mathbb E_0[\Pi( A_n | N_n)(\phi + 1-\phi)\mathds{1}_{\Omega_n}] + \mathbb E_0[\mathds{1}_{\Omega_n^c}] \nonumber \\
        &\leq \mathbb E_0[\phi \mathds{1}_{\Omega_n}] + \mathbb E_0[(1 - \phi) \Pi( A_n | N_n) \mathds{1}_{\Omega_n}] + \mathbb P_0[\Omega_n^c], \label{eq:decomp-post}
    \end{align}
    using that $\Pi( A_n | N_n) \leq 1$. With $\Omega_n$ as defined in Lemma \ref{lem:event}, we have $\mathbb P_0[\Omega_n^c] = o(1)$.  Moreover, we can write $\Pi( A_n | N_n)$ as
    \begin{align}\label{eq:decomp-an}
        \Pi( A_n | N_n) = \frac{\int_{A_n} L(N|f)\Pi(f) df}{\int_{\mathcal{F}} L(N|f) \Pi(f) df}  = \frac{\int_{\mathcal{F}} \mathds{1}_{f \in A_n}\frac{L(N|f)}{L(N|f_0)} \Pi(f) df}{\int_{\mathcal{F}} \frac{L(N|f)}{L(N|f_0)} \Pi(f) df}, 
    \end{align}
    defining 
    \begin{align*}
        D_n := \int_{\mathcal{F}} \frac{L(N|f)}{L(N|f_0)} \Pi(f) df 
    \end{align*}
    Note that for any deterministic sequence $\eta_n > 0$, 
    \begin{align*}
        \mathbb E_0[(1 - \phi) \Pi( A_n | N_n) \mathds{1}_{\Omega_n}] &= \mathbb E_0[(1 - \phi) \Pi( A_n | N_n) \mathds{1}_{\Omega_n}(\mathds{1}_{D_n \geq \eta_n} + \mathds{1}_{D_n < \eta_n})] \\
        &\leq \mathbb P_0[D_n < \eta_n] + \mathbb E_0[(1 - \phi) \Pi( A_n | N_n) \mathds{1}_{\Omega_n}\mathds{1}_{D_n > \eta_n}].
    \end{align*}
    Then, using \eqref{eq:decomp-post}, \eqref{eq:decomp-an} and the inequality above,
    \begin{align*}
        \mathbb E_0[\Pi( A_n | N_n)] &\leq \mathbb E_0[\phi \mathds{1}_{\Omega_n}] +  \mathbb P_0[D_n < \eta_n] \\
        &
        \qquad + \mathbb E_0\left[\frac{1}{D_n}(1 - \phi) \int_{\mathcal{F}}   \mathds{1}_{f \in A_n} \frac{L(N|f)}{L(N|f_0)} \Pi(f)  df   \mathds{1}_{D_n \geq \eta_n} \mathds{1}_{\Omega_n}\right]   + \mathbb P_0[\Omega_n^c] \\
        &\leq  \mathbb E_0[\phi \mathds{1}_{\Omega_n}] +  \mathbb P_0[D_n < \eta_n] + \frac{1}{\eta_n} \mathbb E_0\left[(1 - \phi) \int_{\mathcal{F}_n} \frac{L(N|f)}{L(N|f_0)}   \mathds{1}_{f \in A_n} \Pi(f) df  \mathds{1}_{\Omega_n}\right] \\
        &\qquad + \mathbb P_0[\Omega_n^c] + \frac{1}{\eta_n} \mathbb{E}_0\left[ \int_{\mathcal{F}_n^c} \frac{L(N|f)}{L(N|f_0)}   \Pi(f) df \right].
    \end{align*}
    using in the second inequality that $\mathcal{F} = \mathcal{F}_n \cup \mathcal{F}_n^c$. By Fubini's theorem,
    \begin{align*}
       \mathbb{E}_0\left[ \int_{\mathcal{F}_n^c} \frac{L(N|f)}{L(N|f_0)}   \Pi(f) df \right] 
        &=   \int_{\mathcal{F}_n^c} \mathbb{E}_0\left[\frac{L(N|f)}{L(N|f_0)}\right] \Pi(f) df =  \int_{\mathcal{F}_n^c} \Pi(f) df = \Pi(\mathcal{F}_n^c)
    \end{align*}
    using that $\mathbb{E}_0[\frac{L(N|f)}{L(N|f_0)}] = 1$. Defining $\eta_n = \Pi(B_\infty(\bar \epsilon_n) ) e^{- b_1 n \bar \epsilon_n^2} \geq e^{- (b_1 + c_1) n \bar \epsilon_n^2}$ under Assumption \ref{ass:prior-mass} for some $b_1 > 0$ such that $b_1 < c_2 - c_1$, then, using that under Assumption \ref{ass:sieves}, $\Pi(\mathcal{F}_n^c) \leq e^{- c_2 n \bar \epsilon_n^2}$, we have
    \begin{align*}
        \frac{1}{\eta_n}  \Pi(\mathcal{F}_n^c) \leq e^{(b_1 + c_1 - c_2) n \bar \epsilon_n^2} = o(1).
    \end{align*}
    Using the same computations as in \cite{donnet2020nonparametric} (proof of Theorem 1),
    \begin{align*}
        \mathbb P_0[&D_n < \Pi(B_\infty(\bar \epsilon_n) ) e^{- b_1 n \bar \epsilon_n^2}] \\
        &\qquad \leq \frac{1}{\Pi(B_\infty(\bar \epsilon_n) ) (1 - e^{- b_1 n \bar \epsilon_n^2})} \int_{B_\infty(\bar \epsilon_n)} \mathbb P_0( \log L(f_0|N) - \log L(f|N) > b_1 n \bar \epsilon_n^2) \Pi(f) df.
    \end{align*}
    Using Lemma \ref{lem:KL}, if $b_1 > \kappa$, we have for any $f \in B_\infty(\bar \epsilon_n)$,
    \begin{align*}
        \mathbb P_0( \log L(f_0|N) - \log L(f|N) > b_1 n \bar \epsilon_n^2) \leq \frac{b_2}{n \bar \epsilon_n^2},
  \end{align*}
    with $b_2 = \frac{2\kappa}{(b_1 - \kappa)^2}$ which implies that
    \begin{align*}
        \mathbb P_0[D_n < \Pi(B_\infty(\epsilon_n) ) e^{- b_1 n \epsilon_n^2}]  \leq \frac{b_2}{n \bar \epsilon_n^2 (1 - e^{-b_1n \bar \epsilon_n^2})} = o(1),
    \end{align*}
    under the assumption that $n \bar \epsilon_n^2 \to \infty$. Note that since $c_2 > c_1 + \kappa$ under Assumption \ref{ass:sieves}, then there exists $b_1 \in (\kappa, c_2-c_1)$ (e.g., $\kappa + \frac{c_2-c_1-\kappa}{2}$).

    Moreover, using Lemma \ref{lem:tests}, we can find $\phi$ such that
    \begin{align*}
        &\mathbb E_0[\phi \mathds{1}_{\Omega_n}] = o(1) \\
        &\sup_{f \in \mathcal{F}_n} \mathbb E_f[(1 - \phi) \mathds{1}_{\Omega_n} \mathds{1}_{f \in A_n}] \leq e^{-b_2 n \epsilon_n^2} = o( \Pi( B_\infty(\epsilon_n)) e^{-b_1 n \bar \epsilon_n^2}) = o(\eta_n).
    \end{align*}
    Thus, using again Fubini's theorem,
    \begin{align*}
       \frac{1}{\eta_n} \mathbb E_0\left[(1 - \phi) \int_{\mathcal{F}_n} \frac{L(N|f)}{L(N|f_0)}   \mathds{1}_{f \in A_n} \Pi(f) df  \mathds{1}_{\Omega_n}\right] &= \frac{1}{\eta_n}  \mathbb \int_{\mathcal{F}_n} \mathbb E_0\left[\frac{L(N|f)}{L(N|f_0)} (1 - \phi)  \mathds{1}_{f \in A_n} \mathds{1}_{\Omega_n}\right] \Pi(f) df   \\
        &=  \frac{1}{\eta_n}  \int_{\mathcal{F}_n} \mathbb E_f[(1 - \phi)    \mathds{1}_{\Omega_n}\mathds{1}_{f \in A_n}]  \Pi(f) df   \\
        &\leq \frac{\Pi(\mathcal{F}_n)}{\eta_n}e^{-b_2 n \bar \epsilon_n^2}   \leq \frac{ e^{-b_2 n \bar \epsilon_n^2}}{\eta_n} = o(1),
    \end{align*}
    and this concludes this proof.

\subsection{Proof of Proposition \ref{prop:conc-l1}}

We first state a lemma that provides a bound on the expectation under any $f \in A_{n}^c$ of the random variables $(Z_i)$ (as defined in \eqref{eq:zi}). Its proof can be found in Appendix \ref{app:proof-lemmas}.

\begin{lemma}\label{lem:ef}
For any $f \in A_{n}^c$, there exists a constant $p_0 >0$ such that on $\Omega_n$,
    \begin{align*}
        \mathbb E_f[Z_1] \geq p_0 \|f-f_0\|_1.
    \end{align*}
\end{lemma}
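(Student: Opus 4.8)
The plan is to bound $\mathbb E_f[Z_1]$ from below by a constant multiple of $\|\mu-\mu_0\|_1$ using realisations of $N^1$ with no triggering at all, and separately by a constant multiple of $\|g-g_0\|_1$ (up to a correction term in $\|\mu-\mu_0\|_1$) using realisations with exactly one triggering event, and then to take a convex combination of the two bounds so that the correction term cancels. Throughout I write $\Delta_\mu=\mu-\mu_0$, $\Delta_g=g-g_0$, and recall that $Z_i=\|\lambda^i(f)-\lambda^i(f_0)\|_1=\int_S|\lambda^i_{t,s}(f)-\lambda^i_{t,s}(f_0)|\,dt\,ds$ (possibly with the point-count truncation built into \eqref{eq:zi}); the two configurations I condition on below have $N^1(S)\in\{0,1\}$, hence are unaffected by any such truncation and lie in $\Omega_n$.

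\emph{Background part.} Restrict the expectation to $\{N^1(S)=0\}$: on this event $\lambda^1_{t,s}(f)=\mu(t,s)$ and $\lambda^1_{t,s}(f_0)=\mu_0(t,s)$ for all $(t,s)$, so $Z_1=\|\Delta_\mu\|_1$, while $\mathbf P_f(N^1(S)=0)=\exp\big(-\int_S\mu\big)\geq\exp(-\|\mu_0\|_1-\|\Delta_\mu\|_1)$, which is bounded below by a positive constant $c_1$ on the range of admissible $f$ (using Assumption \ref{ass:boundedness} and the boundedness of the admissible intensities). Hence $\mathbb E_f[Z_1]\geq c_1\|\Delta_\mu\|_1$.

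\emph{Triggering part, and combination.} Next I restrict the expectation to the event that $N^1$ has exactly one point, lying in $\bar S=[0,1-a]\times[b,1-b]^d$ — chosen so that for a point at $(t_1,s_1)\in\bar S$ the entire triggering window $[t_1,t_1+a]\times\{\|s-s_1\|_\infty\leq b\}$ sits inside $S$. Conditionally on the point being at $(t_1,s_1)$ one has $\lambda^1_{t,s}(f)-\lambda^1_{t,s}(f_0)=\Delta_\mu(t,s)+\Delta_g(t-t_1,s-s_1)\mathds 1\{t>t_1\}$, and restricting the $(t,s)$-integral to the triggering window and substituting $u=t-t_1,\ v=s-s_1$ gives
\[ Z_1\ \geq\ \int_{[0,a]\times\{\|v\|_\infty\leq b\}}\!\big|\Delta_\mu(t_1+u,s_1+v)+\Delta_g(u,v)\big|\,du\,dv\ \geq\ \big(\|\Delta_g\|_1-\|\Delta_\mu\|_1\big)_+, \]
using the triangle inequality and the change-of-variables identity $\int|\Delta_g(u,v)|\,du\,dv=\|\Delta_g\|_1$. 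The density of this one-point configuration at $(t_1,s_1)$ is $\mu(t_1,s_1)\exp(-\Lambda_f(t_1,s_1))$ with $\Lambda_f(t_1,s_1)\leq\|\mu\|_1+\|g\|_1$ bounded, and $\int_{\bar S}\mu\geq\int_{\bar S}\mu_0-\|\Delta_\mu\|_1\geq\tfrac12\underline\mu\,|\bar S|$ for $n$ large by Assumption \ref{ass:boundedness}; integrating out $(t_1,s_1)$ therefore yields $\mathbb E_f[Z_1]\geq c_2\big(\|\Delta_g\|_1-\|\Delta_\mu\|_1\big)_+$ for some $c_2>0$. Combining, for every $\theta\in[0,1]$ one has $\mathbb E_f[Z_1]\geq\theta c_1\|\Delta_\mu\|_1+(1-\theta)c_2(\|\Delta_g\|_1-\|\Delta_\mu\|_1)$; taking $\theta$ so that $\theta c_1-(1-\theta)c_2=(1-\theta)c_2$ makes both coefficients equal to $c_1c_2/(c_1+2c_2)$, giving $\mathbb E_f[Z_1]\geq p_0\big(\|\Delta_\mu\|_1+\|\Delta_g\|_1\big)=p_0\|f-f_0\|_1$ with $p_0=c_1c_2/(c_1+2c_2)>0$.

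The crux — and the only genuinely delicate point — is the triggering part: the deviation $\Delta_g$ never appears unaccompanied by $\Delta_\mu$, since every region of $S$ where $g$ contributes to the intensity is also a region where $\mu$ does, so one cannot hope for a clean inequality $\mathbb E_f[Z_1]\gtrsim\|\Delta_g\|_1$ on its own. The convex combination is exactly the device that circumvents this: it trades a fraction of the (large) background bound against the possibly negative correction $-\|\Delta_\mu\|_1$ carried by the triggering bound. Beyond that, the argument is bookkeeping: one only needs the void and one-point probabilities bounded below uniformly in the admissible $f$, which follows from Assumption \ref{ass:boundedness} together with the uniform control of intensities and point counts furnished by $\Omega_n$ (Lemma \ref{lem:event}).
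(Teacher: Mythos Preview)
Your overall strategy coincides with the paper's: restrict to the void event $\{N^1(S)=0\}$ to isolate $\|\mu-\mu_0\|_1$, restrict to a one-point configuration to isolate $\|g-g_0\|_1$ up to a $\|\mu-\mu_0\|_1$ correction, and combine. The paper uses a slightly different one-point event $\Omega_\tau=\{N^1(S_\tau^-)=0,\ N^1(S_\tau)=1,\ N^1(S_\tau^+)=0\}$ (constraining only a temporal window) and combines via an explicit case split on whether $\|\Delta_\mu\|_1\gtrless c\|\Delta_g\|_1$ rather than your convex combination; these differences are cosmetic, and your convex-combination device is arguably cleaner.

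There is, however, one step that is not justified. Your claim
\[
\int_{\bar S}\mu\ \geq\ \int_{\bar S}\mu_0-\|\Delta_\mu\|_1\ \geq\ \tfrac12\underline\mu\,|\bar S|\quad\text{``for $n$ large''}
\]
presupposes that $\|\Delta_\mu\|_1$ becomes small with $n$. It does not: the hypothesis $f\in A_n^c$ says only $d_S(f,f_0)\le M\epsilon_n$, and via $\Omega_n$ and Lemma~\ref{lem:sd-bound} this bounds $\|\mu\|_1$, $\|g\|_1$ (hence $\|f-f_0\|_1$) from \emph{above}, but it does not force $\|\Delta_\mu\|_1$ below the threshold $\tfrac12\underline\mu|\bar S|$. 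In particular $\mu$ could be small on $\bar S$, making your $c_2$ non-uniform in $f$ and collapsing the convex combination. The repair is the case split implicit in the paper: if $\|\Delta_\mu\|_1$ exceeds a fixed threshold (e.g.\ $\tfrac12\underline\mu|\bar S|$), your background bound $\mathbb E_f[Z_1]\ge c_1\|\Delta_\mu\|_1$ alone already gives $\mathbb E_f[Z_1]\ge p_0\|f-f_0\|_1$, since $\|f-f_0\|_1$ is bounded on $\Omega_n\cap A_n^c$; otherwise $\|\Delta_\mu\|_1$ is genuinely below the threshold, your lower bound on $\int_{\bar S}\mu$ holds, and the triggering argument goes through verbatim.
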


For $M' > 0$,  let 
    \begin{align*}
        A_{n,1} = \{ f \in \mathcal{F} : \|f- f_0\|_1 > M' \epsilon_n \}.
    \end{align*}
    Using that $A_{n,1} =  (A_n \cap A_{n,1}) \cup  (A_n^c \cap A_{n,1})$, we have
    \begin{align*}
         \mathbb E_0[\Pi( A_{n,1} | N_n)] \leq  \mathbb E_0[\Pi( A_n | N_n)] + \mathbb E_0[\Pi( A_n^c \cap A_{n,1}  | N_n)] = o(1) + \mathbb E_0[\Pi( A_n^c \cap A_{n,1}  | N_n)].
    \end{align*}
    using Proposition \ref{prop:conc-ds}. Applying the same decomposition as in the proof of Proposition \ref{prop:conc-ds}, we have
        \begin{align*}
        \mathbb E_0[\Pi(A_n^c \cap A_{n,1}  | N_n)] 
        &\leq  \mathbb P_0[D_n < \eta_n] + \frac{1}{\eta_n} \int_{\mathcal{F}_n} \mathbb E_f[\mathds{1}_{\Omega_n} \mathds{1}_{f \in A_{n,1} \cap A_n^c}] \Pi(f) df   \\
        &+ \mathbb P_0[\Omega_n^c] + \frac{1}{\eta_n} \Pi(\mathcal{F}_n^c) \\
        &\leq  o(1) + \frac{1}{\eta_n} \sup_{f \in \mathcal{F}_n\cap  A_{n,1}} \mathbb E_f[\mathds{1}_{\Omega_n} \mathds{1}_{f \in A_n^c}].
    \end{align*}
    Recall that
    \begin{align*}
        f \in A_n^c &\iff d_{S}(f,f_0) \leq M \epsilon_n  \\
        &\iff \frac{1}{n} \sum_{i=1}^n \int |\lambda^i_{t,s}(f) -  \lambda^i_{t,s}(f_0)| dt ds \leq M\epsilon_n
    \end{align*}
    where the integral above is over the whole observation domain $[0,1] \times [0,1]^{d+1}$. Recall that for any sequence $N^i$, we call $t^i_1, t^i_2, \dots$ the times of the events. Defining
    \begin{align}\label{eq:zi}
        Z_i := \int_0^{t^i_2} \int_{[0,1]^{d}} |\lambda^i_{t,s}(f) -  \lambda^i_{t,s}(f_0)| dt ds.
    \end{align}
    By convention, if $t^i_2$ does not exist (i.e., the sequence has only one or no event at all), we set $t^i_2 = 1$. Note that the $(Z_i)_i$ are i.i.d. and that
    \begin{align*}
       \mathbb P_f \left[\{f \in A_n^c\} \cap \{\Omega_n\} \right] \leq  \mathbb P_f \left[\frac{1}{n} \sum_{i=1}^n Z_i \leq M\epsilon_n \cap \{\Omega_n\}\right] 
    \end{align*}
    
    For any $f \in A_{n}^c$, using Lemma \ref{lem:ef}, there exists a constant $p_0 >0$ such that on $\Omega_n$,
    \begin{align*}
        \mathbb E_f[Z_i] \geq p_0 \|f-f_0\|_1.
    \end{align*}
    Thus,
    \begin{align*}
        \mathbb P_f \left[\frac{1}{n} \sum_{i=1}^n Z_i \leq M\epsilon_n \cap \{\Omega_n\}\right] &= \mathbb P_f \left[\frac{1}{n} \sum_{i=1}^n ( Z_i -\mathbb E_f[Z_i] ) \leq M\epsilon_n - \frac{1}{n}\sum_i \mathbb E_f[Z_i] \cap \{\Omega_n\}\right]  \\
        &\leq \mathbb P_f \left[\frac{1}{n} \sum_{i=1}^n (Z_i -\mathbb E_f[Z_i] ) \leq M\epsilon_n - p_0 \|f-f_0\|_1 \right] \\ 
        &\leq \mathbb P_f \left[\frac{1}{n} \sum_{i=1}^n (Z_i -\mathbb E_f[Z_i] ) \leq  - p_0 \|f-f_0\|_1 /2 \right],
    \end{align*}
    using that $f \in A_{n,1}$ and for $M' > 2M/p_0$.
    Moreover,
    \begin{align*}
        Z_i &=\int_0^{t^i_2} \int_{[0,1]^{d}} |\lambda^i_{t,s}(f) -  \lambda^i_{t,s}(f_0)|dt ds\\
        &= \int_0^{t^i_1} \int_{[0,1]^{d}} |\lambda^i_{t,s}(f) -  \lambda^i_{t,s}(f_0)| dt ds + \int_{t^i_1}^{t^i_2} \int_{[0,1]^{d}} |\lambda^i_{t,s}(f) -  \lambda^i_{t,s}(f_0)| dt ds \\
        &= \int_0^{t^i_1}\int_{[0,1]^{d}} |\mu(t,s) -  \mu_0(t,s)| dt ds \\
        &+ \int_{t^i_1}^{t^i_2}\int_{[0,1]^{d}} |\mu(t,s) + g(t-t_1^i, s-s_1^i) -  \mu_0(t,s) - g_0(t-t_1^i,  s-s_1^i)| dt ds \\
        &\leq \int_0^{t^i_2}\int_{[0,1]^{d}} |\mu(t,s) -  \mu_0(t,s)| dt ds \\
        &+ \int_{t^i_1}^{t^i_2}\int_{[0,1]^{d}} |g(t-t_1^i,  s-s_1^i) - g_0(t-t_1^i,  s-s_1^i)| dt ds \\
        &\leq \|\mu - \mu_0\|_1 +  \int_0^{t^i_2 - t^i_1}\int_{[0,1]^{d}} |g(u,  s-s_1^i) - g_0(u,  s-s_1^i)| du ds \\
        &\leq \|\mu - \mu_0\|_1 + \|g - g_0\|_1 = \|f - f_0\|_1.
    \end{align*}
    Thus,
    \begin{align*}
        &\mathbb E_f[Z_i^k] \leq \|f - f_0\|^k, \qquad k \geq 2.
    \end{align*}
    Applying Bernstein's inequality, we obtain
    \begin{align*}
         \mathbb P_f \left[\frac{1}{n} \sum_{i=1}^n Z_i - \mathbb E_f[Z_i] \leq  -p_0 \|f-f_0\|_1  /2 \right]  &\leq  \exp \left( - \frac{p_0^2 n\|f-f_0\|_1 ^2}{8\|f-f_0\|_1^2(1 + p_0/3) }\right)  \\
         &
         =\exp \left( - \frac{p_0^2 n}{8(1 + p_0/3) }\right) \\
         &= o(e^{-cn\epsilon_n^2}),
    \end{align*}
    for any constant $c > 0$. Therefore we can conclude that for any $f \in \mathcal{F}_n \cap A_{n,1}$,
    \begin{align*}
        \mathbb E_f[\mathds{1}_{\Omega_n} \mathds{1}_{f \in A_n^c}] = o(e^{-(b_1 + c_1) n \epsilon_n^2}) = o(\eta_n),
    \end{align*}
    recalling that $\eta_n = \Pi(B_\infty(\bar \epsilon_n) ) e^{- b_1 n \bar \epsilon_n^2} \geq e^{- (b_1 + c_1) n \bar \epsilon_n^2}$ with $\bar \epsilon_n \leq \epsilon_n$. Thus,
    \begin{align*}
        \frac{1}{\eta_n} \sup_{f \in \mathcal{F}_n\cap  A_{n,1}} \mathbb E_f[\mathds{1}_{\Omega_n} \mathds{1}_{f \in A_n^c}] = o(1),
    \end{align*}
    and this concludes the proof of this proposition.

\subsection{Proof of Proposition \ref{prop:GP1}}

We first show that Assumption \ref{ass:prior-mass} holds. Note that
\begin{align*}
    B_\infty(\epsilon_n) \supset \{ \mu : \|\mu - \mu_0\|_\infty \leq \epsilon_n/2\} \times \{ g : \|g - g_0\|_\infty \leq \epsilon_n/2\},
\end{align*}
therefore
\begin{align*}
    \Pi(B_\infty(\epsilon_n) ) \geq \Pi_\mu( \|\mu - \mu_0\|_\infty \leq \epsilon_n/2 ) \Pi_g(  \|g - g_0\|_\infty \leq \epsilon_n/2).
\end{align*}
Let us consider the first term $\Pi_\mu( \|\mu - \mu_0\|_\infty \leq \epsilon_n/2 )$. Since $\mu_0(x) \in [\underline{\mu}, \bar \mu]$ and $\sigma$ is bijective from $\mathbb R$ to $(0,C)$, let $\sigma^{-1}$ the inverse of $\sigma$ on $(0,C)$ and $\nu_0 := \sigma^{-1}(\mu_0)$. Furthermore, $\mu_0 \in C^{\alpha}(S)$ and $\sigma$ infinitely smooth imply that $\nu_0 \in C^{\alpha}(S)$. Since $\sigma$ is $L$-Lipschitz,
\begin{align*}
    \Pi_\mu( \|\mu - \mu_0\|_\infty \leq \epsilon_n/2 ) \geq \Pi_\nu( \|\nu - \nu_0\|_\infty \leq \epsilon_n/(2L) ),
\end{align*}
where $\Pi_\nu$ is the GP prior with kernel $k_\nu$. 


Recall that the RKHS of the Mat\'ern Kernel with parameter $\tau \leq \alpha$, denoted by $\mathcal{H}_{MAT}^\tau$, is the Sobolev space with smoothness $\tau + \frac{d+1}{2}$. Since $\alpha \geq \tau$, $w_0 \in \mathcal{H}_{MAT}^\tau$. Using Lemma B.1 in \cite{GiordanoKirichenkoRousseau2025} there exists $L_1 > 0$ such that
\begin{align*}
    \Pi_\nu( \|\nu - \nu_0\|_\infty \leq \epsilon_n/(2L) ) \geq e^{-L_1 n \epsilon_n^2},
\end{align*}
for any $\epsilon_n \to 0$ such that $\epsilon_n \gtrsim n^{-\tau/(2\tau + d + 1)}$.
Consider now the second term $\Pi_g( g : \|g - g_0\|_\infty \leq \epsilon_n/2)$. Let $g_{0,n} := \max(g_0, \epsilon_n/4)$ so that $\|g_0 - g_{0,n}\|_\infty \leq \epsilon_n/4$. Therefore,
\begin{align*}
    \Pi_g( g : \|g - g_0\|_\infty \leq \epsilon_n/2) \geq \Pi_g( g : \|g - g_{0,n}\|_\infty \leq \epsilon_n/4). 
\end{align*}
Defining $\phi_{0,n} = \sigma^{-1}(g_{0,n})$, using the same argument as for $\nu_0$ and since $\sigma$ is $L$-Lipschitz, there exists $L_2 > 0$ such that 
\begin{align*}
    \Pi_g(  \|g - g_{0,n}\|_\infty \leq \epsilon_n/4) \geq \Pi_\phi(  \|\phi - \phi_{0,n}\|_\infty \leq \epsilon_n/(4L)) \geq e^{-L_2 n \epsilon_n^2}, 
\end{align*}
with $\Pi_\phi$ the GP prior with Mat\'ern kernel on $[0,a] \times [0,b]^d$ and $\epsilon_n$ as before.
Therefore,
\begin{align*}
    \Pi(B_\infty(\epsilon_n) ) &\geq  \Pi_\nu( \|\nu - \nu_0\|_\infty \leq \epsilon_n/(2L) ) \Pi_\phi(  \|\phi - \phi_{0,n}\|_\infty \leq \epsilon_n/(4L))\\
    &\geq  \Pi_\nu( \|\nu - \nu_0\|_\infty \leq \epsilon_n/(4L) ) \Pi_\phi(  \|\phi - \phi_{0,n}\|_\infty \leq \epsilon_n/(4L)) \\
    &\geq e^{-(L_1 + L_2)n\epsilon_n^2}, 
\end{align*}
which demonstrates Assumption \ref{ass:prior-mass} with $c_1 := L_1 + L_2$ and $\bar \epsilon_n = \epsilon_n \asymp n^{-\tau/(2\tau + d + 1)}$.

We now prove that Assumption \ref{ass:sieves} holds. For $M_1, M_2 > 0$,  let
\begin{align*}
    \mathcal{M}_n = M_1 \epsilon_n \mathbb B_1 + M_2 \sqrt{n} \epsilon_n \mathbb H_1,
\end{align*}
with $\mathbb B_1, \mathbb H_1$ are  the unit balls respectively in $L_\infty(S)$ and in $\mathcal{H}_{MAT}^\tau$ (w.r.t. the corresponding Sobolev norm). Using Lemma B.2 in \cite{GiordanoKirichenkoRousseau2025}, for any $\epsilon_n \to 0$ such that $\epsilon_n \gtrsim n^{-\tau/(2\tau + d + 1)}$ and $R_1>0$, there exists $M_1, M_2 > 0$ and $R_2>0$ such that
\begin{align}
    &\Pi_\nu(\mathcal{M}_n^c) \leq e^{-R_1 n\epsilon_n^2} \nonumber \\
    &\log C(\epsilon_n, \mathcal{M}_n, \|\cdot \|_\infty) \leq R_2 n\epsilon_n^2. \label{eq:bound-covering}
\end{align}
Similarly we construct
\begin{align*}
 \mathcal{N}_n = M_1 \epsilon_n  \mathbb{\bar B}_1 + M_2 \sqrt{n} \epsilon_n  \mathbb{\bar H}_1,
\end{align*}
with $ \mathbb{\bar B}_1,  \mathbb{\bar H}_1$  the unit balls respectively in $L_\infty([0,a] \times [0,b]^d)$ and to the analog of $\mathcal{H}_{MAT}^\tau$ on $[0,a] \times [0,b]^d$ (w.r.t. the corresponding Sobolev norm) and we have
\begin{align}
    &\Pi_\phi(\mathcal{N}_n^c) \leq e^{-R_1 n\epsilon_n^2} \\
    &\log C(\epsilon_n, \mathcal{N}_n, \|\cdot \|_\infty) \leq R_2 n\epsilon_n^2.\label{eq:bound-covering2}
\end{align}

We then construct the sieves as
\begin{align*}
    \mathcal{F}_n = \sigma(\mathcal{M}_n) \times \sigma (\mathcal{N}_n).
\end{align*}
We obtain
\begin{align*}
    \Pi(\mathcal{F}_n) = \Pi_\mu(\sigma(\mathcal{M}_n))  \Pi_g(\sigma(\mathcal{N}_n))  &\geq \Pi_\nu(\mathcal{M}_n) \Pi_\phi(\mathcal{N}_n) \\
    &\geq (1 - e^{-R_1n\epsilon_n^2})^2 \\
    &\geq 1 - 2 e^{-R_1 n\epsilon_n^2} \geq 1 - e^{- \frac{R_1}{2} n\epsilon_n^2}.
\end{align*}
Therefore, choosing $R_1 > 2(c_1 + \kappa)$ we obtain the first part of Assumption \ref{ass:sieves}. Moreover, since $\sigma$ is $L$-Lipschitz,
\begin{align*}
    C(\epsilon, \mathcal{F}_n, \|\cdot \|_\infty) &= C(\epsilon, \sigma(\mathcal{M}_n), \|\cdot \|_\infty)C(\epsilon, \sigma(\mathcal{M}_n), \|\cdot \|_\infty) \\
    &\leq C(L \epsilon, \mathcal{M}_n, \|\cdot \|_\infty)C( L \epsilon, \mathcal{M}_n, \|\cdot \|_\infty).
\end{align*}
Therefore, using \eqref{eq:bound-covering} and \eqref{eq:bound-covering2}, we obtain
\begin{align*}
    C(\epsilon_n, \mathcal{F}_n, \|\cdot \|_\infty) \leq e^{2R_2 L^2 n \epsilon_n^2},
\end{align*} 
which proves the second part of  Assumption \ref{ass:sieves} with $c_3:= 2R_2 L^2$ and $\epsilon_n \asymp n^{-\tau/(2\tau + d + 1)}$.


\subsection{Proof of Proposition \ref{prop:GP2}}

To verify Assumptions \ref{ass:prior-mass} and \ref{ass:sieves} for the squared-exponential kernel with Inverse-Gamma prior on the length-scale, we follow the same steps as in the proof of Proposition \ref{prop:GP1} and  apply Theorem 3.1 from \cite{van2009adaptive}.


In fact, if $\epsilon_n = 2L K (\log n )^{\frac{d+1}{2 + (d+1)/\alpha}} n^{-\frac{\alpha}{2\alpha+d+1}}$ with $K>0$ sufficiently large, then
\begin{align*}
    \Pi_\nu( \|\nu - \nu_0\|_\infty \leq \epsilon_n/(2L) ) \geq e^{-\frac{n}{4L^2}\epsilon_n^2}.
\end{align*}
Using the same arguments as before, we obtain that Assumption \ref{ass:prior-mass} holds with  $c_1 := L_1 + L_2$ and $\bar \epsilon_n \asymp (\log n)^{\frac{d+1}{2 + (d+1)\alpha}} n^{-\tau/(2\tau + d + 1)}$. Moreover there exist $\mathcal{M}_n := \{\mu:S \to \mathbb R_+\}$, $\mathcal{G}_n : [0,a]\times [0,b]^d \to \mathbb R_+$, $M_1, M_2>0$ such that
\begin{align*}
    &\Pi(\mathcal{M}_n^c) \leq e^{-\frac{n}{L} \bar \epsilon_n^2} \\
    &\Pi(\mathcal{G}_n^c) \leq e^{-\frac{n}{L} \bar \epsilon_n^2} \\
    &C(M_1  \epsilon_n, \mathcal{M}_n,\|\cdot\|_\infty) \leq e^{ M_1^2 n \epsilon_n^2}\\
    &C(M_2  \epsilon_n, \mathcal{G}_n,\|\cdot\|_\infty) \leq e^{ M_2^2 n \epsilon_n^2},
\end{align*}
with $ \epsilon_n \asymp (\log n)^{\frac{d + 1}{2}} \bar \epsilon_n $, which allows to verify Assumption \ref{ass:sieves}.

\section{Conclusion}

The results of this work advance the theoretical understanding of Bayesian nonparametric inference for point processes by establishing posterior contraction rates for multivariate, non-stationary spatio-temporal Hawkes processes. By extending existing analyses for temporal Hawkes processes to the general spatio-temporal setting, we provide the first theoretical guarantees for Bayesian nonparametric learning of self-exciting mechanisms where both background and triggering components evolve across time and space. The framework is broadly relevant to applications involving complex dependency structures, such as seismic activity, neural spike trains, social or financial interactions, and information diffusion on networks, where events exhibit both temporal and spatial excitation.

Under mild regularity conditions on the true intensity and the prior, we show that flexible priors, particularly hierarchical Gaussian processes with squared-exponential  kernel yield asymptotically optimal concentration rates. Future work could extend these results to multivariate or nonlinear Hawkes processes, where interactions among multiple latent components or nonlinear excitation effects introduce new theoretical and computational challenges. Additionally, the inclusion of covariates would be an interesting direction as it would enhance the spatial learning.

\textbf{Acknowledgements}
For this project XM has received funding from the European Union's Horizon Europe research and innovation programme under the Marie Skłodowska-Curie grant agreement 101151781.

\newpage

\appendix

\section{Technical lemmas}

\subsection{Bounds on stochastic distance}

The next two lemmas provide lower and upper bounds on the stochastic distance using the $L_1$-norm, on the high probability event $\Omega_n$ (defined in Lemma \ref{lem:event}).

\begin{lemma}\label{lem:bound-ds-by-f1}
    On $\Omega_n$, for any $f,f'$,
    \begin{align*}
        d_{S}(f,f') \leq N_0 \|f - f'\|_1.
    \end{align*}
    with $N_0 = \bar \mu + \|g_0\|_1  + 1$.
\end{lemma}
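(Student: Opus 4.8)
The plan is to control each summand $\|\lambda^i(f) - \lambda^i(f')\|_1$ in the definition of $d_S$ individually, then average over $i$ and invoke the deterministic bound on the average number of points per sequence that holds on $\Omega_n$. The one nonroutine ingredient is the observation that, thanks to the finite-range assumption on the triggering kernel, any spatio-temporal translate of $g - g'$ has $L_1$-mass over $[0,1]^{d+1}$ at most $\|g - g'\|_1$.

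Concretely, I would fix $f = (\mu, g)$, $f' = (\mu', g')$ and a sequence $N^i$ with event coordinates $(t^i_j, s^i_j)_{j \le m_i}$, and write $\lambda^i_{t,s}(f) - \lambda^i_{t,s}(f') = (\mu - \mu')(t,s) + \sum_{j:\, t^i_j < t}(g - g')(t - t^i_j, s - s^i_j)$. The triangle inequality then gives
\begin{align*}
    \|\lambda^i(f) - \lambda^i(f')\|_1
    &\le \int_{[0,1]^{d+1}} |(\mu - \mu')(t,s)|\, dt\, ds + \sum_{j \le m_i} \int_{[0,1]^{d+1}} |(g - g')(t - t^i_j, s - s^i_j)|\, dt\, ds \\
    &\le \|\mu - \mu'\|_1 + N^i([0,1]^{d+1})\, \|g - g'\|_1 ,
\end{align*}
where the second line uses, for each $j$, the substitution $u = t - t^i_j$, $v = s - s^i_j$ together with the fact that $g - g'$ vanishes outside $[0,a] \times [0,b]^d$, so that the $j$-th integral is bounded by $\int_{[0,a] \times [0,b]^d} |(g - g')(u,v)|\, du\, dv = \|g - g'\|_1$ irrespective of where $(t^i_j, s^i_j)$ sits.

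Averaging over $i = 1, \dots, n$ and bounding the coefficient of $\|\mu - \mu'\|_1$ crudely by the larger of the two coefficients, I would obtain
\begin{align*}
    d_S(f, f') \le \max\Bigl( 1,\ \tfrac1n \sum_{i=1}^n N^i([0,1]^{d+1}) \Bigr)\, \|f - f'\|_1 .
\end{align*}
On $\Omega_n$ the average number of points per sequence is at most $\bar\mu/(1 - \|g_0\|_1) + \delta_n$ by Lemma \ref{lem:event}, which is bounded by the constant $N_0$ for $n$ large enough (and $1 \le N_0$ as well), so this yields $d_S(f, f') \le N_0 \|f - f'\|_1$.

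I do not anticipate a genuine obstacle: the only delicate point is the translation/support step, which is exactly where the finite-range hypothesis $g(t,s) = 0$ for $t \notin [0,a]$ or $\|s\|_\infty > b$ must be used to keep each shifted contribution at most $\|g - g'\|_1$ no matter where the triggering event lies; everything else is the triangle inequality, Fubini, and the high-probability bound defining $\Omega_n$.
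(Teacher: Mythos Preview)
Your argument is correct and follows essentially the same route as the paper's proof: triangle inequality to split the $\mu$ and $g$ contributions, bound each translated copy of $|g-g'|$ by $\|g-g'\|_1$, average over $i$, and then use the upper bound on $\tfrac1n\sum_i N^i([0,1]^{d+1})$ available on $\Omega_n$. Your explicit mention of the change of variables and the finite-range hypothesis is in fact more careful than the paper, which simply asserts $\int |g(t-t_j,s-s_j)-g'(t-t_j,s-s_j)|\,dt\,ds \le \|g-g'\|_1$ without comment.
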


\begin{proof}
    \begin{align*}
        d_{S}(f,f') &= \frac{1}{n} \sum_{i} \int |\lambda^i_f(t,s) - \lambda^i_{f'}(t,s)|dt ds \\
        &\leq \frac{1}{n}\sum_{i} \int |\mu(t,s) - \mu'(t,s)|dt ds + \frac{1}{n} \sum_{i} \int \left|\sum_{t_j<t} g(t-t_j,s-s_j) - g'(t-t_j,s-s_j) \right|dt ds \\
        &\leq  \|\mu(t,s) - \mu'(t,s)\|_1 + \frac{1}{n} \sum_{i}  \sum_{t_j<t} \int \left|g(t-t_j,s-s_j) - g'(t-t_j,s-s_j) \right|dt ds \\
        &\leq  \|\mu(t,s) - \mu'(t,s)\|_1 + \frac{1}{n} \sum_{i}  \sum_{t_j<t} \|g - g'\|_1 \\
        &\leq \|\mu - \mu'\|_1 + \|g - g'\|_1 \frac{1}{n}\sum_{i} N^i[0,1]^{d+1} \\
        &\leq N_0( \|\mu - \mu'\|_1 +  \|g - g'\|_1) = N_0 \|f-f'\|_1,
    \end{align*}
    with $N_0 = \bar \mu + \|g_0\|_1 + 1$ on $\Omega_n$.
\end{proof}

\begin{lemma}\label{lem:sd-bound}
    For any $f \in \mathcal{F}$, on $\Omega_n$,
    \begin{align}\label{eq:ineq-ds}
    &- d_S(f,f_0) +  \|\mu_0\|_1 + \|g_0\|_1 \frac{e_0}{2} \leq \|\mu\|_1 + \|g\|_1 \frac{\sum_{i=1}^n N^i(S) }{n} \leq  \|\mu_0\|_1 + \frac{3 e_0}{2} \|g_0\|_1  +   d_S(f,f_0) 
\end{align}
\end{lemma}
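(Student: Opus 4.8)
The plan is to relate the averaged intensity mass $\frac1n\sum_i\|\lambda^i(f)\|_1$ to $\|\mu\|_1+\|g\|_1\,n^{-1}\sum_iN^i(S)$, using that, by the triangle inequality for the $L_1$-norm,
\[
\Big|\tfrac1n\sum_i\|\lambda^i(f)\|_1-\tfrac1n\sum_i\|\lambda^i(f_0)\|_1\Big|\ \le\ \tfrac1n\sum_i\|\lambda^i(f)-\lambda^i(f_0)\|_1\ =\ d_S(f,f_0).
\]
Integrating the conditional intensity over $S$ gives the exact identity $\|\lambda^i(f)\|_1=\|\mu\|_1+\sum_{j\le m_i}\int_S g(t-t^i_j,s-s^i_j)\,dt\,ds$, and the finite-range assumption ($0<a,b<1/2$) forces each summand into $[0,\|g\|_1]$, equal to $\|g\|_1$ precisely when $(t^i_j,s^i_j)$ lies in the interior box $\bar S=[0,1-a]\times[b,1-b]^d$ (there the translated support of $g$ is contained in $S$). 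Averaging over $i$ this yields
\[
\|\mu\|_1+\|g\|_1\,\tfrac1n\sum_iN^i(\bar S)\ \le\ \tfrac1n\sum_i\|\lambda^i(f)\|_1\ \le\ \|\mu\|_1+\|g\|_1\,\tfrac1n\sum_iN^i(S),
\]
and likewise with $(\mu_0,g_0)$ in place of $(\mu,g)$.

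For the left-hand inequality of \eqref{eq:ineq-ds} I would chain: the right bound of the sandwich applied to $f$, then the triangle inequality above, then the left bound of the sandwich applied to $f_0$, and finally the lower count control on $\Omega_n$, namely $\frac1n\sum_iN^i(\bar S)\ge e_0/2$ (a Bernstein concentration for the i.i.d.\ interior counts around their mean $e_0$, in the same spirit as Lemma~\ref{lem:event}; $e_0>0$ because $\mathbb E_0[N^1(\bar S)]\ge\int_{\bar S}\mu_0\ge\underline\mu\,|\bar S|>0$ under Assumption~\ref{ass:boundedness}). This gives $\|\mu\|_1+\|g\|_1\frac1n\sum_iN^i(S)\ge\|\mu_0\|_1+\|g_0\|_1\frac{e_0}{2}-d_S(f,f_0)$. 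The right-hand inequality of \eqref{eq:ineq-ds} is obtained symmetrically: $\|\mu\|_1+\|g\|_1\frac1n\sum_iN^i(\bar S)\le\frac1n\sum_i\|\lambda^i(f)\|_1\le\frac1n\sum_i\|\lambda^i(f_0)\|_1+d_S(f,f_0)\le\|\mu_0\|_1+\|g_0\|_1\frac1n\sum_iN^i(S)+d_S(f,f_0)$, and on $\Omega_n$ the count bounds turn the middle term into at most $\frac32 e_0\|g_0\|_1$; the passage from $N^i(\bar S)$ to $N^i(S)$ on the left-hand side costs the extra boundary mass $\|g\|_1\frac1n\sum_iN^i(S\setminus\bar S)$, which I would dispatch using the branching-ratio bound $\|g\|_1<1$ together with the concentration of the boundary count on $\Omega_n$.

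The main obstacle is exactly this boundary effect. For an event within time-lag $a$ or spatial radius $b$ of $\partial S$, the integral $\int_S g(\cdot-t^i_j,\cdot-s^i_j)$ falls strictly short of $\|g\|_1$, so the correspondence between $\frac1n\sum_i\|\lambda^i(f)\|_1$ and $\|\mu\|_1+\|g\|_1\frac1n\sum_iN^i(S)$ is only one-sided in each direction and has to be repaired by controlling both the interior count $N^i(\bar S)$ and the boundary count $N^i(S\setminus\bar S)$ on the high-probability event $\Omega_n$; this is precisely why the slack factors $e_0/2$ and $3e_0/2$ appear in \eqref{eq:ineq-ds} rather than $e_0$ itself. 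The remainder is routine bookkeeping with the triangle inequality and the finite-range structure of $g$.
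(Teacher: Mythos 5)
Your approach is more careful than the paper's about the boundary truncation of the kernel integral, but as written it does not close. The paper's own proof simply replaces $\int_S\lambda^i_{t,s}(f)\,dt\,ds$ by $\|\mu\|_1 + N^i(S)\|g\|_1$ (treating every translated $g$-support as lying inside $S$), deduces $\big|\,\|\mu\|_1 + \|g\|_1\,\tfrac{1}{n}\sum_i N^i(S) - \|\mu_0\|_1 - \|g_0\|_1\,\tfrac{1}{n}\sum_i N^i(S)\,\big| \leq d_S(f,f_0)$, and then applies the two-sided bound on $\tfrac{1}{n}\sum_i N^i(S)$ available on $\Omega_n$. Your sandwich $\|\mu\|_1 + \|g\|_1\,\tfrac{1}{n}\sum_i N^i(\bar S) \leq \tfrac{1}{n}\sum_i\|\lambda^i(f)\|_1 \leq \|\mu\|_1 + \|g\|_1\,\tfrac{1}{n}\sum_i N^i(S)$ is the honest version of that substitution.

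However, there are three genuine gaps in your repair. First, $\Omega_n$ as defined in Lemma~\ref{lem:event} bounds only $\tfrac{1}{n}\sum_i N^i(S)$ and the maximal count; it gives no control on the interior count $\tfrac{1}{n}\sum_i N^i(\bar S)$ or the boundary count $\tfrac{1}{n}\sum_i N^i(S\setminus\bar S)$, so your appeal to ``concentration of the boundary count on $\Omega_n$'' has no footing without enlarging $\Omega_n$ and re-proving its $1-3n^{-\alpha}$ probability bound. Second, you claim the interior counts concentrate around mean $e_0$, but $e_0 = \bar\mu/(1-\|g_0\|_1)$ is an \emph{upper} bound on $\mathbb{E}_0[N^1(S)]$, not the mean of $N^1(\bar S)$; there is no reason $\mathbb{E}_0[N^1(\bar S)] \geq e_0/2$, so even a correct Bernstein inequality for the interior counts would not deliver $\tfrac{1}{n}\sum_i N^i(\bar S) \geq e_0/2$. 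Third, you invoke the branching-ratio bound $\|g\|_1 < 1$ to absorb the boundary mass $\|g\|_1\,\tfrac{1}{n}\sum_i N^i(S\setminus\bar S)$, but the lemma is stated for arbitrary $f=(\mu,g)\in\mathcal{F}$, where $\|g\|_1$ is unconstrained --- only the true $\|g_0\|_1<1$ is assumed. Your critique of the paper's shortcut is fair, but the fix as sketched needs new events and concentration lemmas that aren't supplied, and it does not reduce to the argument the paper actually gives.
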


\begin{proof}
On one hand,
\begin{align}
    d_S(f,f_0) &\leq \int_{[0,1]} \int_{[0,1]^d} | \mu(t,x) -   \mu_0(t,x)| dt dx + \frac{\sum_{i=1}^n N^i(S) }{n} \int_{[0,a]} \int_{[0,b]} | g(t,x) -   g_0(t,x)| dt dx \nonumber \\
    &\leq \| \mu -   \mu_0\|_1 + \frac{\sum_{i=1}^n N^i(S) }{n} \| g -   g_0\|_1 \label{eq:d_S_bound}
\end{align}
with

\begin{align*}
     &\| \mu -   \mu_0\|_1 = \int_{[0,1]} \int_{[0,1]^d} | \mu(t,x) -   \mu_0(t,x)| dt dx \\
     & \| g -   g_0\|_1 = \int_{[0,a]} \int_{[0,b]} | g(t,x) -   g_0(t,x)| dt dx.
\end{align*}
On the other hand,
\begin{align*}
    d_S(f,f_0) &\geq \frac{1}{n} \left| \sum_i \int_{[0,1]} \int_{[0,1]^d} (\lambda^i_{t,x}(f) -  \lambda^i_{t,x}(f_0)) dt dx \right| \\
    &\geq \frac{1}{n} \left| \sum_i (\|\mu\|_1 + N^i(S)\|g\|_1 - \|\mu_0\|_1 - N^i(S)\|g_0\|_1 \right|
\end{align*}
from which we deduce that
\begin{align*}    
    &- d_S(f,f_0) +  \|\mu_0\|_1 + \|g_0\|_1 \frac{\sum_{i=1}^n N^i(S) }{n} \leq \|\mu\|_1 + \|g\|_1 \frac{\sum_{i=1}^n N^i(S) }{n} \leq  \|\mu_0\|_1 + \|g_0\|_1 \frac{\sum_{i=1}^n N^i(S) }{n}  +  d_S(f,f_0)
\end{align*}
The previous inequality basically implies that if $d_S(f,f_0)$ is small, then also the $L_1$-norm of $(\mu, g)$ is bounded by the $L_1$-norm of the true parameter, provided that $\frac{\sum_{i=1}^n N^i(S) }{n}$ is concentrated around its expectation. On $\Omega_n$,
\begin{align*}
    \frac{\underline{ \mu}}{1 - \|g_0\|_1} - \delta_n \leq \frac{1}{n}\sum_i N^i[0,1]^{d+1}  \leq  \frac{\bar \mu}{1 - \|g_0\|_1} + \delta_n,
\end{align*}
thus with $e_0 = \frac{\bar \mu}{1 - \|g_0\|_1}$ and $n$ large enough, we obtain \eqref{eq:ineq-ds}.

\end{proof}

\subsection{Bernstein inequalities}

The next two lemmas are two useful versions of Bernstein inequalities for point processes. 

\begin{lemma}\label{lem:bernstein2}
    Let $N = (N^i)_{i=1, \dots, n}$ $n$ i.i.d. spatio-temporal Hawkes point processes on $[0,1] \times [0,1]^{d}$ with parameter $f$ satisfying Assumption \ref{ass:boundedness}. Let $(S_i)_{i=1, \dots, n} \subset [0,1]^{d+1}$ possibly random independent subsets and $v > 0$ a deterministic constant such that
    \begin{align*}
      \sum_{i = 1}^n \Lambda^i(S_i) =  \sum_i  \int_{S_{1,i}} \lambda^i_{t,s}(f) dt ds \leq v,
    \end{align*}
    where $\Lambda^i(S_i) = \int_{S_i} \lambda_{t,s}^i(f) dt ds$. Then for any $x > 0$,
\begin{align}
    &\mathbb P\left( \sum_{i=1}^n N^i(S_i) - \Lambda^i(S_i) \geq \sqrt{2vx} + \frac{x}{3} \right) \leq e^{-x}, \label{eq:bernstein3} \\
    &\mathbb P\left(  \sum_{i=1}^n N^i(S_i) - \Lambda^i(S_i) \leq -  \sqrt{2vx} - \frac{x}{3}  \right) \leq e^{-x},\label{eq:bernstein4}
\end{align}
where $\Lambda^i(S_i) = \int_{S_i} \lambda_{t,s}^i(f) dt ds$.
\end{lemma}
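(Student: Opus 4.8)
The plan is to run the classical Chernoff argument behind Bernstein's inequality, with the sub-exponential cumulant control supplied by the exponential supermartingale attached to a compensated counting-process integral. First I would recast each sequence $N^i$ as a marked point process in the time variable $t\in[0,1]$ with marks $s\in[0,1]^d$, and let $(\mathcal{F}^i_t)_{t\in[0,1]}$ denote its internal history (enlarged by any independent randomness used to define $S_i$), so that by Definition~\ref{def:st-hawkes} the process $t\mapsto N^i([0,t]\times[0,1]^d)$ has $(\mathcal{F}^i_t)$-intensity $s\mapsto\lambda^i_{t,s}(f)$. Writing $H^i_{t,s}:=\mathds{1}_{\{(t,s)\in S_i\}}$ --- a $\{0,1\}$-valued predictable field under the stated hypotheses on $S_i$ --- and $M_i:=N^i(S_i)-\Lambda^i(S_i)=\int H^i_{t,s}\bigl(dN^i(t,s)-\lambda^i_{t,s}(f)\,dt\,ds\bigr)$, the key input is that for every $\theta\in\mathbb{R}$,
\[
Z^i_\theta:=\exp\!\Bigl(\theta M_i-\bigl(e^{\theta}-1-\theta\bigr)\Lambda^i(S_i)\Bigr)
\]
is a nonnegative supermartingale with $\mathbb{E}[Z^i_\theta]\le 1$; this is the standard exponential inequality for stochastic integrals against compensated point processes, and the simplification of the compensator correction from $\int(e^{\theta H^i}-1-\theta H^i)\lambda^i$ down to $(e^{\theta}-1-\theta)\Lambda^i(S_i)$ uses that $H^i$ is $\{0,1\}$-valued. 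Since the pairs $(N^i,S_i)$ are independent across $i$, multiplying gives $\mathbb{E}\bigl[\prod_i Z^i_\theta\bigr]\le 1$, i.e.
\[
\mathbb{E}\Bigl[\exp\!\Bigl(\theta\sum_{i=1}^n M_i-\bigl(e^{\theta}-1-\theta\bigr)\sum_{i=1}^n\Lambda^i(S_i)\Bigr)\Bigr]\le 1 .
\]

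For the upper tail, fix $\theta\in(0,3)$. Multiplying through by $\exp\bigl((e^\theta-1-\theta)\sum_i\Lambda^i(S_i)\bigr)$ and using $e^\theta-1-\theta\ge0$ together with the deterministic bound $\sum_i\Lambda^i(S_i)\le v$ gives $\mathbb{E}[\exp(\theta\sum_i M_i)]\le\exp\bigl((e^\theta-1-\theta)v\bigr)$, and the elementary inequality $e^\theta-1-\theta\le\theta^2/\bigl(2(1-\theta/3)\bigr)$ on $(0,3)$ turns this into $\log\mathbb{E}[\exp(\theta\sum_i M_i)]\le\tfrac{v\theta^2}{2(1-\theta/3)}$. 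I would then invoke the standard conversion from a sub-gamma cumulant bound of the form $\tfrac{v\theta^2}{2(1-c\theta)}$ valid on $(0,1/c)$ to the Bernstein tail $\mathbb{P}(\,\cdot\ge\sqrt{2vx}+cx\,)\le e^{-x}$ (optimize at $\theta=\sqrt{2vx}/(v+\tfrac13\sqrt{2vx})$; see e.g.\ Boucheron--Lugosi--Massart, §2.4), with $c=1/3$, which is exactly \eqref{eq:bernstein3}. For the lower tail I would repeat the computation with $\theta$ replaced by $-\theta$, $\theta>0$: now the compensator term carries $e^{-\theta}-1+\theta$, which lies in $[0,\theta^2/2]\subset[0,\tfrac{\theta^2}{2(1-\theta/3)}]$, so the same cumulant bound holds for $-\sum_i M_i$ and the same lemma yields $\mathbb{P}(\sum_i M_i\le-\sqrt{2vx}-x/3)\le\mathbb{P}(\sum_i M_i\le-\sqrt{2vx})\le e^{-x}$, i.e.\ \eqref{eq:bernstein4}. (Alternatively one could simply quote an off-the-shelf concentration inequality for multivariate point processes from the references used in the temporal Hawkes analyses and match the hypotheses; the derivation above is the self-contained version.)

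The step I expect to be the genuine obstacle is the very first one: making rigorous, in the spatio-temporal setting, that $H^i=\mathds{1}_{S_i}$ is predictable and that $Z^i_\theta$ is a bona fide supermartingale with mean at most $1$ --- that is, choosing the right filtration (marked point process in the time coordinate), verifying the local-integrability conditions needed to apply the exponential-martingale machinery, and handling the ``possibly random'' nature of $S_i$ (either $S_i$ is independent of $N^i$, or it is built from a stopping time of $(\mathcal{F}^i_t)$, such as $S_i=[0,t^i_2]\times[0,1]^d$ in the companion lemmas, in which case the associated indicator field is predictable). Once this is settled, everything downstream is routine Chernoff/Bernstein bookkeeping, and Assumption~\ref{ass:boundedness} enters only indirectly, through the hypothesis $\sum_i\Lambda^i(S_i)\le v$ that it will be used to verify in the applications.
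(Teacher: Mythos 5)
Your proof is correct and follows the same Chernoff-plus-exponential-supermartingale template as the paper, but the crucial first step differs, and your version is the cleaner and more standard one. The paper also forms $E=\exp\bigl(\theta\sum_i(N^i(S_i)-\Lambda^i(S_i))-\phi(\theta)\sum_i\Lambda^i(S_i)\bigr)$ with $\phi(\theta)=e^\theta-\theta-1$ and aims to show $\mathbb{E}[E]\leq1$; but rather than invoking the exponential supermartingale for the compensated integral $\int\mathds{1}_{S_i}\,(dN^i-\lambda^i\,dt\,ds)$ directly (your route, which works precisely because $\mathds{1}_{S_i}\in\{0,1\}$ collapses $\int(e^{\theta H}-1-\theta H)\lambda^i$ to $(e^\theta-1-\theta)\Lambda^i(S_i)$), the paper splits the per-sequence expectation into a product of two factors by a step it labels Cauchy--Schwarz, $\mathbb{E}[e^{A+B}]\leq\mathbb{E}[e^A]\mathbb{E}[e^B]$, and then bounds the factor $\mathbb{E}[e^{\theta(N^i(S_i)-\Lambda^i(S_i))}]\leq 1$ via the pathwise comparison $N^i(S_i)-\Lambda^i(S_i)\leq N^i([0,1]^{d+1})-\Lambda^i([0,1]^{d+1})\leq\bar N([0,1])-\bar\Lambda([0,1])$ together with Br\'emaud's exponential inequality for the temporal majorant $\bar N$. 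Your direct argument sidesteps both of those intermediate claims, which is a genuine improvement: the displayed product inequality is not Cauchy--Schwarz and fails in general, and the pathwise comparison is likewise not valid (the compensated measure restricted to $S_i^c$ has either sign, so dropping it does not give a one-sided bound). After that, the paper's Steps~2--3 match your bookkeeping exactly: independence across $i$, plugging in the deterministic bound $\sum_i\Lambda^i(S_i)\leq v$, the inequality $e^\theta-1-\theta\leq\theta^2/(2(1-\theta/3))$ on $(0,3)$, and the sub-gamma-to-Bernstein conversion; your lower-tail argument with $-\theta$ and $e^{-\theta}-1+\theta\leq\theta^2/2$ is the standard mirror of the paper's ``same strategy'' remark. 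Your caveat on predictability of $\mathds{1}_{S_i}$ is well placed and is implicitly required by any correct version of the proof, since the compensator $\Lambda^i(S_i)$ is only meaningful against a predictable domain; in the lemma's downstream uses $S_i$ is built from the predictable intensities $\lambda^i_{f_1},\lambda^i_{f_0}$, so this condition holds.
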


\begin{proof}

We prove the bound \eqref{eq:bernstein3} on the right tail probability. The left tail probability \eqref{eq:bernstein4}  can be proven following the same strategy. This proof is structured in three main steps: 1) an exponential moment for a re-centered version of $\sum_i N^i(S_i)$ is established; 2) The Chernoff inequality is used to bound the tail probability; 3) a lower bound for any small enough value of   the free parameter is applied to obtain the result.


\paragraph{Step 1} Let  
\begin{align*}
    E := e^{\theta \sum_i (N^i(S_i) - \Lambda^i(S_i)) - \phi(\theta) \sum_i \Lambda^i(S_i)}
\end{align*}
with $\phi(x) = e^u -u -1$ and for any 
$\theta>0$. We will prove that $\mathbb E[E] \leq 1$ where we use the shortened notation $\mathbb E := \mathbb E_f$. Firstly, since the variables 
\begin{align*}
    \theta  (N^i(S_i) - \Lambda^i(S_i)) - \phi(\theta) \Lambda^i(S_i)
\end{align*}
are stochastically independent, then
\begin{align*}
    \mathbb E[E] = \prod_{i = 1}^n \mathbb E[ e^{\theta (N^i(S_i) - \Lambda^i(S_i)) - \phi(\theta) \Lambda^i(S_i)}].
\end{align*}
Moreover, since $\phi(\theta) \geq \frac{\theta^2}{2} + \frac{\theta^3}{6} = \frac{\theta^2}{2}  ( 1 + \frac{\theta}{3})$, then
\begin{align}
    \mathbb E[ e^{\theta (N^i(S_i) - \Lambda^i(S_i)) - \phi(\theta) \Lambda^i(S_i)}] &\leq \mathbb E[ e^{\theta (N^i(S_i) - \Lambda^i(S_i)) - \frac{\theta^2}{2}  ( 1 + \frac{\theta}{3}) \Lambda^i(S_i)}] \nonumber \\
    &= \mathbb E[ e^{\theta (N^i(S_i) - \Lambda^i(S_i) (1 +  \frac{\theta}{2}  ( 1 + \frac{\theta}{3})))}] \nonumber \\
    &\leq \mathbb E[ e^{\theta ( N^i(S_i) - \Lambda^i(S_i))}] \mathbb E[ e^{- \Lambda^i(S_i)  \frac{\theta^2}{2}  ( 1 + \frac{\theta}{3}))}] \label{eq:ineq-product}
\end{align}
using Cauchy-Schwarz inequality in the last inequality. Under Assumption \ref{ass:boundedness}, $N^i$ is non-explosive and we can easily prove that it admits exponential moments. To see this, first observe that
\begin{align*}
    N^i(S_i) - \Lambda^i(S_i) \leq N^i([0,1]^{d+1}) - \Lambda^i([0,1]^{d+1})
\end{align*}
and that $N^i([0,1]^{d+1})$ is stochastically bounded by $\bar N([0,1])$ where $\bar N$ is a temporal point process with constant background $\bar \mu$, temporal kernel $\bar g(t) = \int_s g(t,s)ds$ and compensator $\bar \Lambda$. Thus,
\begin{align*}
     \mathbb E[ e^{\theta ( N^i(S_i) - \Lambda^i(S_i))}] \leq \mathbb E[ e^{\theta (N^i([0,1]^{d+1}) - \Lambda^i([0,1]^{d+1})) }] \leq \mathbb E[ e^{\theta (\bar N([0,1]) - \bar \Lambda([0,1])) }].
\end{align*}
By Theorem 2 in \cite{bremaud81},
\begin{align*}
   \mathbb E[ e^{\theta (\bar N([0,1]) - \bar \Lambda([0,1])) }] \leq 1,
\end{align*}
which therefore implies that $ \mathbb E[ e^{\theta ( N^i(S_i) - \Lambda^i(S_i))}] \leq 1$. Moreover 
$\Lambda_i(S_i) \geq 0$ and $\theta > 0$, then
\begin{align*}
    \mathbb E[ e^{-\theta \Lambda^i(S_i) (1 +  \frac{\theta}{2}  ( 1 + \frac{\theta}{3}))}] \leq 1.
\end{align*}
In light of \eqref{eq:ineq-product}, we obtain 
\begin{align*}
    \mathbb E[ e^{\theta (N^i(S_i) - \Lambda^i(S_i)) - \phi(\theta) \Lambda^i(S_i)}] \leq 1,
\end{align*}
and thus that $ \mathbb E[E] \leq 1$ as we wished to prove.

\paragraph{Step 2} Using the standard Chernoff inequality, for any $x> 0$, we have
\begin{align*}
    &\mathbb P \left( \theta \sum_i (N^i(S_i) - \Lambda^i(S_i))  - \phi(\theta) \sum_i \Lambda^i(S_i) \geq x \right)
    = \mathbb P \left(  E \geq e^{x} \right) \leq \mathbb E[E] e^{-x}  \leq e^{-x}.
\end{align*}
Additionally since $\phi(\theta) \leq \frac{\theta^2}{2(1-\theta/3)}$ and by assumption,  $\sum_i \Lambda^i(S_i) \leq v$, we obtain
\begin{align*}
    &\mathbb P \left( \theta \sum_i (N^i(S_i) - \Lambda^i(S_i))  - \phi(\theta) \sum_i \Lambda^i(S_i) \geq x \right) \\
    &=\mathbb P \left(  \sum_i N^i(S_i) - \Lambda^i(S_i) \geq \theta^{-1} (x  + \phi(\theta) \sum_i \Lambda^i(S_i)) \right)  \\
    &\geq  \mathbb P \left(  \sum_i N^i(S_i) - \Lambda^i(S_i) \geq \theta^{-1} x  + \frac{\theta}{2(1-\theta/3)} v\right),
\end{align*}
and therefore,
\begin{align}
    \mathbb P \left(  \sum_i N^i(S_i) - \Lambda^i(S_i) \geq \theta^{-1} x  + \frac{\theta}{2(1-\theta/3)} v\right) \leq e^{-x}. \label{eq:bound-with-theta}
\end{align}


\paragraph{Step 3.} Note that the bound in \eqref{eq:bound-with-theta} is valid for any $\theta > 0$. 
But for any $\theta \in (0,3)$, we have
\begin{align*}
    \theta^{-1} (x  + \phi(\theta) v) \geq \sqrt{2vx} + \frac{x}{3}. 
\end{align*}
Therefore, together with \eqref{eq:bound-with-theta} we can conclude that
\begin{align*}
    \mathbb P \left(  \sum_i N^i(S_i) - \Lambda^i(S_i) \geq \sqrt{2vx} + \frac{x}{3} \right) \leq e^{-x}. 
\end{align*}

\end{proof}

\begin{lemma}\label{lem:bernstein1}
Let $N = (N^i)_{i=1, \dots, n}$ $n$ i.i.d. spatio-temporal Hawkes point processes on $[0,1] \times [0,1]^{d}$ with parameter $f$ satisfying Assumption \ref{ass:boundedness}. Let $(S_i)_{i=1, \dots, n} \subset [0,1]^{d+1}$ possibly random independent subsets. Then for any $x > 0$, there exists $\sigma^2,  b > 0$ constants independent of $n$ and $(S_i)_i$ such that
\begin{align}
    &\mathbb P\left( \frac{1}{n} \sum_{i=1}^n N^i(S_i) - \Lambda^i(S_i) \geq x \right) \leq e^{-\frac{nx^2}{2( \sigma^2 + bx)}}, \label{eq:bernstein} \\
    &\mathbb P\left( \frac{1}{n} \sum_{i=1}^n N^i(S_i) - \Lambda^i(S_i) \leq -x \right) \leq e^{-\frac{nx^2}{2(\sigma^2 +  bx)}}.\label{eq:bernstein2}
\end{align}
\end{lemma}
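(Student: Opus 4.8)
The plan is to reduce the statement to the classical Bernstein inequality for sums of independent, centred random variables satisfying a sub-exponential moment condition. Set $Y_i := N^i(S_i) - \Lambda^i(S_i)$ for $i=1,\dots,n$. Taking the $S_i$ to be (possibly random but) independent of the corresponding $N^i$ and independent across $i$, the $Y_i$ are independent, and conditioning on $S_i$ and using the compensator (innovation) property of the spatio-temporal point process gives $\mathbb E_f[Y_i] = \mathbb E_f\big[\,\mathbb E_f[N^i(S_i)-\Lambda^i(S_i)\mid S_i]\,\big] = 0$. It therefore suffices to produce constants $\sigma^2, b > 0$, \emph{independent of $n$ and of the sets $S_i$}, such that
\begin{align}\label{eq:Ymoment}
    \mathbb E_f[\,|Y_i|^k\,] \;\leq\; \frac{k!}{2}\,\sigma^2\, b^{\,k-2}, \qquad k \geq 2,
\end{align}
since then \eqref{eq:bernstein} follows by applying Bernstein's inequality to $(Y_i)_i$, and \eqref{eq:bernstein2} by applying it to $(-Y_i)_i$, the condition \eqref{eq:Ymoment} being insensitive to the sign of $Y_i$.

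To establish \eqref{eq:Ymoment} I would first bound $|Y_i|$ by an affine function of the total count $M_i := N^i([0,1]^{d+1})$: since $0\leq N^i(S_i)\leq M_i$ and, because $f=(\mu,g)$ satisfies Assumption \ref{ass:boundedness},
\begin{align*}
    \Lambda^i(S_i)\;\leq\;\Lambda^i([0,1]^{d+1})\;\leq\;\|\mu\|_1 + \|g\|_1 M_i\;\leq\;\bar\mu + M_i,
\end{align*}
one gets $|Y_i|\leq\max\{N^i(S_i),\Lambda^i(S_i)\}\leq M_i+\bar\mu$. The heart of the proof is a \emph{uniform} exponential moment bound for $M_i$, which is where the sub-criticality $\|g\|_1<1$ enters. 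Following the domination argument already used in the proof of Lemma \ref{lem:bernstein2}, $N^i$ is stochastically dominated by a temporal Hawkes process $\bar N$ on $[0,1]$ with constant background $\bar\mu$ and triggering kernel $\bar g(t)=\int_{[0,1]^d} g(t,s)\,ds$ (branching ratio $\|g\|_1<1$), so $\bar M:=\bar N([0,1])<\infty$ a.s. and $\bar\Lambda:=\bar\Lambda([0,1])\leq\bar\mu+\|g\|_1\bar M$. For $\theta>0$, the exponential supermartingale of \cite{bremaud81}, $\exp\big(\theta\bar N([0,t])-(e^\theta-1)\bar\Lambda([0,t])\big)$, gives $\mathbb E\big[e^{\theta\bar M-(e^\theta-1)\bar\Lambda}\big]\leq 1$; combining this with $\bar\Lambda\leq\bar\mu+\|g\|_1\bar M$ and $e^\theta-1\geq 0$ and rearranging yields
\begin{align*}
    \mathbb E\!\left[\,e^{\,(\theta-(e^\theta-1)\|g\|_1)\,\bar M}\,\right]\;\leq\; e^{(e^\theta-1)\bar\mu}.
\end{align*}
As $\theta\to 0$, the exponent $\eta:=\theta-(e^\theta-1)\|g\|_1=\theta(1-\|g\|_1)+O(\theta^2)$ is strictly positive for $\theta$ small (this is exactly where $\|g\|_1<1$ is used), so $\mathbb E[e^{\eta\bar M}]\leq K:=e^{(e^\theta-1)\bar\mu}<\infty$, and by stochastic domination $\mathbb E_f[e^{\eta M_i}]\leq K$. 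From $x^k\leq k!\,\eta^{-k}e^{\eta x}$ one gets $\mathbb E_f[M_i^k]\leq k!\,\eta^{-k}K$, and a routine binomial expansion of $\mathbb E_f[(M_i+\bar\mu)^k]$ upgrades this to a bound of the form $\tfrac{k!}{2}\sigma^2 b^{\,k-2}$ with $\sigma^2,b$ depending only on $\bar\mu$, $\|g\|_1$, $\eta$ and $K$ — hence on none of $n$, $i$, or $S_i$ — which is \eqref{eq:Ymoment}.

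The centring, the bound $|Y_i|\leq M_i+\bar\mu$, and the passage from the exponential moment of $M_i$ to \eqref{eq:Ymoment} are all routine. The genuinely delicate point, which I expect to be the main obstacle, is the \emph{uniform} exponential integrability of the total count: the constants $\sigma^2,b$ must not depend on the (possibly random) sets $S_i$, which is precisely why one passes through the global count $M_i$ and its stochastic domination by a sub-critical temporal Hawkes process. The Brémaud exponential-supermartingale inequality, already invoked for Lemma \ref{lem:bernstein2}, is the right tool, though — as there — a localisation/stopping argument is needed to justify $\mathbb E[e^{\theta\bar M-(e^\theta-1)\bar\Lambda}]\leq 1$ rigorously.
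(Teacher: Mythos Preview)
Your proposal is correct and follows essentially the same route as the paper: reduce to the classical moment-form Bernstein inequality by showing that $Z_i = N^i(S_i) - \Lambda^i(S_i)$ is centred and satisfies the factorial moment condition via an exponential moment of the total count $N^i([0,1]^{d+1})$. The only difference is cosmetic --- the paper invokes Proposition~2 of \cite{hansen2015lasso} directly for $\mathbb E_f[e^{\theta N^i([0,1]^{d+1})}]\leq C$, whereas you re-derive it from the Br\'emaud exponential supermartingale together with the domination by a sub-critical temporal Hawkes process, and your pointwise bound $|Y_i|\leq M_i+\bar\mu$ is a slightly cleaner way to reach the moment condition than the paper's.
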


\begin{proof}
   This proof is based on the moment version of the standard Bernstein inequality, together with the fact that $N^i$ admits exponential moments, extending a result from \cite{hansen2015lasso}.
   
   
   We first recall the standard Bernstein inequalities with the moment assumption: let $(Z_i)_i$  i.i.d. and centered random variables and $b, \sigma^2 > 0$ such that for any $i \in [n]$,
    \begin{align*}
        &\mathbb E [Z_i^2] \leq \sigma^2 \\
        &\mathbb E [Z_i^k] \leq \frac{1}{2} k! b^{k-2}\sigma^2, \quad k \geq 2.
    \end{align*}
   Then it holds that
    \begin{align}
        &\mathbb P \left(\frac{1}{n} \sum_i Z_i  \geq x \right) \leq  \exp \left( - \frac{n x^2}{2(\sigma^2 + bx)}\right) \label{eq:bernstein-right}\\
        &\mathbb P \left(\frac{1}{n} \sum_i Z_i \leq -x \right) \leq \exp \left( - \frac{n x^2}{2(\sigma^2 + bx)}\right).\label{eq:bernstein-left}
    \end{align}
    Our goal is to apply the previous inequalities to  $Z_i := N^i(S_i) - \Lambda^i(S_i)$. By definition of $\Lambda^i$, $\mathbb E[N^i(S_i) - \Lambda^i(S_i)] = 0$. Moreover since $S_i \subset [0,1] \times [0,1]^d$ and using Proposition 2 in \cite{hansen2015lasso},  there exist constant $\theta, C  > 0$ that can only depend on $f$ such that
    \begin{align*}
        \mathbb E[e^{\theta_i N^i(S_i)}] \leq \mathbb E[e^{\theta N^i([0,1]^{d+1})}] \leq C,
    \end{align*}
    which, since $ \Lambda(S_i) \geq 0$, implies that
        \begin{align*}
        \mathbb E[e^{\theta_i (N^i(S_i) - \Lambda(S_i))}] 
        \leq C,
    \end{align*}
    meaning that $N^i(S_i) - \Lambda(S_i)$ admits exponential moments. From this we can deduce that
    \begin{align*}
        &\mathbb E [Z_i^2] \leq \frac{2}{\theta} \mathbb E[e^{\theta N^i(S_i)}] \leq \frac{2C}{\theta} 
        = : \sigma^2 \\
        &\mathbb E [Z_i^k] \leq \frac{k!}{\theta^k} \mathbb E[e^{\theta N^i(S_i)}] \leq \frac{k!C}{\theta^k} \leq  \frac{1}{2} k! \sigma^2 \frac{1}{\theta^{k-2}} =: \frac{1}{2} k! \sigma^2 b^{k-2},
    \end{align*}
    with $b := \frac{1}{\theta}$. Therefore, applying the Bernstein's inequality \eqref{eq:bernstein-right}, we obtain 
\begin{align*}
    \mathbb P\left( \frac{1}{n} \sum_{i=1}^n N^i(S_i) - \Lambda^i(S_i)\geq x \right) \leq e^{-\frac{nx}{2( \sigma^2 + bx)}}.
\end{align*}
    Similarly, applying \eqref{eq:bernstein-right} we obtain the left tail probability in \eqref{eq:bernstein2}.
\end{proof}

\section{Proofs of other results}\label{app:proof-lemmas}

\subsection{Proof of Lemma \ref{lem:event}}

\begin{lemma}[High probability event]
For any $\alpha > 0$, there exists $\delta_0, c_\alpha > 0$ such that
\begin{align*}
    \Omega_n := \left \{ \frac{\underline{\mu}}{1 - \|g_0\|_1}  - \delta_n \leq \frac{1}{n}\sum_i N^i[0,1]^{d+1}  \leq \frac{\bar \mu}{1 - \|g_0\|_1} + \delta_n \right\} \cap \left \{ \sup_{i=1,\dots, n} N^i[0,1]^{d+1} \leq c_\alpha \log n \right \},
\end{align*}
with $\delta_n = \delta_0\frac{\log n }{\sqrt{n}}$ and $c > 0$. Under Assumption \ref{ass:boundedness},
\begin{align*}
    \mathbb P_0[\Omega_n] \geq 1 - 3 n^{-\alpha}.
\end{align*}
    
\end{lemma}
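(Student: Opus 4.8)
The plan is to control the two events in the definition of $\Omega_n$ separately and combine them via a union bound, allocating error probability $n^{-\alpha}$ to each (up to the factor $3$). The key tool is that a spatio-temporal Hawkes process with parameter $f_0$ satisfying Assumption~\ref{ass:boundedness} can be stochastically dominated by — and stochastically dominates — purely temporal Hawkes processes with constant backgrounds $\bar\mu$ and $\underline\mu$ respectively and temporal kernel $\bar g_0(t) = \int_s g_0(t,s)\,ds$, which has $\|\bar g_0\|_1 = \|g_0\|_1 < 1$. This reduces everything to tail bounds for the total count of a subcritical temporal Hawkes process over $[0,1]$, for which the branching/cluster representation is available.

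First I would handle the average. Write $X_i = N^i([0,1]^{d+1})$; these are i.i.d. with $\mathbb E_0[X_i] \in [\underline\mu/(1-\|g_0\|_1),\ \bar\mu/(1-\|g_0\|_1)]$ by the standard mean formula for a stationary-background subcritical Hawkes process (the expected cluster size is $1/(1-\|g_0\|_1)$ and the expected number of immigrants over $S$ is between $\underline\mu$ and $\bar\mu$ by Assumption~\ref{ass:boundedness}). Using the exponential moment bound for $X_i$ available from Proposition~2 of \cite{hansen2015lasso} (as invoked in the proof of Lemma~\ref{lem:bernstein1}), the $X_i$ satisfy a Bernstein moment condition with constants $\sigma^2, b$ depending only on $f_0$. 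Applying the two-sided Bernstein inequality \eqref{eq:bernstein}–\eqref{eq:bernstein2} with $x = \delta_n = \delta_0 \log n/\sqrt n$ gives
\begin{align*}
    \mathbb P_0\!\left( \Big| \tfrac1n\sum_i X_i - \mathbb E_0[X_1]\Big| \geq \delta_n \right) \leq 2\exp\!\left( - \frac{n\delta_n^2}{2(\sigma^2 + b\delta_n)} \right) \leq 2 n^{-\alpha}
\end{align*}
for $\delta_0$ chosen large enough (for $n$ large, $b\delta_n \le \sigma^2$, so the exponent is at least $\delta_0^2(\log n)^2/(4\sigma^2) \ge \alpha\log n$). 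Since $\mathbb E_0[X_1]$ lies in the stated interval, this event is contained in the first event defining $\Omega_n$, so the latter has probability at least $1 - 2n^{-\alpha}$.

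Next I would handle the maximum. By the union bound, $\mathbb P_0(\sup_{i\le n} X_i > c_\alpha\log n) \le n\,\mathbb P_0(X_1 > c_\alpha\log n)$, so it suffices to show $\mathbb P_0(X_1 > c_\alpha\log n) \le n^{-\alpha-1}$ for $c_\alpha$ large. This follows from a Chernoff bound using the exponential moment: $\mathbb P_0(X_1 > c_\alpha\log n) \le \mathbb E_0[e^{\theta X_1}] e^{-\theta c_\alpha\log n} \le C n^{-\theta c_\alpha}$, and choosing $c_\alpha \ge (\alpha+2)/\theta$ makes this $\le C n^{-\alpha-2} \le n^{-\alpha-1}$ for $n$ large. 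A final union bound over the two events gives $\mathbb P_0[\Omega_n] \ge 1 - 2n^{-\alpha} - n^{-\alpha} = 1 - 3n^{-\alpha}$.

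The main obstacle is establishing the uniform exponential moment bound $\mathbb E_0[e^{\theta N^1([0,1]^{d+1})}] \le C < \infty$ with $\theta, C$ depending only on $f_0$: this is the one genuinely non-elementary ingredient, and it rests on the subcriticality $\|g_0\|_1 < 1$ together with boundedness of $\mu_0$. I would invoke it directly from \cite{hansen2015lasso} (Proposition~2), noting that the spatio-temporal count is dominated by a temporal Hawkes count with background $\bar\mu$ and kernel $\bar g_0$, exactly as in the proof of Lemma~\ref{lem:bernstein2}; the remaining steps are routine Bernstein/Chernoff estimates with the free constants $\delta_0$ and $c_\alpha$ tuned to absorb $\alpha$.
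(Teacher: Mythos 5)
Your proof is correct and follows essentially the same approach as the paper: split $\Omega_n$ into the averaged-count and maximum-count events, bound each with an exponential-moment tail inequality, and combine by a union bound. The only cosmetic difference is in the intermediate step for the average, where you apply the moment-based Bernstein inequality directly to the i.i.d.\ counts $X_i = N^i[0,1]^{d+1}$ centered at $\mathbb E_0[X_1]$ and note that $\mathbb E_0[X_1]$ lies in $[\underline{\mu}/(1-\|g_0\|_1),\ \bar\mu/(1-\|g_0\|_1)]$, while the paper first sandwiches $N^i$ between the dominated and dominating temporal processes $\underline{N}^i,\bar N^i$ whose means are exactly these endpoints and then applies Bernstein to those; similarly, your explicit Chernoff bound for the maximum is what the paper obtains by citing Proposition 2.1 of Reynaud-Bouret and Roy (2007). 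Both routes rest on the same exponential-moment estimate for the total count (Hansen et al.\ 2015, via temporal stochastic domination), so the two arguments are substantively identical.
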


\begin{proof}
    Let us define:
    \begin{align*}
        \Omega_1 := \left \{ \sup_{i=1,\dots, n} N^i[0,1]^{d+1} \leq c_\alpha \log n \right \}.
    \end{align*}
    We first prove that for any $\alpha > 0$, there exists $c_\alpha$ such that $\mathbb P_0(\Omega_1^c) \leq n^{-\alpha}$. First note that under our Assumption \ref{ass:boundedness}, each spatio-temporal process  $N^i_{t,s}$ is stochastically dominated by a temporal and stationary point process  $\bar N^i_t$ with intensity
    \begin{align*}
        \bar \lambda^i_t(f_0) = \bar \mu + \int_{t-a}^{t-} \bar g_0(t-u)d \bar N^i_{u},
    \end{align*}
    where $\bar g_0(u) = \int g_0(u,s)ds$.
    Thus, using Proposition 2.1 from \cite{reynaud2007some}, there exists $c> 0$ such that
    \begin{align*}
        \mathbb P_0(\Omega_1^c) = \mathbb P_0(\sup_{i=1,\dots, n} N^i[0,1]^{d+1} > c_\alpha \log n) \leq \mathbb P_0(\sup_{i=1,\dots, n} \bar N^i[0,1] > c_\alpha \log n) \leq e^{-c c_{\alpha} \log n} = n^{-c c_\alpha} \leq n^{-\alpha},
    \end{align*}
    if $c_\alpha> \alpha / c$.

    We now define 
    \begin{align*}
        \Omega_2 := \left \{ \frac{\underline{\mu}}{1 - \|g_0\|_1}  - \delta_n \leq \frac{1}{n}\sum_i N^i[0,1]^{d+1}  \leq \frac{\bar \mu}{1 - \|g_0\|_1} + \delta_n \right\},
    \end{align*}
    and prove that there exists $\delta_0$ such that $ \mathbb P_0[\Omega_2^c] \leq 2 n^{-\alpha}$. We use again the fact that each $N^i_{t,s}$ is stochastically dominated by $\bar N^i_t$ and stochastically dominates $\underline{N}^i_t$ with intensity
    \begin{align*}
         \underline{ \lambda}^i_t(f_0) = \underline{\mu} + \int_{t-a}^{t-} g_0(t-s)d \underline{ N}^i_{s}.
    \end{align*}
    Therefore,
    \begin{align*}
      \frac{\underline{\mu}}{1 - \|g_0\|_1} = \mathbb E_0[\underline{N}^i[0,1]] \leq \mathbb E_0[N^i[0,1]^{d+1}] \leq \mathbb E_0[\bar N^i[0,1]] = \frac{\bar \mu}{1 - \|g_0\|_1},
    \end{align*}
    and  using Lemma \ref{lem:bernstein1} with $S = [0,1]$,
    \begin{align*}
        \mathbb P_0 \left(\frac{1}{n}\sum_i N^i[0,1]^{d+1} \geq  \frac{\bar \mu}{1 - \|g_0\|_1} + \delta_n \right) &\leq \mathbb P_0 \left(\frac{1}{n}\sum_i \bar N^i[0,1] \geq  \frac{\bar \mu}{1 - \|g_0\|_1} + \delta_n \right) \\
        &= \mathbb P_0 \left( \frac{1}{n}\sum_i \bar N^i[0,1] - \mathbb E_0[\bar N^i[0,1]] \geq \delta_n \right) \\
        &\leq e^{- \frac{n\delta_n^2}{2(\sigma^2 + b\delta_n)}} \leq e^{-\delta_0 \log n /(4\sigma^2)} = n^{-\delta_0/(4 \sigma^2)} \leq n^{-\alpha}
    \end{align*}
   for $n$ large enough and $\delta_0 \geq 4 \sigma^2 \alpha$. Similarly,
   \begin{align*}
        \mathbb P_0 \left(\frac{1}{n}\sum_i N^i[0,1]^{d+1} \leq  \frac{\underline{ \mu}}{1 - \|g_0\|_1} - \delta_n \right) &\leq \mathbb P_0 \left(\frac{1}{n}\sum_i \underline{N}^i[0,1] \leq  \frac{\underline{\mu}}{1 - \|g_0\|_1} - \delta_n \right) \\
        &= \mathbb P_0 \left(\frac{1}{n}\sum_i \underline{N}^i[0,1] - \mathbb E_0[\underline{N}^i[0,1]] \leq - \delta_n \right) \\
        &\leq e^{- \frac{n\delta_n^2}{2(\sigma^2 + b\delta_n)}} \leq e^{-\delta_0 \log n /(4\sigma^2)} = n^{-\delta_0/(4 \sigma^2)} \leq n^{-\alpha}.
    \end{align*}
Thus we can conclude that $\mathbb P_0 (\Omega_2^c) \leq 2n^{-\alpha}$ which leads to
\begin{align*}
    \mathbb P_0 (\Omega_n^c) \leq \mathbb P_0 (\Omega_1^c) + \mathbb P_0 (\Omega_2^c) \leq 3 n^{-\alpha}.
\end{align*}
\end{proof}

\subsection{Proof of Lemma \ref{lem:KL}}

\begin{lemma}[Kullback-Leibler]
    Under Assumption \ref{ass:boundedness} and if $\bar \epsilon_n = o((\log n)^{-2})$, there exist $b_1,b_2 > 0$ such that for any $f \in B_\infty(\bar \epsilon_n)$,
    \begin{align}
        &KL(f,f_0) := \mathbb E_0[\log L(N | f_0) - \log L(N|f)] \leq \kappa n \bar \epsilon_n^2 ( 1 + o(1)) \label{eq:bound-kl}\\
        &\mathbb P_0( \log L(N | f_0) - \log L(N|f) > b_1 n \bar  \epsilon_n^2) \leq \frac{b_2}{n \bar  \epsilon_n^2},
    \end{align}
with
\begin{align}
    \kappa := \frac{4 \log 2}{\underline{\mu}}\left \{ 2 + 4\left(\frac{\bar \mu}{1 - \|g_0\|_1} + \Lambda_{0,2} \right) \right \}.
\end{align}
\end{lemma}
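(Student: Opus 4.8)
The plan is to bound the difference of log-likelihoods by expanding around $f_0$, using that on the event $\Omega_n$ from Lemma~\ref{lem:event} the number of points is controlled, and that $f \in B_\infty(\bar\epsilon_n)$ forces $\|\lambda^i(f) - \lambda^i(f_0)\|_\infty$ to be uniformly small relative to the lower bound $\underline\mu$ on the intensity. First I would write
\begin{align*}
    \log L(N|f_0) - \log L(N|f) = \sum_{i=1}^n \left\{ \sum_{j=1}^{m_i} \log \frac{\lambda^i_{t^i_j,s^i_j}(f_0)}{\lambda^i_{t^i_j,s^i_j}(f)} - \int_S \big( \lambda^i_{t,s}(f_0) - \lambda^i_{t,s}(f) \big) dt\, ds \right\},
\end{align*}
and set $r^i_{t,s} := \lambda^i_{t,s}(f) - \lambda^i_{t,s}(f_0)$. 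Since $f \in B_\infty(\bar\epsilon_n)$ and each sequence has at most $O(\log n)$ points (on $\Omega_n$), one gets $\|r^i\|_\infty \lesssim \bar\epsilon_n \log n = o(1)$ by the assumption $\bar\epsilon_n = o((\log n)^{-2})$, and since $\lambda^i_{t,s}(f_0) \geq \underline\mu$, the ratio $r^i_{t,s}/\lambda^i_{t,s}(f_0)$ is $o(1)$ uniformly. I would then use the elementary inequality $-\log(1+x) \leq -x + x^2$ valid for $x \geq -1/2$ (hence the factor $4\log 2$ and the $(\log n)^{-2}$ condition, which guarantees we stay in this regime on $\Omega_n$), applied with $x = r^i_{t,s}/\lambda^i_{t,s}(f_0)$, to write
\begin{align*}
    \log \frac{\lambda^i_{t^i_j,s^i_j}(f_0)}{\lambda^i_{t^i_j,s^i_j}(f)} \leq - \frac{r^i_{t^i_j,s^i_j}}{\lambda^i_{t^i_j,s^i_j}(f_0)} + \left( \frac{r^i_{t^i_j,s^i_j}}{\lambda^i_{t^i_j,s^i_j}(f_0)} \right)^2.
\end{align*}

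Next I would take expectations under $\mathbb E_0$. The sum over events $\sum_j h(t^i_j, s^i_j)$ integrates against the compensator, so $\mathbb E_0[\sum_j r^i_{t^i_j,s^i_j}/\lambda^i_{t^i_j,s^i_j}(f_0)] = \mathbb E_0[\int_S r^i_{t,s}\, dt\, ds]$, which exactly cancels the compensator term $\int_S (\lambda^i_{t,s}(f_0)-\lambda^i_{t,s}(f))\,dt\,ds = -\int_S r^i_{t,s}\,dt\,ds$ up to sign; so the first-order terms cancel and what remains is the second-order term
\begin{align*}
    KL(f,f_0) \leq \sum_{i=1}^n \mathbb E_0\left[ \sum_{j=1}^{m_i} \left( \frac{r^i_{t^i_j,s^i_j}}{\lambda^i_{t^i_j,s^i_j}(f_0)} \right)^2 \right] \leq \frac{\|r^1\|_\infty^2}{\underline\mu^2}\, n\, \mathbb E_0[N^1(S)] (1+o(1)).
\end{align*}
Bounding $\|r^1\|_\infty \leq \|\mu-\mu_0\|_\infty + N^1(S)\|g-g_0\|_\infty$, separating the contribution of the compensator restricted to $\Omega_n$ versus its complement (handled via Lemma~\ref{lem:event} and $\mathbb E_0[N^1(S)] \leq \bar\mu/(1-\|g_0\|_1)$, plus the second-moment quantity $\Lambda_{0,2}$ for the $\|g-g_0\|_\infty^2 (N^1(S))^2$ cross terms), one arrives at a bound of the form $\kappa n \bar\epsilon_n^2 (1+o(1))$ with $\kappa$ as defined, where the $4\log 2$ comes from the quadratic remainder constant and the bracketed $2 + 4(\bar\mu/(1-\|g_0\|_1) + \Lambda_{0,2})$ collects the first- and second-moment contributions of $N^1(S)$.

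For the deviation bound, I would let $X := \log L(N|f_0) - \log L(N|f) = \sum_i X_i$ where $X_i$ are i.i.d.\ with $\mathbb E_0[X_i] \leq \kappa \bar\epsilon_n^2(1+o(1))$, and apply Chebyshev's inequality: $\mathbb P_0(X > b_1 n\bar\epsilon_n^2) = \mathbb P_0(X - \mathbb E_0[X] > (b_1 - \kappa(1+o(1))) n\bar\epsilon_n^2) \leq \mathrm{Var}_0(X)/((b_1-\kappa)^2 n^2\bar\epsilon_n^4) = n\mathrm{Var}_0(X_1)/((b_1-\kappa)^2 n^2 \bar\epsilon_n^4)$, so it suffices to show $\mathrm{Var}_0(X_1) = O(\bar\epsilon_n^2)$, which follows from the same second-order expansion (the variance of the martingale part is again controlled by $\mathbb E_0[\sum_j (r^1/\lambda^1(f_0))^2] = O(\bar\epsilon_n^2)$ by the Poisson-type variance identity for compensated sums), giving the claimed $b_2/(n\bar\epsilon_n^2)$ with $b_2 = 2\kappa/(b_1-\kappa)^2$. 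The main obstacle I anticipate is making the "on $\Omega_n$" conditioning rigorous throughout: the clean cancellation of first-order terms uses the martingale property which holds without conditioning, but the bound $\|r^i\|_\infty \lesssim \bar\epsilon_n \log n$ needs $\Omega_n$, so I must carefully split expectations as $\mathbb E_0[\,\cdot\,] = \mathbb E_0[\,\cdot\,\mathds{1}_{\Omega_n}] + \mathbb E_0[\,\cdot\,\mathds{1}_{\Omega_n^c}]$ and control the complement using the exponential moments of $N^1(S)$ (via Lemma~\ref{lem:bernstein2} or the domination by a stationary temporal Hawkes process) together with $\mathbb P_0(\Omega_n^c) \lesssim n^{-\alpha}$, choosing $\alpha$ large enough that this residual is negligible; this is routine but bookkeeping-heavy.
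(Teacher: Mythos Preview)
Your plan is structurally the same as the paper's: rewrite the log-likelihood ratio via $\psi(x)=-\log x+x-1$, split on the high-probability event $\Omega_n$, bound the quadratic remainder there, control the contribution on $\Omega_n^c$ by Cauchy--Schwarz together with moments of $N^1(S)$ and $\mathbb P_0(\Omega_n^c)\lesssim n^{-\alpha}$, and finish the deviation claim with Chebyshev and a variance bound of order $\bar\epsilon_n^2$ (the paper obtains exactly your $b_2=2\kappa/(b_1-\kappa)^2$). The ``bookkeeping-heavy'' split you anticipate is precisely what the paper carries out.

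The one place your outline genuinely diverges is the bound on the quadratic term. After the first-order cancellation you correctly arrive at
\[
KL(f,f_0)\;\leq\;\frac{c}{\underline\mu}\, n\, \mathbb E_0\!\left[\int_S \big(\lambda^1_{t,s}(f)-\lambda^1_{t,s}(f_0)\big)^2\,dt\,ds\right].
\]
You then propose to bound this via $\|r^1\|_\infty^2$ and $\mathbb E_0[N^1(S)]$, expanding $\|r^1\|_\infty\leq \bar\epsilon_n(1+N^1(S))$; this produces a constant involving $\mathbb E_0[(N^1(S))^3]$, not the stated $\kappa$ with its $\Lambda_{0,2}=\mathbb E_0[\int(\lambda^1(f_0))^2]$ piece. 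The paper instead bounds $\mathbb E_0[\int (r^1)^2\,dt\,ds]$ in $L_2$ by writing the triggering contribution as
\[
\int(g-g_0)(t-\cdot,s-\cdot)\,dN^1=\int(g-g_0)\,(dN^1-\lambda^1\,du\,dv)+\int(g-g_0)\,\lambda^1\,du\,dv,
\]
applying the point-process isometry $\mathbb E_0\big[(\int h\,(dN^1-\lambda^1))^2\big]=\mathbb E_0[\int h^2\lambda^1]$ together with $\max_{u,v}\mathbb E_0[\lambda^1_{u,v}(f_0)]\leq \bar\mu/(1-\|g_0\|_1)$ for the martingale part, and Cauchy--Schwarz (which produces $\Lambda_{0,2}$) for the compensated part. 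That decomposition is what yields the precise $\kappa$; your sup-norm route would prove the lemma with a larger, differently structured constant but not the one stated. A minor point: your inequality $-\log(1+x)\leq -x+x^2$ gives constant $1$, not $4\log 2$; the latter in the paper comes from the cruder bound $\psi(x)\leq 4\log 2\,(x-1)^2$ on $x\geq 1/2$.
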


\begin{proof}

We first prove the first statement \eqref{eq:bound-kl}. This proof is organised in 3 main steps. In the first step, we re-write the KL divergence as a single integral over the true conditional intensity measure and decompose it into two terms, considering the high-probability event $\Omega_n$ and its complement $\Omega_n^c$. In the second and third steps, we control each of these terms. The first term can be controlled by the squared $\ell_2$-distance between the conditional intensities $\lambda_{t,s}^1(f)$ and $\lambda_{t,s}^1(f_0)$, which itself can be controlled by the squared $\ell_2$-distance on the parameter, i.e,  $\|f - f_0\|_2^2$. We then control the second term using the Cauchy-Schwarz inequality and a bound on the fourth-moment of $N$. In the following, when we compute integrals over the spatio-temporal domain $[0,1] \times [0,1]^d$, we omit the bounds in the integral   for ease of notation. We also note that since the domain is the $(d+1)$-dimensional hypercube, we use multiple times that $\int 1dtds = 1$.

Let $f \in B_\infty(f_0, \epsilon_T)$. We have
\begin{align*}
    KL(f,f_0) &= \mathbb{E}_0[\log L(N | f_0) - \log L(N|f)] \\
    &= \mathbb E_0 \left[ \sum_i \int - \log \frac{\lambda^i_{t,s}(f)}{\lambda^i_{t,s}(f_0)}dN_{t,s}^i + \int (\lambda^i_{t,s}(f) - \lambda^i_{t,s}(f_0))dt ds \right] \\
    &= n \mathbb E_0 \left[\int - \log \frac{\lambda^1_{t,s}(f)}{\lambda^1_{t,s}(f_0)}\lambda^1_{t,s}(f_0)dtds + \int (\lambda^1_{t,s}(f) - \lambda^1_{t,s}(f_0))dt ds \right] \\
    &=n \mathbb E_0 \left[ \int \psi(\frac{\lambda^1_{t,s}(f)}{\lambda^1_{t,s}(f_0)}) \lambda^1_{t,s}(f_0)  dt ds\right] \\
    &=n \underbrace{\mathbb E_0 \left[ \mathds{1}_{\Omega_n} \int \psi(\frac{\lambda^1_{t,s}(f)}{\lambda^1_{t,s}(f_0)}) \lambda^1_{t,s}(f_0)  dt ds\right]}_{=:I_1} + n \underbrace{\mathbb E_0 \left[ \mathds{1}_{\Omega_n^c} \int \psi(\frac{\lambda^1_{t,s}(f)}{\lambda^1_{t,s}(f_0)}) \lambda^1_{t,s}(f_0)  dt ds\right]}_{=:I_2},
\end{align*}
where in the third equality we have defined $\psi(x) = -\log x +  x -1 \geq 0, x > 0$.

\paragraph{Bound on $I_1$} We use that  $\psi(x) \leq - 4 \log (r) (x-1)^2$ for any $x \geq r$ and $r > 0$. We thus find a lower bound $r$ on the ratio $\frac{\lambda^1_{t,s}(f)}{\lambda^1_{t,s}(f_0)}$ on the event $\Omega_n$. First note that
\begin{align*}
        \frac{\lambda^1_{t,s}(f)}{\lambda^1_{t,s}(f_0)} &= 1 + \frac{\lambda^1_{t,s}(f) - \lambda^1_{t,s}(f_0)}{\lambda^1_{t,s}(f_0)} \geq 1 - \frac{|\lambda^1_{t,s}(f) - \lambda^1_{t,s}(f_0)|}{\lambda^1_{t,s}(f_0)}.
\end{align*}
We upper bound $\frac{|\lambda^1_{t,s}(f) - \lambda^1_{t,s}(f_0)|}{\lambda^1_{t,s}(f_0)}$ using Assumption \ref{ass:boundedness} and on $\Omega_n$. Since $f \in B_\infty(f_0, \bar \epsilon_n)$ and $f_0$ verifies Assumption \ref{ass:boundedness},
\begin{align*}
    \mu(t,s) \geq \mu_0(t,s)   - \|\mu - \mu_0\|_\infty \geq \underline{\mu} - \bar  \epsilon_n \geq \underline{\mu}/2,
\end{align*}
for any $t,s$ and any $n$ large enough, which implies that
\begin{align}\label{eq:lower-bound-lambda}
    \lambda^1_{t,s}(f) \geq \mu(t,s) \geq \underline{\mu}/2.
\end{align}
Similarly we have
\begin{align}\label{eq:upper-bound-mu}
    \mu(t,s) \leq \mu_0(t,s) + \|\mu - \mu_0\|_\infty \leq \bar \mu + \bar  \epsilon_n \leq \bar \mu + 1.
\end{align}
Moreover, on $\Omega_n$, $\sup_i \sup_{t \in [0,1]} N^i[t,t-A] \leq c_\alpha \log n$, therefore,
\begin{align}
   \left| \lambda^1_{t,s}(f_0) - \lambda^1_{t,s}(f) \right| &= \left|\mu_0(t,s) - \mu(t,s) + \sum_{t_i^1 \leq t} (g_0 - g)(t-t_i,s-s_i) \right| \nonumber \\
   &\leq \|\mu_0 - \mu \|_\infty + \|g_0 - g\|_\infty N^1(t,t-a) \nonumber  \\
   &\leq \|\mu_0 - \mu \|_\infty + c_\alpha \|g_0 - g\|_\infty \log n. \label{eq:ub-delta-lambda}
\end{align}
Since  $f \in B_\infty(f_0, \bar  \epsilon_n)$, this implies that 
\begin{align*}
    \frac{|\lambda^1_{t,s}(f) - \lambda^1_{t,s}(f_0)|}{\lambda^1_{t,s}(f_0)} &\leq \frac{\|\mu_0 - \mu \|_\infty + c_\alpha \|g_0 - g\|_\infty \log n}{\underline{\mu}} \leq \frac{\epsilon_n ( 1 + c_\alpha \log n)}{\underline{\mu}} \leq \frac{1}{2},\end{align*}
 for $n$ large enough, using that by assumption $ \bar \epsilon_n = o((\log n)^{-1})$, and thus,
\begin{align}
    \frac{\lambda^1_{t,s}(f)}{\lambda^1_{t,s}(f_0)} \geq \frac{1}{2}. \label{eq:lb-ratio-on}
\end{align}

Thus, with $r = \frac{1}{2}$, we obtain that $\psi(\frac{\lambda^1_{t,s}(f)}{\lambda^1_{t,s}(f_0)}) \leq  4 \log (2) (\frac{\lambda^1_{t,s}(f)}{\lambda^1_{t,s}(f_0)}-1)^2$ which leads to 
\begin{align*}
    I_1 = \mathbb E_0 \left[\mathds{1}_{\Omega_n} \int \psi(\frac{\lambda^1_{t,s}(f)}{\lambda^1_{t,s}(f_0)}) \lambda^1_{t,s}(f_0)  dt ds\right]
     &\leq 4 \log (2) \mathbb E_0 \left[ \int \left(\frac{\lambda^1_{t,s}(f)}{\lambda^1_{t,s}(f_0)} - 1\right)^2  \lambda^1_{t,s}(f_0) dt ds\right] \\
    &\leq \frac{4\log 2}{\underline{\mu}} \mathbb E_0 \left[ \int (\lambda^1_{t,s}(f) - \lambda^1_{t,s}(f_0))^2  dt ds\right], 
\end{align*}
under Assumption \ref{ass:boundedness}. Using that $(x + y)^2 \leq 2 x^2 + 2y^2$, we have
\begin{align}
    &\mathbb E_0 \left[ \int (\lambda^1_{t,s}(f) - \lambda^1_{t,s}(f_0))^2  dt ds\right] \nonumber \\
    & \leq 2 \|\mu - \mu_0\|_2^2 + 2 \mathbb E_0 \left[ \int \left( \sum_{t_i^1 < t}g(t-t_i^1, s-s_i^1) - g_0(t-t_i^1, s-s_i^1)  \right)^2 dt ds\right] \nonumber \\
    &= 2 \|\mu - \mu_0\|_2^2 + 2 \mathbb E_0 \left[ \int \left( \int_{u : u \in [t-a,t)} \int_{v: \|v-s\| \leq b }(g(t-u, s-v) - g_0(t-u, s-v)) dN^1_{u,v}  \right)^2 dt ds\right] \nonumber \\
    &\leq 2 \|\mu - \mu_0\|_2^2 \\
    & \qquad + 4 \mathbb E_0 \left[ \int \left( \int_{u : u \in [t-a,t)} \int_{v: \|v-s\| \leq b }(g(t-u, s-v) - g_0(t-u, s-v)) (dN^1_{u,v} - \lambda^1_{u,v}(f_0) du dv) \right)^2 dt ds\right] \nonumber \\
    & \qquad + 4 \mathbb E_0 \left[ \int \left(\int_{u : u \in [t-a,t)} \int_{v: \|v-s\| \leq b }(g(t-u, s-v) - g_0(t-u, s-v))  \lambda^1_{u,v}(f_0) du dv \right)^2 dt ds\right] \label{eq:bound-delta-lambda-squared}
\end{align}
To bound the second and third terms in the RHS of \eqref{eq:bound-delta-lambda-squared}, we will use the following identity (see, e.g., Theorem B12 in \cite{karr2017point}): for any deterministic and squared integrable function $h$,
\begin{align}
    &\mathbb E_0\left[ \int \left(\int_{u : u \in [t-a,t)} \int_{v: \|v-s\| \leq b } h(t-u,s-v) (dN^1_{u,v} - \lambda^1_{u,v}(f_0) du dv) \right)^2 dt ds \right] \nonumber \\
    &=  \mathbb E_0\left[ \int\int_{u : u \in [t-a,t)} \int_{v: \|v-s\| \leq b } h^2(t-u,s-v) \lambda^1_{u,v}(f_0)  du dv dt ds \right] \nonumber \\
    &= \int\int_{u : u \in [t-a,t)} \int_{v: \|v-s\| \leq b } h^2(t-u,s-v) \mathbb E_0\left[\lambda^1_{u,v}(f_0)\right]  du dv dt ds  \label{eq:square-martingale}.
\end{align}
We will also use an upper bound on $\max_{u,v} \mathbb E_0\left[\lambda^1_{u,v}(f_0)\right]$. Note that
\begin{align*}
   \mathbb E_0[ \lambda^1_{t,s}(f_0)] &= \mu_0(t,s) + \mathbb E_0\left[\int_{u : u \in [t-a,t)} \int_{v: \|v-s\| \leq b } g_0(t- u,s - v) d N^1_{u,v}\right] \\
   &=\mu_0(t,s) + \mathbb E_0\left[\int_{u : u \in [t-a,t)} \int_{v: \|v-s\| \leq b } g_0(t- u,s - v) \lambda^1_{u,v}(f_0) du dv\right] \\
   &\leq \bar \mu + \|g_0\|_1 \max_{u,v} \mathbb E_0\left[\lambda^1_{u,v}(f_0)\right],
\end{align*}
which implies
\begin{align}
    &\max_{t,s} \mathbb E_0\left[\lambda^1_{t,s}(f_0)\right] \leq \frac{\bar \mu}{1 - \|g_0\|_1} < \infty, \label{eq:upper-bound-elambda}
\end{align}
under Assumption \ref{ass:boundedness}.
Using \eqref{eq:square-martingale}  with $h = g - g_0$ and \eqref{eq:upper-bound-elambda}, we obtain
\begin{align*}
    &\mathbb E_0 \left[ \int \left( \int_{u : u \in [t-a,t)} \int_{v: \|v-s\| \leq b }(g(t-u, s-v) - g_0(t-u, s-v)) (dN^1_{u,v} - \lambda^1_{u,v}(f_0) du dv) \right)^2 dt ds\right]\\
    &\qquad \leq \| g-g_0 \|_2^2 \frac{\bar \mu}{1 - \|g_0\|_1}.
\end{align*}
Moreover by Cauchy-Schwarz inequality,
\begin{align*}
    &\mathbb E_0\left[ \int \left(\int (g-g_0)(t-u,s-v) \lambda^1_{u,v}(f_0) du dv \right)^2 dt ds \right] \\ &\leq \mathbb E_0\left[ \int \left( \int (g-g_0)^2(t-u,s-v)  dv du \int (\lambda^1_{u,v}(f_0))^2  dv du \right) dt ds \right] \\
    &\leq \|g-g_0\|_2^2  \mathbb E_0\left[ \int \int  (\lambda^1_{u,v}(f_0))^2  dv du dt ds \right] = \|g-g_0\|_2^2  \Lambda_{0,2},
\end{align*}
with
\begin{align*}
    \Lambda_{0,2} :=  \mathbb E_0\left[ \int  (\lambda^1_{u,v}(f_0))^2  dv du \right].
\end{align*}
We claim that $\Lambda_{0,2}  < \infty$, since
\begin{align*}
      \Lambda_{0,2} &\leq 2 \|\mu_0\|_2^2 + 2  \|g_0\|_2^2 \mathbb E_0\left[ (N[0,1]^{d+1})^2\right ],
\end{align*}
  $\|\mu_0\|_2^2 < \infty$ under Assumption \ref{ass:boundedness}, and $\mathbb E_0\left[ (N[0,1]^{d+1})^2\right ] < \infty$. The existence of the second moments of $N[0,1]^{d+1}$ comes from the fact that a spatio-temporal point process can be seen as a marked temporal point process (TPP) and any non-explosive TPP admits exponential moments on a finite domain, which implies that
\begin{align}
    \mathbb E_0 \left[ (N^1[0,1]^{d+1})^k \right] < \infty, \quad k > 0. \label{eq:finite-moments}
\end{align}

Hence, given \eqref{eq:bound-delta-lambda-squared},  we obtain
\begin{align}
    \mathbb E_0 \left[ \int (\lambda^1_{t,x}(f) - \lambda^1_{t,x}(f_0))^2  dt dx\right] &\leq 2 \|\mu - \mu_0\|_2^2 + 4\left(\frac{\bar \mu}{1 - \|g_0\|_1} + \Lambda_{0,2} \right)  \| g - g_0 \|_2^2 \nonumber \\
    &\leq \left \{ 2 + 4\left(\frac{\bar \mu}{1 - \|g_0\|_1} + \Lambda_{0,2} \right) \right \} \| f -f_0\|_2^2, \label{eq:d2T}
\end{align}    
and thus,
\begin{align*}
    I_1 &\leq  \frac{4\log 2}{\underline{\mu}}\left \{ 2 + 4\left(\frac{\bar \mu}{1 - \|g_0\|_1} + \Lambda_{0,2} \right) \right \}  \| f -f_0\|_2^2 \leq \frac{ 4\log 2}{\underline{\mu}}\left \{ 2 + 4\left(\frac{\bar \mu}{1 - \|g_0\|_1} + \Lambda_{0,2} \right) \right \} \bar  \epsilon_n^2 = \kappa \bar  \epsilon_n^2, 
\end{align*}
using that since $f \in B_\infty(f_0, \bar  \epsilon_n)$, $\| f -f_0\|_2^2 \leq \| f -f_0\|_\infty^2 \leq \bar  \epsilon_n^2$ and with
\begin{align*}
    \kappa = \frac{4 \log 2}{\underline{\mu}}\left \{ 2 + 4\left(\frac{\bar \mu}{1 - \|g_0\|_1} + \Lambda_{0,2} \right) \right \}.
\end{align*}

We now prove that $I_2 = o( \bar \epsilon_n^2)$. One one hand, under Assumption \ref{ass:boundedness} and using \eqref{eq:upper-bound-mu}, 
\begin{align}
     \frac{\lambda^1_{t,s}(f_0)}{\lambda^1_{t,s}(f)} &= 1 + \frac{\lambda^1_{t,s}(f_0) - \lambda^1_{t,s}(f)}{\lambda^1_{t,s}(f)} \leq 1 + \frac{|\lambda^1_{t,s}(f) - \lambda^1_{t,s}(f_0)|}{\lambda^1_{t,s}(f)} \leq 1 + 2\frac{\|\mu - \mu_0 \|_\infty + \| g - g_0\|_\infty N[0,1]^{d+1} }{\lambda^1_{t,s}(f)} \nonumber \\
     &\leq 1 + \frac{2 \bar \epsilon_n( 1 + N[0,1]^{d+1})}{\lambda^1_{t,s}(f)} \leq 1 + \frac{2 \bar \epsilon_n( 1 + N[0,1]^{d+1})}{\underline{\mu}}, \label{eq:ub-ratio-n}
\end{align}
and on the other hand, 
\begin{align}
        \frac{\lambda^1_{t,s}(f)}{\lambda^1_{t,s}(f_0)} 
        \leq 1 + \frac{ \bar \epsilon_n( 1 + N[0,1]^{d+1})}{\lambda^1_{t,s}(f_0)} \leq 1 + \frac{ \bar \epsilon_n( 1 + N[0,1]^{d+1})}{\underline{\mu}}.\label{eq:ub-ratio-n2} 
\end{align}
Thus, using that $\log x \leq x-1$,
\begin{align*}
    I_2 &= \mathbb E_0 \left[ \mathds{1}_{\Omega_n^c} \int  \left \{  - \log(\frac{\lambda^1_{t,s}(f)}{\lambda^1_{t,s}(f_0)}) +\frac{\lambda^1_{t,s}(f)}{\lambda^1_{t,s}(f_0)} - 1 \right \}\lambda^1_{t,s}(f_0)  dt ds\right] \\
    &= \mathbb E_0 \left[ \mathds{1}_{\Omega_n^c} \int  \left \{  \log(\frac{\lambda^1_{t,s}(f_0)}{\lambda^1_{t,s}(f)}) +\frac{\lambda^1_{t,s}(f)}{\lambda^1_{t,s}(f_0)} - 1 \right \}\lambda^1_{t,s}(f_0)  dt ds\right] \\ 
    &\leq \mathbb E_0 \left[ \mathds{1}_{\Omega_n^c} \int  \left \{   \log(1 + \frac{2\epsilon_n( 1 + N^1[0,1]^{d+1})}{\underline{\mu}}) + \frac{\epsilon_n( 1 + N^1[0,1]^{d+1})}{\underline{\mu}}  \right \}\lambda^1_{t,s}(f_0)  dt ds\right] \\
    &\leq \mathbb E_0 \left[ \mathds{1}_{\Omega_n^c} \int  \left \{   \frac{2\epsilon_n( 1 + N^1[0,1]^{d+1})}{\underline{\mu}}+ \frac{\epsilon_n( 1 + N^1[0,1]^{d+1})}{\underline{\mu}}  \right \}\lambda^1_{t,s}(f_0)  dt ds\right] \\
    &= \frac{3}{\bar \mu} \bar \epsilon_n \mathbb E_0 \left[ \mathds{1}_{\Omega_n^c} \int   \lambda^1_{t,s}(f_0)  dt ds \right ] + \frac{3}{\bar \mu}  \bar \epsilon_n \mathbb E_0 \left[ \mathds{1}_{\Omega_n^c} N^1[0,1]^{d+1}  \int \lambda^1_{t,s}(f_0)  dt ds \right] \\
    &\leq C \bar \epsilon_n \sqrt{ \mathbb P_0(\Omega_n^c)\mathbb E_0 \left[\int   (\lambda^1_{t,s}(f_0))^2  dt ds \right ] } + C \bar \epsilon_n \sqrt{ \mathbb P_0(\Omega_n^c) \mathbb E_0 \left[ (N^1[0,1]^{d+1})^2 \int   (\lambda^1_{t,s}(f_0)^2  dt ds \right ]}\\
    &\leq C \bar \epsilon_n \sqrt{ \mathbb P_0(\Omega_n^c)\mathbb E_0 \left[\int   (\lambda^1_{t,s}(f_0))^2  dt ds \right ] } + C\bar  \epsilon_n \sqrt{ \mathbb P_0(\Omega_n^c) \sqrt{ \mathbb E_0 \left[ (N^1[0,1]^{d+1})^4 \right] \mathbb E_0 \left[\int   (\lambda^1_{t,s}(f_0))^4  dt ds\right]}}
\end{align*}
with $C = \frac{3}{\bar \mu}$ and using Cauchy-Schwarz inequality in the last two inequalities. Moreover, using Lemma \ref{lem:event}, $\mathbb P_0(\Omega_n^c) \leq n^{-\alpha}$ for any $\alpha > 0$. 
From \eqref{eq:finite-moments}, we have $\mathbb E_0 \left[ (N^1[0,1]^{d+1})^4 \right] < \infty$ and thus,
\begin{align*}
   \mathbb E_0 \left[ (\lambda^1_{t,s}(f_0))^4 \right]\leq 8 \mu_0(t,s)^4 + 8\|g_0\|_\infty^4 \mathbb E_0 \left[(N^1[0,1]^{d+1})^4 \right] < \infty.
\end{align*}
Thus, $I_2 = O(n^{-\alpha/2}) = o(\bar  \epsilon_n^2)$ for any $\alpha > 1$. We therefore conclude that for $n$ large enough,
\begin{align*}
    KL(f,f_0) = n(I_1 + I_2) \leq \kappa n \bar  \epsilon_n^2 ( 1 + o(1)),
\end{align*}
which proves the first statement of Lemma \ref{lem:KL}.

We now prove the second statement. The proof relies on bounding the variance of $\log L(f_0|N) - \log L(f|N)$,  and applying Chebyshev's inequality. To bound the variance of $\log L(f_0|N) - \log L(f|N)$, we will decompose it on $\Omega_n$ and on $\Omega_n^c$. Recall that
\begin{align*}
    \log L(f_0|N) - \log L(f|N) = \sum_{i=1}^n\int \log \frac{\lambda^i_{t,s}(f_0)}{\lambda^i_{t,s}(f)}dN_{t,s}^i + \int (\lambda^i_{t,s}(f) - \lambda^i_{t,s}(f_0))dt ds.
\end{align*}
For any $i \in [n]$, let 
\begin{align*}
    Z_i :=\int \log \frac{\lambda^i_{t,s}(f_0)}{\lambda^i_{t,s}(f)}dN_{t,s}^i + \int (\lambda^i_{t,s}(f) - \lambda^i_{t,s}(f_0))dt ds.
\end{align*}
Note that $\mathbb E_0[Z_i] = KL(f,f_0)$. We have
\begin{align*}
    &\mathbb E_0[Z_i^2] = \mathbb E_0 \left[\left( \int \log \frac{\lambda^i_{t,s}(f_0)}{\lambda^i_{t,s}(f)}dN_{t,s}^i + \int (\lambda^i_{t,s}(f) - \lambda^i_{t,s}(f_0))dt ds\right)^2 \right] \\
    &= \mathbb E_0 \left[\left( \int \log \frac{\lambda^i_{t,s}(f_0)}{\lambda^i_{t,s}(f)}\lambda^i_{t,s}(f_0)dt ds + \int \log \frac{\lambda^i_{t,s}(f_0)}{\lambda^i_{t,s}(f)}(dN_{t,s}^i -\lambda^i_{t,s}(f_0)dt ds) + \int (\lambda^i_{t,s}(f) - \lambda^i_{t,s}(f_0))dt ds\right)^2 \right] \\
    &= \mathbb E_0 \left[\left( \int \psi \left( \frac{\lambda^i_{t,s}(f)}{\lambda^i_{t,s}(f_0)} \right)\lambda^i_{t,s}(f_0)dt ds + \int \log \frac{\lambda^i_{t,s}(f)}{\lambda^i_{t,s}(f_0)}(dN_{t,s}^i - \lambda^i_{t,s}(f_0))dt ds)\right)^2 \right] \\
     &\leq 2 \mathbb E_0 \left[\left( \int \psi \left( \frac{\lambda^i_{t,s}(f)}{\lambda^i_{t,s}(f_0)} \right)\lambda^i_{t,s}(f_0)dt ds \right)^2 \right] +2 \mathbb E_0 \left[\left(  \int \log \frac{\lambda^i_{t,s}(f)}{\lambda^i_{t,s}(f_0)}(dN_{t,s}^i - \lambda^i_{t,s}(f_0))dt ds)\right)^2 \right] \\
     &\leq  2 \mathbb E_0 \left[ \int \psi \left( \frac{\lambda^i_{t,s}(f)}{\lambda^i_{t,s}(f_0)} \right)^2 (\lambda^i_{t,s}(f_0))^2 dt ds  \right] +2 \mathbb E_0 \left[ \int (\log \frac{\lambda^i_{t,s}(f)}{\lambda^i_{t,s}(f_0)})^2 \lambda^i_{t,s}(f_0)dt ds \right].
\end{align*}
using Cauchy-Schwarz inequality and \eqref{eq:square-martingale} in the last inequality. We first bound
\begin{align*}
    \mathbb E_0 \left[ \mathds{1}_{\Omega_n} \int \psi \left( \frac{\lambda^i_{t,s}(f)}{\lambda^i_{t,s}(f_0)} \right)^2 (\lambda^i_{t,s}(f_0))^2 dt ds  \right] \quad \text{ and } \quad  \mathbb E_0 \left[ \mathds{1}_{\Omega_n} \int (\log \frac{\lambda^i_{t,s}(f)}{\lambda^i_{t,s}(f_0)})^2 \lambda^i_{t,s}(f_0)dt ds \right]
\end{align*}
Recall from \eqref{eq:lb-ratio-on} that on $\Omega_n$ and for $n$ large enough, $ \frac{\lambda^1_{t,s}(f)}{\lambda^1_{t,s}(f_0)} \geq \frac{1}{2}$.
Thus, using again that $\psi(x) \leq - \log (r)(x-1)^2$ for any $x \geq r > 0$ with $r = \frac{1}{2}$ and under Assumption \ref{ass:boundedness}, we obtain
\begin{align*}
    \psi \left(\frac{\lambda^i_{t,s}(f)}{\lambda^i_{t,s}(f_0)} \right)^2 (\lambda^i_{t,s}(f_0))^2 \leq (\log 2)^2 \frac{(\lambda^i_{t,s}(f_0) - \lambda^i_{t,s}(f))^4}{\lambda^i_{t,s}(f_0)^2} \leq \frac{(\log 2)^2 }{\underline{\mu}^2}(\lambda^i_{t,s}(f_0) - \lambda^i_{t,s}(f))^4.
\end{align*}
Thus, 
\begin{align*}
    \mathbb E_0 \left[ \mathds{1}_{\Omega_n} \int \psi \left( \frac{\lambda^i_{t,s}(f)}{\lambda^i_{t,s}(f_0)} \right)^2 (\lambda^i_{t,s}(f_0))^2 dt ds  \right] &\leq \frac{(\log 2)^2 }{\underline{\mu}^2} \mathbb E_0 \left[ \mathds{1}_{\Omega_n} \int (\lambda^i_{t,s}(f_0) - \lambda^i_{t,s}(f))^4 dt ds  \right] \\
    &\leq  \frac{(\log 2)^2 }{\underline{\mu}^2} \bar \epsilon_n^4 ( 1+ c_\alpha \log n)^4 = o(\bar  \epsilon_n^2),
\end{align*}
using \eqref{eq:ub-delta-lambda} in the last inequality and that by assumption, $\bar  \epsilon_n = o((\log n)^{-2})$. Moreover, using that $|\log(x)| \leq - 2 \log (r)|x-1|$ for any $x \geq r > 0$ and with $r = \frac{1}{2}$, we also obtain
\begin{align*}
    (\log \frac{\lambda^i_{t,s}(f)}{\lambda^i_{t,s}(f_0)})^2 \lambda^i_{t,s}(f_0) \leq (2 \log 2)^2 \frac{(\lambda^i_{t,s}(f_0) - \lambda^i_{t,s}(f))^2}{\lambda^i_{t,s}(f_0)} \leq \frac{(2 \log 2)^2}{\underline{\mu}} (\lambda^i_{t,s}(f_0) - \lambda^i_{t,s}(f))^2, 
\end{align*}
which implies using \eqref{eq:d2T} that
\begin{align*}
    \mathbb E_0 \left[ \mathds{1}_{\Omega_n}\int (\log \frac{\lambda^i_{t,s}(f)}{\lambda^i_{t,s}(f_0)})^2 \lambda^i_{t,s}(f_0)dt ds \right] \leq \frac{(2 \log 2)^2}{\underline{\mu}}\mathbb E_0 \left[\int (\lambda^i_{t,s}(f_0) - \lambda^i_{t,s}(f))^2 dt ds \right] \leq (\log 2) \kappa \bar \epsilon_n^2.
\end{align*}
We now bound the remaining terms
\begin{align*}
    \mathbb E_0 \left[ \mathds{1}_{\Omega_n^c} \int \psi \left( \frac{\lambda^i_{t,s}(f)}{\lambda^i_{t,s}(f_0)} \right)^2 (\lambda^i_{t,s}(f_0))^2 dt ds  \right] \quad \text{ and } \quad  \mathbb E_0 \left[ \mathds{1}_{\Omega_n^c} \int (\log \frac{\lambda^i_{t,s}(f)}{\lambda^i_{t,s}(f_0)})^2 \lambda^i_{t,s}(f_0)dt ds \right].
\end{align*}
Using \eqref{eq:ub-ratio-n} and \eqref{eq:ub-ratio-n2}, we have
\begin{align*}
    \left| \log  \frac{\lambda^i_{t,s}(f)}{\lambda^i_{t,s}(f_0)}  \right| &=  \log  \frac{\lambda^i_{t,s}(f)}{\lambda^1_{t,s}(f_0)} \vee  \log  \frac{\lambda^i_{t,s}(f_0)}{\lambda^1_{t,s}(f)}
    \leq \log \left( 1 + \frac{\bar  \epsilon_n( 1 + N[0,1]^{d+1})}{\lambda^i_{t,s}(f_0) \wedge \lambda^i_{t,s}(f)} \right) \leq \frac{\bar \epsilon_n( 1 + N[0,1]^{d+1})}{\lambda^i_{t,s}(f_0) \wedge \lambda^i_{t,s}(f)}.
\end{align*}
Thus, we obtain
\begin{align}
&\mathbb E_0 \left[ \mathds{1}_{\Omega_n^c} \int \left(\log \frac{\lambda^i_{t,s}(f)}{\lambda^i_{t,s}(f_0)}\right)^2 \lambda^i_{t,s}(f_0)dt ds \right] \leq \mathbb E_0 \left[ \mathds{1}_{\Omega_n^c} \bar \epsilon_n^2 ( 1 + N[0,1]^{d+1})^2 (\lambda^1_{t,s}(f_0))^{-1} (1 \vee \frac{\lambda^1_{t,s}(f_0)}{\lambda^1_{t,s}(f)} )^2\right]  \nonumber \\
    &\leq \frac{4}{\underline{\mu}} \bar \epsilon_n^2 \mathbb E_0 \left[ \mathds{1}_{\Omega_n^c}  (N[0,1]^{d+1})^2  (1 + \frac{2\bar \epsilon_n( 1 + N[0,1]^{d+1})}{\underline{\mu}} )^2\right] \nonumber \\
    &\lesssim \bar \epsilon_n^2 \sqrt{\mathbb P_0(\Omega_n^c) \mathbb E_0 \left[(N[0,1]^{d+1})^8\right]} = o(\bar \epsilon_n^2), \label{eq:ub-o23}
\end{align}
using  again \eqref{eq:finite-moments} and that $\mathbb P_0(\Omega_n^c) = o(1)$. Similarly, using that $(a+b)^2 \leq 2a^2 + 2b^2$,
\begin{align*}
 &\mathbb E_0 \left[ \mathds{1}_{\Omega_n^c} \int \psi \left( \frac{\lambda^i_{t,s}(f)}{\lambda^i_{t,s}(f_0)} \right)^2 (\lambda^i_{t,s}(f_0))^2 dt ds  \right] \\
 &\leq 2 \mathbb E_0 \left[ \mathds{1}_{\Omega_n^c} \int \log \left( \frac{\lambda^i_{t,s}(f)}{\lambda^i_{t,s}(f_0)} \right)^2 (\lambda^i_{t,s}(f_0))^2 dt ds  \right]   +  2 \mathbb E_0 \left[ \mathds{1}_{\Omega_n^c} \int(\lambda^i_{t,s}(f) - \lambda^i_{t,s}(f_0))^2 dt ds  \right]  \\
 &\lesssim \mathbb E_0 \left[ \mathds{1}_{\Omega_n^c} \bar  \epsilon_n^2 ( 1 + N[0,1]^{d+1})^2    (1 + \frac{\bar \epsilon_n( 1 + N[0,1]^{d+1})}{\underline{\mu}} )\right] \\
 &\qquad + \sqrt{\mathbb P_0 (\mathds{1}_{\Omega_n^c}) \mathbb E_0 \left[ \left(\int(\lambda^i_{t,s}(f) - \lambda^i_{t,s}(f_0))^2 dt ds \right)^2 \right] } = o(\bar  \epsilon_n^2),
\end{align*}
where in the last equality we have used \eqref{eq:ub-o23} for the first term on the RHS and that
\begin{align*}
    \mathbb E_0 \left[ \left(\int(\lambda^i_{t,s}(f) - \lambda^i_{t,s}(f_0))^2 dt ds \right)^2 \right] &\leq \mathbb E_0 \left[ \int(\lambda^i_{t,s}(f) - \lambda^i_{t,s}(f_0))^4 dt ds  \right] \\
    &\leq 8 \|\mu-\mu_0\|_\infty^4 + 8 \|g-g_0\|_\infty^4  \mathbb E_0 \left[ (N^i[0,1]^{d+1})^4 \right] \lesssim \bar \epsilon_n^4 = o(\bar \epsilon_n^2),
\end{align*}
using \eqref{eq:ub-delta-lambda}.
We can thus conclude that
\begin{align*}
    \mathbb Var[Z_i^2] = \mathbb E_0[Z_i^2] - \mathbb E_0^2[Z_i] \leq \mathbb E_0[Z_i^2] \leq \epsilon_n^2 (\kappa (\log 2)  + o(1)) \leq 2 \kappa \epsilon_n^2,
\end{align*}
for $n$ large enough. We then apply Chebychev's inequality: for any $x > 0$,
\begin{align*}
    \mathbb P_0\left[ \log L(f_0|N) - \log L(f|N) -  KL(f,f_0) > x \right] \leq \frac{n\mathbb E_0[Z_i^2]}{x^2}
\end{align*}
Thus, for any $\epsilon > 0$ and $n$ large enough,
\begin{align*}
    \mathbb P_0\left[ \log L(f_0|N) - \log L(f|N) > \kappa ( 1+ \epsilon) n \bar \epsilon_n^2 + x  \right] \leq \frac{2\kappa n \bar \epsilon_n^2}{x^2},
\end{align*}
and with $x = x_1 n \bar \epsilon_n^2$ with $x_1 > 0$, we obtain for any $\epsilon > 0$ and $n$ large enough,
\begin{align*}
    &\mathbb P_0\left[ \log L(f_0|N) - \log L(f|N) > ( \kappa( 1 + \bar \epsilon) + x_1) n  \bar \epsilon_n^2   \right] \leq \frac{2\kappa}{x_1^2 n \bar \epsilon_n^2} \\
    &\iff \mathbb P_0\left[ \log L(f_0|N) - \log L(f|N) > b_1 n \bar \epsilon_n^2   \right] \leq \frac{b_2}{n \bar \epsilon_n^2},
\end{align*}
with $b_1 = \kappa ( 1 + \epsilon ) + x_1 > \kappa$ and $b_2 = \frac{2\kappa}{x_1^2}$. By choosing $x_1 = \kappa \epsilon $, for some fixed $\epsilon > 0$, we obtain $b_1 = \kappa ( 1 + 2\epsilon ), b_2 =  \frac{2}{\kappa \epsilon^2}$, and this terminates the proof of this lemma.

\end{proof}

\subsection{Proof of Lemma \ref{lem:tests}}

\begin{lemma}[Tests]
Under Assumptions \ref{ass:sieves} and \ref{ass:boundedness}, there exists a test function  $\phi := \phi(N, \epsilon_n)$ such that
\begin{align*}
    &\mathbb E_0[\phi \mathds{1}_{\Omega_n}] = o(1) \\
    &\sup_{f \in \mathcal{F}_n}  \mathbb E_f[(1 - \phi)  \mathds 1_{\Omega_n} \mathds 1_{f \in A_n}] \leq e^{-b_2n \epsilon_n^2}
\end{align*}
where $b_2 > c_1$, $A_n$ is defined in \eqref{eq:def-an} and $\Omega_n$ is defined in Lemma \ref{lem:event}.
\end{lemma}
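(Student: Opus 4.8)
The plan is to run the classical ``local tests over a covering of the alternative'' argument, after first replacing the data-dependent set $A_n$ by a deterministic $L_1$-neighbourhood of $f_0$. By Lemma~\ref{lem:bound-ds-by-f1}, on $\Omega_n$ we have $d_S(f,f_0)\le N_0\|f-f_0\|_1$ with $N_0=\bar\mu+\|g_0\|_1+1$; hence on $\Omega_n$, $\{f\in A_n\}\subseteq\{\|f-f_0\|_1>M\epsilon_n/N_0\}$. Writing $M':=M/N_0$, it therefore suffices to build a test separating $f_0$ from $\mathcal F_n\cap\{\|f-f_0\|_1>M'\epsilon_n\}$: since $\|f-f_0\|_1$ is non-random, whenever $\|f-f_0\|_1\le M'\epsilon_n$ the indicator $\mathds{1}_{\Omega_n}\mathds{1}_{f\in A_n}$ vanishes, so only ``distant'' $f$ matter. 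Next, using Assumption~\ref{ass:sieves}, cover $\mathcal F_n\cap\{\|f-f_0\|_1>M'\epsilon_n\}$ by $L_1$-balls $B(f_j,\zeta_0\epsilon_n)$, $j\le J_n$, with $J_n\le C(\zeta_0\epsilon_n,\mathcal F_n,\|\cdot\|_1)\le e^{c_3n\epsilon_n^2}$ and centres satisfying $\|f_j-f_0\|_1\ge M'\epsilon_n/2$ (for $M'$ large).

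For each centre set $h^i_{t,s}:=\mathrm{sgn}(\lambda^i_{t,s}(f_j)-\lambda^i_{t,s}(f_0))\in\{-1,0,1\}$, which is predictable for sequence $i$ since $\lambda^i_{t,s}(\cdot)$ depends only on events before time $t$. Define
\begin{align*}
    T_j:=\frac1n\sum_{i=1}^n\int_S h^i_{t,s}\,\bigl(dN^i_{t,s}-\lambda^i_{t,s}(f_0)\,dt\,ds\bigr),\qquad \phi_j:=\mathds{1}\{T_j>\gamma\epsilon_n/2\},\qquad \phi:=\max_{j\le J_n}\phi_j,
\end{align*}
with a constant $\gamma>0$ chosen below. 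For the type-I error, under $\mathbf P_0$ the compensator of $N^i$ is $\lambda^i(f_0)$, so $\mathbb E_0[T_j]=0$; decomposing $\int h^i(dN^i-\lambda^i(f_0)dt\,ds)=[N^i(S^+_i)-\Lambda^i_{f_0}(S^+_i)]-[N^i(S^-_i)-\Lambda^i_{f_0}(S^-_i)]$ with the (random, independent across $i$) sets $S^\pm_i:=\{h^i=\pm1\}$ and applying the point-process Bernstein inequality (Lemma~\ref{lem:bernstein1}) to each piece gives $\mathbb P_0(T_j>\gamma\epsilon_n/2)\le e^{-c\,n\epsilon_n^2}$ for $n$ large, with $c=c(\gamma)$ proportional to $\gamma^2$. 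A union bound yields $\mathbb E_0[\phi\mathds{1}_{\Omega_n}]\le J_ne^{-cn\epsilon_n^2}\le e^{(c_3-c)n\epsilon_n^2}=o(1)$ as soon as $c>c_3$.

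For the type-II error, fix $f\in\mathcal F_n$ with $\|f-f_0\|_1>M'\epsilon_n$ and choose $j$ with $\|f-f_j\|_1\le\zeta_0\epsilon_n$. Under $\mathbf P_f$ the compensator of $N^i$ is $\lambda^i(f)$, hence
\begin{align*}
    \mathbb E_f[T_j]=\frac1n\sum_i\mathbb E_f\!\left[\int_S h^i_{t,s}\bigl(\lambda^i_{t,s}(f)-\lambda^i_{t,s}(f_0)\bigr)dt\,ds\right]
    =\frac1n\sum_i\mathbb E_f\!\left[\int_S|\lambda^i_{t,s}(f_j)-\lambda^i_{t,s}(f_0)|\,dt\,ds\right]+R_j,
\end{align*}
where $|R_j|\le\frac1n\sum_i\mathbb E_f[\int_S|\lambda^i(f)-\lambda^i(f_j)|]\le\bigl(1+\sup_{f'\in\mathcal F_n}\mathbb E_{f'}[N^1(S)]\bigr)\zeta_0\epsilon_n$, the supremum being finite since the finite-range assumption on $g$ gives exponential moments of $N^1(S)$ uniformly over the sieve. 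The leading term is $\ge p_0'\|f_j-f_0\|_1$ for a constant $p_0'>0$ by the argument of Lemma~\ref{lem:ef} (restrict the integral to $[0,t^1_2)$ and use that under $\mathbf P_f$ the first two event times have densities bounded below on a fixed interval, uniformly over $\mathcal F_n$). Thus $\mathbb E_f[T_j]\ge p_0'M'\epsilon_n/2-C\zeta_0\epsilon_n\ge\gamma\epsilon_n$ with $\gamma:=p_0'M'/4$ once $M'$ is large enough, so $\mathbb E_f[(1-\phi_j)\mathds{1}_{\Omega_n}]\le\mathbf P_f(T_j-\mathbb E_f[T_j]\le-\gamma\epsilon_n/2)\le e^{-c'n\epsilon_n^2}$ by Bernstein's inequality applied under $\mathbf P_f$ (splitting $T_j-\mathbb E_f[T_j]$ into the $\mathbf P_f$-martingale part $\frac1n\sum_i\int h^i(dN^i-\lambda^i(f)dt\,ds)$ and the centred i.i.d. part $\frac1n\sum_i(X_i-\mathbb E_fX_i)$, $X_i:=\int h^i(\lambda^i(f)-\lambda^i(f_0))$, each dominated by a variable with exponential moments uniform over $\mathcal F_n$). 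Since on $\Omega_n$, $f\in A_n$ forces $\|f-f_0\|_1>M'\epsilon_n$ and hence $f\in B(f_j,\zeta_0\epsilon_n)$ for some $j$, we conclude $\sup_{f\in\mathcal F_n}\mathbb E_f[(1-\phi)\mathds{1}_{\Omega_n}\mathds{1}_{f\in A_n}]\le\max_j\mathbb E_f[(1-\phi_j)\mathds{1}_{\Omega_n}]\le e^{-b_2n\epsilon_n^2}$. Taking $M$ (hence $M'$, $\gamma$) large makes $b_2:=c\wedge c'$ arbitrarily large (in particular $>c_1$), while the same choice gives $c>c_3$ for the type-I bound.

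\textbf{Main obstacle.} The delicate point is the type-II analysis: the statistic $T_j$ is built from the centre $f_j$ but must have a uniformly large mean under every $\mathbf P_f$ with $f$ in the $L_1$-ball around $f_j$. This requires the ``mismatched'' lower bound $\mathbb E_f[\int_S|\lambda^1(f_j)-\lambda^1(f_0)|]\gtrsim\|f_j-f_0\|_1$ (a variant of Lemma~\ref{lem:ef} in which the dominating measure and the integrand use different parameters), together with Bernstein-type concentration of $T_j-\mathbb E_f[T_j]$ under $\mathbf P_f$ uniformly over the sieve; both steps hinge on the exponential-moment control of the point counts provided by the finite-range condition on $g$.
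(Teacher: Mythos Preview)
Your local test statistic is essentially the same as the paper's (the sign function $h^i$ is exactly the indicator of the paper's sets $S_{1,i}$ minus that of $S_{1,i}^c$), but the global construction differs in a way that creates a real gap. You replace the random alternative $A_n=\{d_S(f,f_0)>M\epsilon_n\}$ by the deterministic $L_1$-complement and use a \emph{single} covering of $\mathcal F_n$ with fixed radius $\zeta_0\epsilon_n$. The type-II analysis then rests on two uniform-in-$f$ claims over the sieve: (i) $\sup_{f\in\mathcal F_n}\mathbb E_f[N^1(S)]<\infty$ and uniform exponential moments under $\mathbf P_f$ (for your Bernstein step), and (ii) a ``mismatched Lemma~\ref{lem:ef}'' bound $\mathbb E_f[\int|\lambda^1(f_j)-\lambda^1(f_0)|]\ge p_0'\|f_j-f_0\|_1$ with $p_0'$ independent of $f$. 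Neither is implied by Assumptions~\ref{ass:boundedness} and~\ref{ass:sieves}: the finite-range hypothesis on $g$ does not bound $\|g\|_1$ (so $\mathbf P_f$ may be explosive or have arbitrarily large moments), and the Lemma~\ref{lem:ef} argument needs $\mathbb P_f(\Omega_0),\mathbb P_f(\Omega_\tau)$ bounded below, hence $\|\mu\|_1$ bounded above, uniformly on $\mathcal F_n$ --- again not assumed.

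The paper avoids both issues by a different global architecture. It slices the alternative by the stochastic distance itself, $\mathcal F_j=\{j\epsilon_n\le d_S(f,f_0)\le (j+1)\epsilon_n\}$ (these are \emph{random} events for fixed $f$), and covers $\mathcal F_n$ in $L_1$ with radius $\zeta j\epsilon_n$ growing in $j$. On $\Omega_n\cap\{f\in\mathcal F_j\}$, the \emph{upper} bound $d_S(f,f_0)\le(j+1)\epsilon_n$ together with Lemma~\ref{lem:sd-bound} yields $\sum_i\Lambda^{i,f}(S_{1,i})\le n(C_0+(j+1)\epsilon_n)=:v$, which is exactly the deterministic $v$ required by the compensator-based Bernstein inequality (Lemma~\ref{lem:bernstein2}); no moment control under $\mathbf P_f$ is needed. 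On the same event, the \emph{lower} bound $d_S(f,f_0)\ge j\epsilon_n$ and Lemma~\ref{lem:bound-ds-by-f1} give directly $\frac1n\sum_i\int_{S_{1,i}}(\lambda^i(f_j)-\lambda^i(f_0))\ge d_S(f_0,f_j)/2\ge(1-N_0\zeta)j\epsilon_n/2$, so the mean shift is read off from the slice without any Lemma~\ref{lem:ef}-type argument. Your fixed-radius covering discards the upper bound on the slice (losing the compensator control) and severs the direct link between the mean shift and $d_S$ (forcing the uniform ``mismatched'' lower bound). The fix is to keep the random $d_S$-slicing and let the covering radius scale with $j$.
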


\begin{proof}
The main idea is to construct individual test functions $\phi(f_1) = \phi(f_1,N, \epsilon_n)$ for testing a parameter $f_1 \in A_n$ against $f_0$:
\begin{align}\label{eq:test-f1}
    \phi(f_1) := \mathds 1_{\frac{1}{n} \sum_{i=1}^n N^i(S_{1,i}) - \Lambda^{i,0}(S_{1,i}) > v_n} \vee \mathds 1_{\frac{1}{n} \sum_{i=1}^n N^i(S_{1,i}^c) - \Lambda^{i,0}(S_{1,i}^c) > v_n}
\end{align}
with $S_{1,i} := \{ (t,s) \in [0,1]^{d+1} : \lambda^i_{f_1}(t,s) \geq \lambda^i_{f_0}(t,s) \}$, $v_n > 0$ a sequence and
\begin{align*}
    \Lambda^{i,0}(S_{1,i}) = \int_{S_{1,i}} \lambda^i_{f_0}(t,s) dt ds.
\end{align*}
Note that $\Lambda^{i,0}(S_{1,i})$ is the compensator of $N^i$ on $S_{1,i}$ under $\mathbb P_{0}$, thus,  $\frac{1}{n}\sum_{i}N^i(S_{1,i}) - \Lambda^{i,0}(S_{1,i})$ and $\frac{1}{n}\sum_{i} N^i(S_{1,i}^c) - \Lambda^{i,0}(S_{1,i}^c)$ concentrate around 0 under $\mathbb P_{0}$. Therefore, intuitively, under $\mathbb P_{0}$ (the null), $\phi(f_1)$ should go to 0 provided that $v_n$ is not too small. Under $\mathbb P_{f_1}$ (the alternative), then on average, $\lambda^i_{f_1}(t,s)$ is either mostly greater (case 1) or mostly smaller (case 2) than $\lambda^i_{0}(t,s)$. In case 1, $|S_{1,i}| > |S_{1,i}^c|$, and note that $\Lambda^{i,1}(S_{1,i}) > \Lambda^{i,0}(S_{1,i})$. Therefore, under $\mathbb P_{f_1}$, in this case $\frac{1}{n}\sum_{i}N^i(S_{1,i}) - \Lambda^{i,0}(S_{1,i}) = \frac{1}{n}\sum_{i}N^i(S_{1,i}) - \Lambda^{i,1}(S_{1,i}) + \Lambda^{i,1}(S_{1,i}) - \Lambda^{i,0}(S_{1,i})$ would concentrate on  $\lim_{n \to \infty}\frac{1}{n}\sum_{i} \Lambda^{i,1}(S_{1,i}) - \Lambda^{i,0}(S_{1,i})> 0$, which implies that $\phi(f_1)$ should go to 1. Similar reasoning can be applied to case 2 by considering $S_{1,i}^c$ instead of $S_{1,i}$.

To control the type-I and type-II error of our test on the event $\Omega_n$, we use a Bernstein  concentration inequality using an adaptation of Proposition 2 in  \cite{hansen2015lasso} for the spatio-temporal context stated in Lemma \ref{lem:bernstein1}:
\begin{align*}
    \mathbb P_0\left( \frac{1}{n} \sum_{i=1}^n N^i(S_i) - \Lambda^i(S_i) \geq x \right) \leq e^{-\frac{nx^2}{2(\bar \sigma^2 + \bar bx)}},
\end{align*}
for any $x > 0$ and with $\bar \sigma, \bar b$ constants independent of the subsets $(S_{i})_i$.

For the type-I error $\mathbb E_0[\phi]$, we will leverage a minimal $L_1$-covering net of $\mathcal{F}_n$  (defined in Assumption \ref{ass:sieves}) by balls of radius 
$\zeta j \epsilon_n$ with $\zeta > 0$ a constant which value will be fixed later, denoted by $\mathcal{N}_j$. Then, under Assumption \ref{ass:sieves}, the cardinal of  $\mathcal{N}_j$ (i.e., the covering number) is bounded by
\begin{align*}
   |\mathcal{N}_j| = C(\zeta j \epsilon_n, \mathcal{F}_j, \|\cdot \|_1) \leq
   C(\zeta_0  \epsilon_n, \mathcal{F}_n, \|\cdot \|_1) =: |\mathcal{N}_0| \leq e^{c_3 n \epsilon_n^2},
\end{align*}
if $j \geq \zeta_0/\zeta$ and this holds if $M \geq \zeta_0/\zeta$, and with $\mathcal{N}_0$ a mimimal $L_1$-covering net  of $\mathcal{F}_n$ by balls of radius $\zeta_0 \epsilon_n$ (recall that $\zeta_0$ is defined in Assumption \ref{ass:sieves}). 
For any $f_j \in \mathcal{N}_j$, we define a test function $\phi(f_j)$ as in \eqref{eq:test-f1} with a sequence $v_n$ that depends on $j$ and is defined below. Then, we define our global test function as
\begin{align*}
    \phi = \max_{j \geq M} \max_{f_j \in \mathcal{N}_j}\phi(f_j).
\end{align*}
For the type-II error $\sup_{f \in \mathcal{F}_n}\mathbb E_f[(1-\phi) \mathds{1}_{f \in A_n}]$,
we adopt a slicing approach of $A_n$ (defined in \eqref{eq:def-an}). We define for any $j \geq M$, the ``slice'' 
\begin{align*}
     \mathcal{F}_j := \{ f \in \mathcal{F}_n: j \epsilon_n \leq d_S(f, f_0) \leq (j+1) \epsilon_n\},
\end{align*}
so that we can re-express $A_n$ as (assuming wlog that $M$ is an integer)
\begin{align*}
    A_n = \bigcup_{j=M}^{\infty} \mathcal{F}_j.
\end{align*}
Since $\mathcal{F}_j \subseteq \mathcal{F}_n \subseteq \cup_{f_j \in \mathcal{N}_j} \{ f \in  \mathcal{F}_n : \|f - f_1\|_1 \leq \zeta j \epsilon_n \}$,  therefore
\begin{align*}
    \sup_{f \in \mathcal{F}_n}\mathbb E_f[(1-\phi) \mathds{1}_{f \in A_n}] &\leq \sum_{j\geq M} \sup_{f \in \mathcal{F}_n}\mathbb E_f[(1-\phi) \mathds{1}_{f \in \mathcal{F}_j}] \\
    &\leq  \sum_{j\geq M} \sup_{f_j \in \mathcal{N}_j} \sup_{f \in \mathcal{F}_n : \|f - f_j\|_1\leq \zeta j \epsilon_n} \mathbb E_f[(1-\phi(f_j)) \mathds{1}_{f \in \mathcal{F}_j}]
\end{align*}


We now specify the sequence $v_n$ in $\phi(f_j)$ and decompose $\phi(f_j)$  into two sub-tests, i.e., for any $f_{j} \in \mathcal{N}_j$, we define
\begin{align*}
    &\phi(f_j) = \phi^+(f_j) \vee \phi^-(f_j) \\
    &\phi^+(f_j) = \mathds 1_{\frac{1}{n} \sum_{i=1}^n N^i(S_{1,i}) - \Lambda^{i,0}(S_{1,i}) > v_n} \\
    &\phi^-(f_j) = \mathds 1_{\frac{1}{n} \sum_{i=1}^n N^i(S_{1,i}^c) - \Lambda^{i,0}(S_{1,i}^c) > v_n}.
\end{align*}
We define $v_n = x_1j\epsilon_n$ ($x_1>0$ a constant which value will be fixed later) and
 apply Lemma \ref{lem:bernstein1} with $x = x_1 j \epsilon_n = v_n$, $x_1 > 0$, and $S_i = S_{1,i}$:
\begin{align*}
    \mathbb E_0[\phi^+(f_j)] = \mathbb P_0\left[\frac{1}{n} \sum_{i=1}^n N^i(S_{1,i}) - \Lambda^{i,0}(S_{1,i}) > x_1 j \epsilon_n \right] \leq e^{- \frac{x_1^2 n j^2 \epsilon_n^2}{2(\bar \sigma^2 + \bar bx_1 j \epsilon_n)}}.
\end{align*}
We can apply the same inequality with $S_i = S_{1,i}^c$ and obtain that for the test function
\begin{align*}
        \phi^-(f_j) = \mathds 1_{\frac{1}{n} \sum_{i=1}^n N^i(S_{1,i}^c) - \Lambda^{i,0}(S_{1,i}^c) > v_n},
\end{align*}
that
\begin{align*}
    \mathbb E_0[ \phi^-(f_j)] \leq e^{- \frac{x_1^2 n j^2 \epsilon_n^2}{2(\bar \sigma^2 + \bar bx_1 j \epsilon_n)}}.
\end{align*}
Thus, 
we obtain:
\begin{align*}
    \mathbb E_0[ \phi(f_j)] = \mathbb E_0[ \phi^+(f_j) \vee  \phi^-(f_j)]\ \leq \mathbb E_0[ \phi^+(f_j)] + \mathbb E_0[ \phi^-(f_j)] \leq 2e^{- \frac{x_1^2 n j^2 \epsilon_n^2}{2(\bar \sigma^2 + \bar bx_1 j \epsilon_n)}}.
\end{align*}

We distinguish 2 cases:
\begin{itemize}
    \item \textbf{$j \epsilon_n > \bar \sigma^2 / (\bar b x_1)$}. Then the RHS above is
    \begin{align*}
        \leq 2 e^{- \frac{x_1^2 n j^2 \epsilon_n^2}{4\bar bx_1 j \epsilon_n}} = 2e^{- \frac{x_1 n j \epsilon_n}{4\bar b}}.
    \end{align*}
    \item \textbf{$j \epsilon_n \leq \bar \sigma^2 / (\bar b x_1)$}. Then the RHS above is
    \begin{align*}
        \leq  2e^{- \frac{x_1^2 n j^2 \epsilon_n^2}{4\bar \sigma^2}}.
    \end{align*}
\end{itemize}

Recall that our global test function is defined as
$$\phi : = \max_{j \geq M} \max_{f_1 \in \mathcal{N}_j} \phi(f_j) \leq \sum_{j\geq M} \sum_{f_1 \in \mathcal{N}_j} \phi(f_j).$$
Note that the number of terms of the second sum in the RHS of the previous inequality is bounded by $|\mathcal{N}_0|$.

\paragraph{Type I error.} We can  upper-bound $\mathbb E_0[\phi]$ by
\begin{align*}
   \mathbb E_0[\phi] &\leq \sum_{j \geq M} \sum_{f_1 \in \mathcal{N}_j} \mathbb E_0[\phi_{1,j}] \leq
   \sum_{j > \bar \sigma^2 / (\bar b x_1 \epsilon_n)}  2|\mathcal{N}_0| e^{- \frac{x_1 n j \epsilon_n}{4\bar b}} + \sum_{M \leq j \leq \bar \sigma^2 / (\bar b x_1 \epsilon_n)}  2|\mathcal{N}_0|  e^{- \frac{x_1^2 n j^2 \epsilon_n^2}{4\bar \sigma^2}} \\
   &\leq \sum_{j > \bar \sigma^2 / (\bar b x_1 \epsilon_n)} 2 e^{c_3 n \epsilon_n^2} e^{- \frac{x_1 n j \epsilon_n}{4\bar b}} + \sum_{M \leq j \leq  \bar \sigma^2 / (\bar b x_1 \epsilon_n)}2  e^{c_3 n \epsilon_n^2} e^{- \frac{x_1^2 n j^2 \epsilon_n^2}{4\bar \sigma^2}} \\
   &\leq 2  e^{c_3 n \epsilon_n^2} \sum_{j \geq  M} e^{- \frac{\min(x_1,x_1^2) n j \epsilon_n^2}{4 \max(b, \bar \sigma^2) }} \\
   &\leq 4 e^{- \frac{\min(x_1,x_1^2)  M n \epsilon_n^2}{8  \max(b, \bar \sigma^2)}} = o(1).
\end{align*}
where in the third inequality we use that $\max \left( e^{- \frac{x_1 n j \epsilon_n}{4\bar b}},  e^{- \frac{x_1^2 n j^2 \epsilon_n^2}{4\bar \sigma^2}} \right)\leq e^{- \frac{ \min(x_1, x_1^2) n j \epsilon_n^2}{4 \max(\bar b, \bar \sigma^2)}} $ and in the last inequality that $\frac{\min(x_1, x_1^2) M}{8 \max(\bar b, \bar \sigma^2)} > c_3$ for $M$ and $n$ large enough.

\paragraph{Type II error.}

We now bound $\sup_{f \in \mathcal{F}_n}  \mathbb E_f[(1 - \phi)  \mathds 1_{\Omega_n} \mathds 1_{f \in A_n}]$. First note that for any $j' \geq M$ and $f_1' \in \mathcal{N}_{j'}$,
\begin{align*}
    1-\phi = 1 -  \max_{j \geq M} \max_{f_j \in \mathcal{N}_j} \phi(f_j) \leq 1 -\phi(f_{j'}).
\end{align*}
Recall that since $A_n = \bigcup_{j \geq M} \mathcal{F}_j$, we have
\begin{align*}
    \sup_{f \in \mathcal{F}_n}  \mathbb E_f[(1 - \phi)  \mathds 1_{\Omega_n} \mathds 1_{f \in A_n}] &\leq \sum_{j \geq M} \sup_{f \in \mathcal{F}_n} \mathbb E_f[(1 - \phi) \mathds 1_{\Omega_n} \mathds 1_{f \in \mathcal{F}_j}] \\
    &\leq \sum_{j \geq M}  \sup_{f_j \in \mathcal{N}_j} \sup_{f \in \mathcal{F}_n, \|f-f_{1}\| \leq \zeta j\epsilon_n} \mathbb E_f[(1 - \phi(f_j)) \mathds 1_{\Omega_n} \mathds{1}_{f \in \mathcal{F}_j}],
\end{align*}
using that for any $j$, $\mathcal{F}_n \subset \bigcup_{f_j \in \mathcal{N}_j}  \{f \in \mathcal{F}_n : \|f - f_j\|_1 \leq \zeta j \epsilon_n\}$.
Let  $f_j \in  \mathcal{N}_j$ and $f \in \mathcal{F}_n$ such that $\|f-f_{j}\| \leq \zeta j\epsilon_n$. 
We have
\begin{align}
    \mathbb E_f[\mathds{1}_{f \in \mathcal{F}_j}\mathds 1_{\Omega_n}(1-\phi(f_j))] &=\mathbb E_f[\mathds{1}_{f \in \mathcal{F}_j}\mathds{1}_{\Omega_n}(1-\phi^+(f_j) \vee \phi^-(f_j))] \nonumber \\ 
    &\leq \mathbb E_f[\mathds{1}_{f \in \mathcal{F}_j}\mathds{1}_{\Omega_n}(1-\phi^+(f_j))] \wedge \mathbb E_f[\mathds{1}_{f \in \mathcal{F}_j} \mathds{1}_{\Omega_n}(1-\phi^-(f_j))] \label{eq:phi-phipm}\\ 
 \mathbb E_f[\mathds{1}_{f \in \mathcal{F}_j} \mathds{1}_{\Omega_n}(1-\phi^+(f_j))]   &=\mathbb E_f[\mathds{1}_{f \in \mathcal{F}_j} \mathds 1_{\Omega_n} \mathds 1_{\frac{1}{n} \sum_{i=1}^n N^i(S_{1,i}) - \Lambda^{i,0}(S_{1,i}) < v_n}] \nonumber \\
    &= \mathbb E_f[\mathds{1}_{f \in \mathcal{F}_j} \mathds 1_{\Omega_n} \mathds 1_{\frac{1}{n} \sum_{i=1}^n N^i(S_{1,i}) - \Lambda^{i,f}(S_{1,i}) + (\Lambda^{i,f}(S_{1,i}) - \Lambda^{i,0}(S_{1,i})) < v_n}], \nonumber
\end{align}
where $S_{1,i} := \{ (t,s) \in [0,1]^d : \lambda^i_{f_j}(t,s) \geq \lambda^i_{f_0}(t,s) \}$, $v_n > 0$.
We can lower-bound $\frac{1}{n} \sum_{i} \Lambda^{i,f}(S_{1,i}) - \Lambda^{i,0}(S_{1,i})$ by
\begin{align}
    \frac{1}{n} \sum_{i} \Lambda^{i,f}(S_{1,i}) - \Lambda^{i,0}(S_{1,i}) &= \frac{1}{n} \sum_{i} \Lambda^{i,f}(S_{1,i}) - \Lambda^{i,f_1}(S_{1,i}) + \Lambda^{i,f_1}(S_{1,i}) -\Lambda^{i,0}(S_{1,i}) \nonumber \\
    &=\frac{1}{n} \sum_{i} \int_{S_{1,i}} (\lambda^i_{t,s}(f) - \lambda^i_{t,s}(f_j) + \lambda^i_{t,s}(f_j) - \lambda^i_{t,s}(f_0))dtds \nonumber \\
    &\geq - \frac{1}{n} \sum_{i} \int_{S_{1,i}} |\lambda^i_{t,s}(f) - \lambda^i_{t,s}(f_j)| dt ds  + \frac{1}{n} \sum_{i} \int_{S_{1,i}} (\lambda^i_{t,s}(f_j) - \lambda^i_{t,s}(f_0))dtds \nonumber \\
    &\geq - d_S(f,f_j) +  \frac{1}{n} \sum_{i}\int_{S_{1,i}} (\lambda^i_{t,s}(f_j) - \lambda^i_{t,s}(f_0))dtds. \label{eq:lb-Lambda}
\end{align}
 Using Lemma \ref{lem:bound-ds-by-f1}, on $\Omega_n$, for any $f \in \mathcal{F}_j$,
\begin{align}\label{eq:l1-bounds-ds}
    d_{S}(f,f_j) \leq N_0 \|f-f_j\|_1 \leq N_0 \zeta j\epsilon_n.
\end{align}
Let $f \in \mathcal{F}_j$. We now consider two cases:
\begin{itemize}
    \item \textbf{Case 1}:
    \begin{align*}
    &\frac{1}{n} \sum_{i} \int_{S_{1,i}} (\lambda^i_{t,s}(f_j) - \lambda^i_{t,s}(f_0))dtds > \frac{1}{n} \sum_{i} \int_{S_{1,i}^c} (\lambda^i_{t,s}(f_0) - \lambda^i_{t,s}(f_j))dtds.
\end{align*}
    \item \textbf{Case 2}:
    \begin{align*}
    &\frac{1}{n} \sum_{i} \int_{S_{1,i}} (\lambda^i_{t,s}(f_j) - \lambda^i_{t,s}(f_0))dtds \leq \frac{1}{n} \sum_{i} \int_{S_{1,i}^c} (\lambda^i_{t,s}(f_0) - \lambda^i_{t,s}(f_j))dtds.
\end{align*}
\end{itemize}
Assume first that \textbf{Case 1} holds. Then
\begin{align*}
\frac{1}{n} \sum_{i} \int_{S_{1,i}} (\lambda^i_{t,s}(f_j) - \lambda^i_{t,s}(f_0))dtds > \frac{d_S(f_0,f_j)}{2}.
\end{align*}
Moreover, 
\begin{align*}
    d_S(f_0,f_j) = \frac{1}{n} \sum_{i=1}^n \int |\lambda_{t,s}^i(f_j) - \lambda_{t,s}^i(f_0)|dtds 
    &\geq \frac{1}{n} \sum_{i=1}^n \int |\lambda_{t,s}^i(f) - \lambda_{t,s}^i(f_0)|dtds \\
    &-\frac{1}{n} \sum_{i=1}^n \int |\lambda_{t,s}^i(f_j) - \lambda_{t,s}^i(f)|dt ds \\
    &\geq j\epsilon_n - \frac{1}{n} \sum_{i=1}^n  \int |\lambda_{t,s}^i(f_j) - \lambda_{t,s}^i(f)| dt ds = j\epsilon_n - d_S(f,f_j),
\end{align*}
since $f \in \mathcal{F}_j$. Besides, using Lemma \ref{lem:bound-ds-by-f1}, on $\Omega_n$, we have
\begin{align*}
    d_S(f,f_j) \leq N_0 \|f-f_j\|_1 \leq N_0 \zeta j \epsilon_n,
\end{align*}
therefore,
\begin{align*}
    d_S(f_0,f_j) \geq j\epsilon_n \left ( 1 - N_0 \zeta  \right ) > 0,
\end{align*}
if $\zeta < N_0^{-1}$.

From \eqref{eq:lb-Lambda} and \eqref{eq:l1-bounds-ds}, this implies 
\begin{align*}
    \frac{1}{n} \sum_{i} \Lambda^{i,f}(S_{1,i}) - \Lambda^{i,0}(S_{1,i}) &\geq - d_S(f,f_j) +\frac{d_S(f_0,f_j)}{2} \\
    &\geq - N_0 \zeta j \epsilon_n + \frac{j \epsilon_n (1- N_0 \zeta)}{2} \\
    &\geq (1/2 - 3N_0 \zeta/2) j \epsilon_n \geq j \epsilon_n/4,
\end{align*}
by choosing 
\begin{align*}
    (1/2 - 3N_0 \zeta/2) \geq \frac{1}{4} \iff \zeta \leq \frac{N_0}{6}.
\end{align*}
 Thus, with $v_n = nj \epsilon_n/8$, we obtain
\begin{align}
      \mathbb E_f[\mathds{1}_{f \in \mathcal{F}_j} \mathds{1}_{\Omega_n}(1-\phi_{1,j}^+)]  &\leq \mathbb E_f[\mathds{1}_{\Omega_n} \mathds 1_{\frac{1}{n} \sum_{i=1}^n N^i(S_{1,i}) - \Lambda^{i,f}(S_{1,i}) < v_n - j\epsilon_n/4}] \nonumber \\
      &= \mathbb E_f[\mathds{1}_{\Omega_n} \mathds 1_{\frac{1}{n} \sum_{i=1}^n N^i(S_{1,i}) - \Lambda^{i,f}(S_{1,i}) < (1/8 - 1/4) j\epsilon_n}] \nonumber \\
       &= \mathbb E_f[\mathds{1}_{\Omega_n} \mathds 1_{\frac{1}{n} \sum_{i=1}^n N^i(S_{1,i}) - \Lambda^{i,f}(S_{1,i}) < -j\epsilon_n/8}].\label{eq:upper-bound-ef}
\end{align}
To bound the RHS we use another form of Bernstein's inequality stated in Lemma \ref{lem:bernstein2}:
\begin{align*}
    \mathbb P_f\left(  \sum_{i=1}^n N^i(S_{1,i}) - \Lambda^i(S_{1,i}) \leq -  \sqrt{2vx} - \frac{x}{3}  \right) \leq e^{-x},
\end{align*}
for any $x > 0$ and with $v \geq \sum_i \int_{S_{1,i}} \lambda^i_{t,s}(f)dt ds = \sum_i \Lambda^{i,f}(S_{1,i})$. We first find such $v$. We have
\begin{align*}
    \sum_i \Lambda^{i,f}(S_{1,i}) = \sum_i \int_{S_{1,i}} \lambda^{i}_{t,s}(f) dt ds \leq  \sum_i \int_{} \lambda^{i}_{t,s}(f) dt ds \leq n \|\mu\|_1 + \|g\|_1 \sum_i N^i[0,1]^{d+1}.
\end{align*}
Moreover since $f \in \mathcal{F}_j$ and on $\Omega_n$, using Lemma \ref{lem:sd-bound}, we have
\begin{align}
    \|\mu\|_1 + \|g\|_1 \frac{1}{n} \sum_i N^i[0,1]^{d+1} &\leq  \|\mu_0\|_1 + \|g_0\|_1 \frac{1}{n} \sum_i N^i[0,1]^{d+1} +  d_S(f,f_0) \nonumber \\
    &\leq \|\mu_0\|_1 + e_0 \|g_0\|_1 + \|g_0\|_1 + (j + 1)  \epsilon_n. \label{eq:bound-ds}
\end{align}
with $e_0 = \frac{\bar \mu}{1 - \|g_0\|_1}$. Thus, letting $C_0 =  \|\mu_0\|_1 +  \|g_0\|_1 (e_0 +1 )$, we obtain
\begin{align*}
    \sum_i \Lambda^{i,f}(S_{1,i}) \leq n C_0 + (j+1) n \epsilon_n =: v.
\end{align*}
Therefore,
\begin{align}
    \mathbb E_f[\mathds{1}_{f \in \mathcal{F}_j}  \mathds 1_{\Omega_n}(1-\phi^+(f_j))] &= \mathbb E_f[\mathds{1}_{\Omega_n} \mathds 1_{\frac{1}{n} \sum_{i=1}^n N^i(S_{1,i}) - \Lambda^{i,f}(S_{1,i}) < -\sqrt{2vx} - \frac{x}{3}}] \\
    &\leq \mathbb P_f\left[\frac{1}{n} \sum_{i=1}^n N^i(S_{1,i}) - \Lambda^{i,f}(S_{1,i}) < - \sqrt{2vx} - \frac{x}{3}\right] \leq  e^{-x}. \label{eq:bernstein4f}
\end{align}

We distinguish 2 cases:
\begin{itemize}
    \item \textbf{$j \epsilon_n \leq C_0 +1$}. Then we apply \eqref{eq:bernstein4f} with $x = x_1 n j^2\epsilon_n^2$ with
    $0< x_1\leq \frac{3}{56 (C_0 + 1)}$. In particular, $x_1(C_0 + 1) \leq 1$ thus $\sqrt{x_1(C_0 + 1)} \leq x_1(C_0 + 1)$. Then,
\begin{align*}
    \sqrt{2vx} + \frac{x}{3} &= \sqrt{2x_1vn }j\epsilon_n + \frac{x_1 n j^2\epsilon_n^2}{3} \\
    &=\left(\sqrt{2x_1 (C_0 + (j+1) \epsilon_n)}  + \frac{x_1  j\epsilon_n}{3} \right)n j\epsilon_n \\
    &\leq \left( 2\sqrt{x_1(C_0 + 1)} + \frac{x_1(C_0+1)}{3} \right) n j\epsilon_n \\
    &\leq \frac{7 x_1(C_0+1)}{3} n j\epsilon_n \leq \frac{n j\epsilon_n}{8}.
\end{align*}
    Thus, with \eqref{eq:bernstein4f}, we obtain that
    \begin{align*}
       \mathbb E_f[\mathds{1}_{\Omega_n} \mathds 1_{\frac{1}{n} \sum_{i=1}^n N^i(S_{1,i}) - \Lambda^{i,f}(S_{1,i}) < -j\epsilon_n/8}]  \leq \mathbb E_f[\mathds{1}_{\Omega_n} \mathds 1_{\sum_{i=1}^n N^i(S_{1,i}) - \Lambda^{i,f}(S_{1,i}) < -\sqrt{2vx} - \frac{x}{3}}] \leq e^{-n x_1 j^2 \epsilon_n^2}.
    \end{align*}

    \item \textbf{$j \epsilon_n > C_0 +1$}. Then we apply \eqref{eq:bernstein4f} with $x = x_0 n j\epsilon_n$ with
    $x_0 \leq \min( \frac{C_0 + 1}{32^2}, \frac{3}{16})$. In particular $2\sqrt{\frac{x_0}{C_0 + 1}} \leq \frac{1}{16}$. We then have
        \begin{align*}
    \sqrt{2vx} + \frac{x}{3} &= \sqrt{2x_0v n j\epsilon_n} + \frac{x_0 n j\epsilon_n}{3} \\
    &=\left(\frac{\sqrt{2x_0(C_0 + (j+1) \epsilon_n)}}{j\epsilon_n} + \frac{x_0}{3} \right)n j\epsilon_n \\
    &\leq \left( 2\frac{\sqrt{x_0}}{\sqrt{j\epsilon_n}} + \frac{x_0}{3} \right)n j\epsilon_n \\
    &\leq \left( 2\frac{\sqrt{x_0}}{\sqrt{C_0+1}} + \frac{x_0}{3} \right)j\epsilon_n \leq \frac{n j\epsilon_n}{8}.
\end{align*}
Thus, with \eqref{eq:upper-bound-ef} and \eqref{eq:bernstein4f}, we obtain that
    \begin{align*}
      \mathbb E_f[\mathds{1}_{f \in \mathcal{F}_j}  \mathds 1_{\Omega_n}(1-\phi^+(f_j))] &\leq \mathbb E_f[\mathds{1}_{\Omega_n} \mathds 1_{\frac{1}{n} \sum_{i=1}^n N^i(S_{1,i}) - \Lambda^{i,f}(S_{1,i}) < -j\epsilon_n/8}] \\
      &\leq \mathbb E_f[\mathds{1}_{\Omega_n} \mathds 1_{\sum_{i=1}^n N^i(S_{1,i}) - \Lambda^{i,f}(S_{1,i}) < -\sqrt{2vx} - \frac{x}{3}}] \leq e^{-x_0 n j \epsilon_n}.
    \end{align*}
\end{itemize}

Note that the above bounds are independent of $f$ and $f_1$. Therefore, we have proven that
\begin{align*}
    \sup_{f_1 \in \mathcal{N}_j} \sup_{f \in \mathcal{F}_n, \|f-f_{1}\| \leq \zeta j\epsilon_n} \mathbb E_f[(1 - \phi^+(f_j)) \mathds 1_{\Omega_n} \mathds{1}_{f \in \mathcal{F}_j}] \leq \begin{cases}
        e^{-x_1 n j^2 \epsilon_n^2} & \text{ if } j\epsilon_n \leq C_0 + 1 \\
        e^{-x_0 n j \epsilon_n} & \text{ if } j\epsilon_n > C_0 + 1
    \end{cases}.
\end{align*}

Now assume that \textbf{Case 2} holds. Then 
\begin{align*}
    \frac{1}{n} \sum_{i} \int_{S_{1,i}^c} (\lambda^i_{t,s}(f_1) - \lambda^i_{t,s}(f_0))dtds \geq \frac{d_S(f_0,f_1)}{2}.
\end{align*}
By applying the same computations with  $S_{1,i}$ replaced by $S_{1,i}^c$, we obtain
\begin{align*}
     \sup_{f_1 \in \mathcal{N}_j} \sup_{f \in \mathcal{F}_n, \|f-f_{1}\| \leq \zeta j \epsilon_n} \mathbb E_f[\mathds{1}_{f \in \mathcal{F}_j}  \mathds 1_{\Omega_n}(1- \phi^-(f_j))] \leq \begin{cases}
        e^{-x_1 n j^2 \epsilon_n^2} & \text{ if } j\epsilon_n \leq C_0 + 1 \\
        e^{-x_0 n j \epsilon_n} & \text{ if } j\epsilon_n > C_0 + 1
    \end{cases}
\end{align*}

Given \eqref{eq:phi-phipm}, overall this implies that
\begin{align*}
    \sum_{j \geq M}  \sup_{f_1 \in \mathcal{N}_j} \sup_{f \in \mathcal{F}_n, \|f-f_{1}\| \leq \zeta j\epsilon_n} \mathbb E_f[\mathds 1_{f \in \mathcal{F}_j}(1 -  \phi(f_j)) \mathds 1_{\Omega_n}] &\leq \sum_{M \leq j \leq (C_0 +1)\epsilon_n^{-1} } 
    e^{-  x_1^2 n j^2 \epsilon_n^2} + \sum_{j \geq (C_0 +1)\epsilon_n^2} 
    e^{-  x_0 n j \epsilon_n} \\
    &\leq  e^{-  \min(x_0, x_1) n M \epsilon_n^2/2} = e^{-  b_2 n \epsilon_n^2}.
\end{align*}
with $b_2 = \frac{\min(x_0, x_1)M}{2}$ and $b_2 > c_1$ for $M$ large enough.

\end{proof}

\subsection{Proof of Lemma \ref{lem:ef}}

\begin{lemma}
For any $f \in A_{n}^c$, there exists a constant $p_0 >0$ such that on $\Omega_n$,
    \begin{align*}
        \mathbb E_f[Z_1] \geq p_0 \|f-f_0\|_1.
    \end{align*}
\end{lemma}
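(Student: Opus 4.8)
The plan is to bound $\mathbb E_f[Z_1]$ from below by isolating two simple behaviours of the first sequence $N^1$: the event that $N^1$ has no point at all, which contributes a term proportional to $\|\mu-\mu_0\|_1$, and the event that the first point $(t^1_1,s^1_1)$ of $N^1$ lies strictly inside the domain and is not followed by a further point for a time-lag of length $a$, which contributes a term proportional to $\|g-g_0\|_1$. Before starting I would record the uniform $L_1$-bounds available here: since $f\in A_n^c$ forces $d_S(f,f_0)\le M\epsilon_n$, Lemmas~\ref{lem:sd-bound} and~\ref{lem:event} give, on $\Omega_n$ and for $n$ large, constants $C_\mu,C_g>0$ independent of $f$ with $\|\mu\|_1\le C_\mu$, $\|g\|_1\le C_g$, hence $\|f-f_0\|_1\le C_{\max}:=C_\mu+C_g+\|\mu_0\|_1+\|g_0\|_1$. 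Write $a:=\|\mu-\mu_0\|_1$, $b:=\|g-g_0\|_1$.

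For the $\mu$-term I would work on $\{N^1([0,1]^{d+1})=0\}$: there $\lambda^1_{t,s}(f)=\mu(t,s)$, $\lambda^1_{t,s}(f_0)=\mu_0(t,s)$ and $t^1_2=1$, so $Z_1=\|\mu-\mu_0\|_1$ on this event, while, arguing as in the proof of Proposition~\ref{lem:identifiability}, $\mathbb P_f(N^1([0,1]^{d+1})=0)=e^{-\|\mu\|_1}\ge e^{-C_\mu}$; this gives $\mathbb E_f[Z_1]\ge e^{-C_\mu}a$. For the $g$-term I would set $\widetilde B:=\{t^1_1\le 1-a,\ s^1_1\in[b,1-b]^d,\ N^1\text{ has no point in }(t^1_1,t^1_1+a)\times[0,1]^d\}$. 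On $\widetilde B$ the convention gives $t^1_2\ge t^1_1+a$, and for $t\in(t^1_1,t^1_2)$ the only past point is $(t^1_1,s^1_1)$, so $\lambda^1_{t,s}(f)=\mu(t,s)+g_\Delta(t,s)$ with $g_\Delta(t,s):=g(t-t^1_1,s-s^1_1)$ (and similarly $g_{0,\Delta}$); restricting the integral in $Z_1$ to $[t^1_1,t^1_1+a]\times[0,1]^d$, using $|\mu+g_\Delta-\mu_0-g_{0,\Delta}|\ge|g_\Delta-g_{0,\Delta}|-|\mu-\mu_0|$ and the substitution $(u,v)=(t-t^1_1,s-s^1_1)$ — which, since $s^1_1\in[b,1-b]^d$ implies $s^1_1+\{\|v\|_\infty\le b\}\subseteq[0,1]^d$, sweeps out the whole support of $g$ and $g_0$ — I obtain $Z_1\ge b-a$ on $\widetilde B$, hence $\mathbb E_f[Z_1]\ge(b-a)\,\mathbb P_f(\widetilde B)$, a bound that stays valid when $b<a$. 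To lower bound $\mathbb P_f(\widetilde B)$ I would condition on the first point of $N^1$, whose space--time density is $\mu(t,s)\exp\!\big(-\!\int_{[0,t]\times[0,1]^d}\mu\big)\ge\mu(t,s)e^{-C_\mu}$, and bound the conditional probability of no point in $(t,t+a)\times[0,1]^d$ from below by $\exp(-\|\mu\|_1-\|g\|_1)\ge e^{-C_\mu-C_g}$, which yields
\[
 \mathbb P_f(\widetilde B)\ \ge\ e^{-2C_\mu-C_g}\int_0^{1-a}\!\!\int_{[b,1-b]^d}\!\mu(t,s)\,dt\,ds\ \ge\ e^{-2C_\mu-C_g}\big(\underline{\mu}(1-a)(1-2b)^d-a\big),
\]
using Assumption~\ref{ass:boundedness} and $a,b<1/2$. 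Setting $\rho:=\tfrac12\underline{\mu}(1-a)(1-2b)^d>0$, whenever $a\le\rho$ this is at least $\pi_0:=e^{-2C_\mu-C_g}\rho>0$.

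To conclude I would combine the two estimates by a dichotomy on $a$. If $a>\rho$, the $\mu$-term gives $\mathbb E_f[Z_1]\ge e^{-C_\mu}\rho\ge(e^{-C_\mu}\rho/C_{\max})\,\|f-f_0\|_1$. If $a\le\rho$, then $\mathbb P_f(\widetilde B)\ge\pi_0$: when $b\ge 2a$ the $g$-term gives $\mathbb E_f[Z_1]\ge\pi_0(b-a)\ge\tfrac{\pi_0}{2}b\ge\tfrac{\pi_0}{3}(a+b)$, and when $b<2a$ the $\mu$-term gives $\mathbb E_f[Z_1]\ge e^{-C_\mu}a\ge\tfrac{e^{-C_\mu}}{3}(a+b)$. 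Hence the lemma holds with $p_0:=\min\{e^{-C_\mu}\rho/C_{\max},\ e^{-C_\mu}/3,\ \pi_0/3\}$.

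I expect the main obstacle to be the $g$-term: one must localise the time window in which exactly one past point drives the intensity, absorb the $-\|\mu-\mu_0\|_1$ produced by the reverse triangle inequality, and — most delicately — make $\mathbb P_f(\widetilde B)$ bounded below uniformly over $f\in A_n^c$, which is precisely where the $L_1$-bounds of Lemma~\ref{lem:sd-bound} on $\Omega_n$ and the case split on $\|\mu-\mu_0\|_1$ are needed. The $\mu$-term and the final combination are routine.
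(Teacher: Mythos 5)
Your proof is correct and follows the same overall strategy as the paper's argument: isolate a ``no points'' event to capture $\|\mu-\mu_0\|_1$, isolate an ``exactly one early point with a quiet window of length $a$ after it'' event to capture $\|g-g_0\|_1$, then resolve by a dichotomy on the relative sizes of the two $L_1$-norms. Three points of genuine divergence are worth noting. First, you bound the probabilities directly (density of the first point times the conditional probability of a quiet window), whereas the paper routes everything through the likelihood ratio $\mathcal{L}_f$ against a unit-rate Poisson reference $\mathbb Q$; the two calculations are equivalent, and yours is slightly more elementary. Second, you anchor the $g$-window at the random first point $(t^1_1,s^1_1)$ rather than a fixed deterministic $\tau$ as in the paper's event $\Omega_\tau$; this is a cosmetic change, both work. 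Third — and most substantively — you explicitly require $s^1_1\in[b,1-b]^d$ in your event $\widetilde B$, which is exactly what is needed for the change of variables $v=s-s^1_1$ to sweep the full support of $g-g_0$ and recover the exact equality $\int_0^a\int_{[0,1]^d}|g(u,s-s^1_1)-g_0(u,s-s^1_1)|\,ds\,du=\|g-g_0\|_1$. The paper's $\Omega_\tau$ places no spatial restriction on $s^1_1$ yet invokes this equality, which actually fails for $s^1_1$ near the boundary of $[0,1]^d$; your version silently repairs this, so you should feel confident about the extra constraint. One stylistic caution: you overload $a,b$ to mean $\|\mu-\mu_0\|_1,\|g-g_0\|_1$ in the same passage where the paper's $a,b$ (the time/space supports of $g$) also appear — the expressions $(1-a)(1-2b)^d$ and ``$a\le\rho$'' use the two different meanings in adjacent lines — so rename one pair before writing this up.
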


\begin{proof}
    Recall that
        \begin{align*}
        Z_1 := \int_0^{t^1_2} \int_{[0,1]^{d}} |\lambda^i_{t,s}(f) -  \lambda^i_{t,s}(f_0)| dt ds,
    \end{align*}
    where $t^1_2$ is the time of the second event or $t_2^1=1$ if there are less than 2 events. Define the event
    \begin{align*}
        \Omega_0 := \{ N^1[0,1]^{d+1} = 0 \},
    \end{align*}
    i.e., $\Omega_0$ is the event that the sequence has no events.
    Then on $\Omega_0$, $t_2^1 = 1$ and
    \begin{align*}
        \mathbb E_f[Z_1 \mathds 1_{\Omega_0} ] =  \mathbb E_f\left[\mathds 1_{\Omega_0}\int_0^{1} \int_{[0,1]^{d}} |\mu(t,s) - \mu_0(t,s)| dt ds \right] = \|\mu - \mu_0\|_1 \mathbb P_f[\Omega_0].
    \end{align*}
     Moreover, with $\mathbb Q$ the measure of a homogeneous Poisson point process with unit intensity on $[0,1]^{d+1}$ we have
    \begin{align*}
        \mathbb E_f[\mathds 1_{\Omega_0 } ] = \mathbb E_{\mathbb Q}[\mathcal{L}_f\mathds 1_{\Omega_0} ],
    \end{align*}
    where $\mathcal{L}_f$ is the likelihood process on $[0,1]^{d+1}$ defined as 
    \begin{align*}
        \mathcal{L}_f = \exp \left(1 - \int \lambda_{t,s}^1(f) dt ds + \int \log (\lambda_{t,s}^1(f)) dN_{t,s} \right)
    \end{align*}
    On $\Omega_0$,
    \begin{align*}
        \mathcal{L}_f = \exp \left(1 - \int \mu(t,s) dt ds \right) \geq \exp \left(1 - \|\mu\|_1 \right).
    \end{align*}
    From \eqref{eq:ineq-ds}, on $\Omega_n$,
    \begin{align*}
        \|\mu\|_1 \leq \|\mu_0\|_1 + \frac{3 e_0}{2} \|g_0\|_1  +   d_S(f,f_0) \leq \|\mu_0\|_1 + \frac{3 e_0}{2} \|g_0\|_1 + M\epsilon_n
    \end{align*}
    since $f \in A_{n}^c$. Thus, for $n$ large enough, on $\Omega_0 \cap \Omega_n$,
    \begin{align*}
         \mathcal{L}_f \geq \exp \left(- \|\mu_0\|_1 -  \frac{3 e_0}{2} \|g_0\|_1  \right) =: \mathcal{L}_0,
    \end{align*}
    and thus,
    \begin{align}\label{eq:ef-omega0}
        \mathbb E_f[Z_1 \mathds 1_{\Omega_0 } ] \geq \mathbb E_f[Z_1 \mathds 1_{\Omega_0 \cap \Omega_n} ] \geq \mathcal{L}_0 \|\mu-\mu_0\|_1 \mathbb Q(\Omega_0).
    \end{align}
    Since $\mathbb Q$ is a homogeneous Poisson process with intensity one,
    \begin{align*}
        \mathbb Q(\Omega_0)= e^{-1}.
    \end{align*}
    Therefore, we can conclude that
    \begin{align*}
        \mathbb E_f[Z_1 \mathds 1_{\Omega_0 } ] \geq \mathcal{L}_0 e^{-1} \|\mu-\mu_0\|_1.
    \end{align*}
    Now let $\tau \in (0, 1-2a)$ and define the subspaces $S_\tau^- = [0,\tau] \times [0,1]^d$, $S_\tau = [\tau, \tau+a] \times [0,1]^d$, $S_\tau^+ = [\tau+a, \tau+2a] \times [0,1]^d$ and the event:
    \begin{align*}
        \Omega_\tau = \{ N^1(S_\tau^-) = 0, N^1(S_\tau) = 1, N^1(S_\tau^+) = 0 \}.
    \end{align*}
    Note that on $\Omega_\tau$, $t_2^1 > t_1^1 + a$.
    Then,
    \begin{align}
        \mathbb E_f[Z_1 \mathds 1_{\Omega_\tau} ] &\geq \mathbb E_f\left[\mathds 1_{\Omega_\tau} \int_0^{\tau} \int_{[0,1]^{d}} |\mu(t,s) - \mu_0(t,s)| dt ds \right] \nonumber \\
        &+\mathbb E_f\left[\mathds 1_{\Omega_\tau} \int_{t_1^1}^{t_1^1 + a} \int_{[0,1]^{d}} |\mu(t,s) + g(t-t_1^1,s - s_1^1) - \mu_0(t,s) - g_0(t-t_1^1,s - s_1^1)| dt ds \right] \nonumber \\
        &\geq \mathbb E_f\left[\mathds 1_{\Omega_\tau} \int_{t_1^1}^{t_1^1 + a} \int_{[0,1]^{d}} \left| |\mu(t,s) - \mu_0(t,s)| - |g(t-t_1^1,s - s_1^1) -g_0(t-t_1^1,s - s_1^1)| \right| dt ds \right]
    \end{align}
    Let first assume that
    \begin{align*}
        &\|\mu - \mu_0\|_1 \geq \frac{\mathcal{L}_0 ae^{-2a-\tau}}{2} \|g -g_0\|_1 \\
        &\implies \|\mu - \mu_0\|_1 + \frac{\mathcal{L}_0 ae^{-2a-\tau}}{2} \|\mu -\mu_0\|_1 \geq \frac{\mathcal{L}_0 ae^{-2a-\tau}}{2} \|f -f_0\|_1 \\
        &\implies  \|\mu - \mu_0\|_1 \geq \frac{\mathcal{L}_0 ae^{-2a-\tau}}{2 + \mathcal{L}_0 ae^{-2a-\tau}} \|f-f_0\|_1.
    \end{align*}
    Then from \eqref{eq:ef-omega0},
    \begin{align*}
        \mathbb E_f[Z_1] \geq \mathbb E_f[Z_1 \mathds 1_{\Omega_0 } ] \geq \frac{\mathcal{L}_0^2ae^{-2a-\tau-1}}{2 + \mathcal{L}_0 ae^{-2a-\tau}} \|f -f_0\|_1.
    \end{align*}
    In the alternative case where  
    \begin{align*}
            &\|\mu - \mu_0\|_1 < \frac{\mathcal{L}_0 ae^{-2a-\tau}}{2} \|g -g_0\|_1 \\
        &\implies \|f - f_0\|_1 < \|g -g_0\|_1 + \frac{\mathcal{L}_0 ae^{-2a-\tau}}{2} \|g -g_0\|_1 \\
        &\implies  \|g - g_0\|_1 \geq \frac{2}{2 + \mathcal{L}_0 ae^{-2a-\tau}} \|f-f_0\|_1,
    \end{align*}
    then
    \begin{align*}
        \mathbb E_f[Z_1 \mathds 1_{\Omega_\tau} ] &\geq \mathbb E_f\left[\mathds 1_{\Omega_\tau} \int_{t_1^1}^{t_1^1 + a} \int_{[0,1]^{d}}   |g(t-t_1^1,s - s_1^1) -g_0(t-t_1^1,s - s_1^1)| - |\mu(t,s) - \mu_0(t,s)| dt ds \right] \\
        &\geq  \mathbb E_f\left[\mathds 1_{\Omega_\tau} \int_{t_1^1}^{t_1^1 + a} \int_{[0,1]^{d}}   |g(t-t_1^1,s - s_1^1) -g_0(t-t_1^1,s - s_1^1)| dt ds \right] - \|\mu - \mu_0\|_1 \mathbb E_f\left[\mathds 1_{\Omega_\tau}\right].
    \end{align*}
    Moreover, 
    \begin{align*}
         &\mathbb E_f\left[\mathds 1_{\Omega_\tau} \int_{t_1^1}^{t_1^1 + a} \int_{[0,1]^{d}}   |g(t-t_1^1,s - s_1^1) -g_0(t-t_1^1,s - s_1^1)| dt ds \right]\\
         &=  \mathbb E_{\mathbb Q}\left[\mathcal{L}_f \mathds 1_{\Omega_\tau} \int_{t_1^1}^{t_1^1 + a} \int_{[0,1]^{d}}   |g(t-t_1^1,s - s_1^1) -g_0(t-t_1^1,s - s_1^1)| dt ds \right] \\
         &\geq \mathcal{L}_0 \mathbb E_{\mathbb Q}\left[\mathds 1_{\Omega_\tau} \int_{0}^{a} \int_{[0,1]^{d}}   |g(u,s - s_1^1) -g_0(u,s - s_1^1)| du ds \right] = \mathcal{L}_0 \mathbb E_{\mathbb Q}\left[\mathds 1_{\Omega_\tau}\right] \|g-g_0\|_1.
    \end{align*}
    We have
    \begin{align*}
        \mathbb E_{\mathbb Q}\left[\mathds 1_{\Omega_\tau}\right] &= \mathbb P_{\mathbb Q}\left[N^1(S_\tau^-) = 0\right] \mathbb P_{\mathbb Q}\left[N^1(S_\tau) = 1\right]\mathbb P_{\mathbb Q}\left[N^1(S_\tau^+) = 0\right] \\
        &= e^{-\tau} \times a e^{-a} \times e^{-a} = a e^{-2a - \tau}.
    \end{align*}
    Thus we obtain
    \begin{align*}
         \mathbb E_f[Z_1 \mathds 1_{\Omega_\tau} ] &\geq  \mathcal{L}_0 a e^{-2a - \tau}\|g-g_0\|_1 - \|\mu-\mu_0\|_1 \\
         &\geq \frac{\mathcal{L}_0 a e^{-2a - \tau}}{2}\|g-g_0\|_1 \\
         &\geq \frac{\mathcal{L}_0 a e^{-2a - \tau}}{2 + \mathcal{L}_0 a e^{-2a - \tau}} \|f-f_0\|_1.
    \end{align*}
    Since $\Omega_0$ and $\Omega_\tau$ are disjoint events, we can conclude that
    \begin{align*}
        \mathbb E_f[Z_1 ] &\geq \mathbb E_f[Z_1 \mathds 1_{\Omega_0} ] + \mathbb E_f[Z_1 \mathds 1_{\Omega_\tau} ] \\
        &\geq \min \left( \frac{\mathcal{L}_0 a e^{-2a - \tau}}{2 + \mathcal{L}_0 a e^{-2a - \tau}}, \frac{\mathcal{L}_0^2ae^{-2a-\tau-1}}{2 + \mathcal{L}_0 ae^{-2a-\tau}}\right) \|f-f_0\|_1 \\
        &= \frac{\mathcal{L}_0 a e^{-2a - \tau}}{2 + \mathcal{L}_0 a e^{-2a - \tau}} \min \left( 1, \mathcal{L}_0 e^{-1}\right) \|f-f_0\|_1 = \frac{\mathcal{L}_0^2ae^{-2a-\tau-1}}{2 + \mathcal{L}_0 ae^{-2a-\tau}} \|f-f_0\|_1,
    \end{align*}
    since $\mathcal{L}_0<1$. Therefore, with
    \begin{align}\label{eq:def-p0}
        p_0 :=\frac{\mathcal{L}_0^2ae^{-2a-\tau-1}}{2 + \mathcal{L}_0 ae^{-2a-\tau}},
    \end{align}
    for any $\tau \in (0,1-2a)$, we obtain the result of Lemma \ref{lem:ef}.
\end{proof}

\bibliographystyle{plainnat}
\bibliography{sample}

\end{document}